\newcommand{\eps}{\varepsilon}
\renewcommand{\P}{\mathbb{P}}
\tikzstyle{dot}=[circle,fill,black,inner sep=1pt]
\tikzset{
  on each segment/.style={
    decorate,
    decoration={
      show path construction,
      moveto code={},
      lineto code={
        \path [#1]
        (\tikzinputsegmentfirst) -- (\tikzinputsegmentlast);
      },
      curveto code={
        \path [#1] (\tikzinputsegmentfirst)
        .. controls
        (\tikzinputsegmentsupporta) and (\tikzinputsegmentsupportb)
        ..
        (\tikzinputsegmentlast);
      },
      closepath code={
        \path [#1]
        (\tikzinputsegmentfirst) -- (\tikzinputsegmentlast);
      },
    },
  },
  mid arrow/.style={postaction={decorate,decoration={
        markings,
        mark=at position .5 with {\arrow[#1]{stealth}}
      }}},
  early arrow/.style={postaction={decorate,decoration={
        markings,
        mark=at position .2 with {\arrow[#1]{stealth}}
      }}},
}
\tikzstyle{int}=[draw, fill=blue!15, minimum size=2em]
\tikzstyle{init} = [pin edge={to-,thin,black}]
\def\alternatecolorred{%
    \pgfkeysalso{red}%
    \global\let\alternatecolor\alternatecolorblue 
}
\def\alternatecolorblue{%
    \pgfkeysalso{blue}%
    \global\let\alternatecolor\alternatecolorred 
}
\newcommand{\altred}{\let\alternatecolor\alternatecolorred 
\tikzset{every edge/.append code = {%
    \global\let\currenttarget\tikztotarget 
    \pgfkeysalso{append after command={(\currenttarget)}}
			\alternatecolor
}}
}
\newcommand{\altblue}{\let\alternatecolor\alternatecolorblue 
\tikzset{every edge/.append code = {%
    \global\let\currenttarget\tikztotarget 
    \pgfkeysalso{append after command={(\currenttarget)}}
			\alternatecolor
}}
}
\tikzstyle{vertexdot}=[circle, draw, fill=black, minimum size=3,inner sep=0pt]
\newtheorem{theorem}{Theorem}
\newtheorem{lemma}{Lemma}[section]
\theoremstyle{definition}
\newtheorem{definition}{Definition}
\newcommand{\Prob}{\mathbb{P}}
\newcommand{\toprob}{\xrightarrow{\Prob}}
\newcommand{\Var}{\mathsf{Var}}
\newcommand{\pth}[1]{\left( #1 \right)}
\newcommand{\qth}[1]{\left[ #1 \right]}
\newcommand{\sth}[1]{\left\{ #1 \right\}}
\newcommand{\iprod}[2]{\left \langle #1, #2 \right\rangle}
\newcommand{\Iprod}[2]{\langle #1, #2 \rangle}
\newcommand{\diag}{\mathsf{diag}}
\newcommand{\sfT}{{\mathsf{T}}}
\newcommand{\calA}{{\mathcal{A}}}
\newcommand{\calB}{{\mathcal{B}}}
\newcommand{\calE}{{\mathcal{E}}}
\newcommand{\calF}{{\mathcal{F}}}
\newcommand{\calI}{{\mathcal{I}}}
\newcommand{\calN}{{\mathcal{N}}}
\newcommand{\calS}{{\mathcal{S}}}
\newcommand{\calT}{{\mathcal{T}}}
\DeclareMathAlphabet{\varmathbb}{U}{bbold}{m}{n}
\newcommand{\EE}{\mathbb{E}}
\renewcommand{\d}{{\rm d}}
\renewcommand{\hat}{\widehat}
\newcommand{\R}{\mathbb{R}}
\newcommand{\C}{\mathbb{C}}
\newcommand{\1}{\mathbbm{1}}
\newcommand{\ii}{\mathrm{i}}
\newcommand{\ER}{Erd\H{o}s-R\'{e}nyi\xspace}
\newcommand{\bfX}{\mathbf{X}}
\newcommand{\bfY}{\mathbf{Y}}
\newcommand{\bfH}{\mathbf{H}}
\newcommand{\bfR}{\mathbf{R}}
\newcommand{\bfU}{\mathbf{U}}
\newcommand{\bfV}{\mathbf{V}}
\newcommand{\tbfX}{\widetilde{\bfX}}
\newcommand{\tbfY}{\widetilde{\bfY}}
\newcommand{\bfE}{\mathbf{E}}
\newcommand{\tbfE}{\widetilde{\mathbf{E}}}
\newcommand{\bfP}{\mathbf{P}}
\newcommand{\bfQ}{\mathbf{Q}}
\newcommand{\Pt}{\widetilde{\bfP}^{[t]}}
\newcommand{\Qt}{\widetilde{\bfQ}^{[t]}}
\newcommand{\Ptt}{\widetilde{\bfP}^{[t+1]}}
\newcommand{\Qtt}{\widetilde{\bfQ}^{[t+1]}}
\newcommand{\frakq}{\mathfrak{q}}
\newcommand{\frakp}{\mathfrak{p}}
\newcommand{\frakw}{\mathfrak{w}}
\newcommand{\frakf}{\mathfrak{f}}
\newcommand{\frakg}{\mathfrak{g}}
\newcommand{\bfI}{\mathbf{I}}
\newcommand{\bfG}{\mathbf{G}}
\newcommand{\hatX}{\hat{\bfX}}
\newcommand{\hatXk}{\hat{\bfX}^{[k]}}
\newcommand{\hatu}{\hat{\u}}
\newcommand{\hatv}{\hat{\v}}
\newcommand{\hatuk}{\hat{\u}^{[k]}}
\newcommand{\hatvk}{\hat{\v}^{[k]}}
\newcommand{\hatpsi}{\hat{\psi}}
\renewcommand{\u}{\mathbf{u}}
\renewcommand{\v}{\mathbf{v}}
\newcommand{\w}{\mathbf{w}}
\newcommand{\Xk}{\bfX^{[k]}}
\newcommand{\Yk}{\bfY^{[k]}}
\newcommand{\Hk}{\bfH^{[k]}}
\newcommand{\Rk}{\bfR^{[k]}}
\newcommand{\Rt}{\bfR^{[t]}}
\newcommand{\Rtt}{\bfR^{[t+1]}}
\newcommand{\Xt}{\bfX^{[t]}}
\newcommand{\tXt}{\tbfX^{[t]}}
\newcommand{\tXtt}{\tbfX^{[t+1]}}
\newcommand{\Dk}{\Delta^{[k]}}
\newcommand{\tXk}{\tbfX^{[k]}}
\newcommand{\tYk}{\tbfY^{[k]}}
\newcommand{\Lk}{\lambda^{[k]}}
\newcommand{\Sigk}{\sigma^{[k]}}
\newcommand{\Muk}{\mu^{[k]}}
\newcommand{\uk}{\u^{[k]}}
\newcommand{\vk}{\v^{[k]}}
\newcommand{\wk}{\w^{[k]}}
\newcommand{\f}{\mathbf{f}}
\newcommand{\g}{\mathbf{g}}
\newcommand{\h}{\mathbf{h}}
\newcommand{\fk}{\mathbf{f}^{[k]}}
\newcommand{\gk}{\mathbf{g}^{[k]}}
\newcommand{\hk}{\mathbf{h}^{[k]}}
\renewcommand{\a}{\mathbf{a}}
\renewcommand{\b}{\mathbf{b}}
\newcommand{\Pis}{\mathbf{\Pi}_{\mathrm{s}}}
\newcommand{\Pif}{\mathbf{\Pi}_{\mathrm{f}}}
\newcommand{\MP}{\mathsf{MP}}
\newcommand{\im}{\text{\rm Im}{\,}}
\newcommand{\abs}[1]{\left| {#1} \right|}
\begin{document}

\pgfdeclarelayer{background}
\pgfdeclarelayer{foreground}
\pgfsetlayers{background,main,foreground}

\title{
Resampling Sensitivity of High-Dimensional PCA
}

\author{
Haoyu Wang \thanks{Department of Mathematics, Yale University, New Haven, CT 06511, USA,  \texttt{haoyu.wang@yale.edu}}
}

\date{\today}

\maketitle

\begin{abstract}

The study of stability and sensitivity of statistical methods or algorithms with respect to their data is an important problem in machine learning and statistics. The performance of the algorithm under resampling of the data is a fundamental way to measure its stability and is closely related to generalization or privacy of the algorithm.
In this paper, we study the resampling sensitivity for the principal component analysis (PCA). Given an $ n \times p $ random matrix $ \bfX $, let $ \Xk $ be the matrix obtained from $ \bfX $ by resampling $ k $ randomly chosen entries of $ \bfX $. Let $ \v $ and $ \vk $ denote the principal components of $ \bfX $ and $ \Xk $.
In the proportional growth regime $ p/n \to \xi \in (0,1] $, we establish the sharp threshold for the sensitivity/stability transition of PCA. When $ k \gg n^{5/3} $, the principal components $ \v $ and $ \vk $ are asymptotically orthogonal. On the other hand, when $ k \ll n^{5/3} $, the principal components $ \v $ and $ \vk $ are asymptotically colinear. In words, we show that PCA is sensitive to the input data in the sense that resampling even a negligible portion of the input may completely change the output.

\end{abstract}










\newpage

\tableofcontents

\newpage

\section{Introduction}

The study of stability and sensitivity of statistical methods and algorithms with respect to the input data is an important task in machine learning and statistics \cite{bousquet2002stability,elisseeff2005stability,mukherjee2006learning,hardt2016train,deng2021toward}. The notion of stability for algorithms is also closely related to differential privacy \cite{dwork2014algorithmic} and generalization error \cite{kutin2002almost}. To measure algorithmic stability, one fundamental question is to study the performance of the algorithm under resampling of its input data  \cite{barber2021predictive,kim2021black}.
Originating from the analysis of Boolean functions, resampling sensitivity (also called noise sensitivity) is an important concept in theoretical computer science, which refers to the phenomenon that resampling a small portion of the random input data may lead to decorrelation of the output. Such a remarkable phenomenon was first studied in the pioneering work of Benjamini, Kalai and Schramm \cite{Schramm}, and we refer to the monograph \cite{garban2014noise} for a systematic discussion on this topic.




In this work, we study the resampling sensitivity of principal component analysis (PCA). As one of the most commonly used statistical methods, PCA is widely applied for dimension reduction, feature extraction, etc \cite{johnstone2007icm,dieng2011application}. It is also used in other fields such as economics \cite{vyas2006constructing}, finance \cite{ait2017using}, genetics \cite{ringner2008principal}, and so on. The performance of PCA under the additive or multiplicative independent perturbation of the data matrix has been well studied (see e.g. \cite{baik2005phase,baik2006eigenvalues,paul2007asymptotics,benaych2011eigenvalues,candes2011robust,fan2018eigenvector}). 

However, how resampling of the data matrix affects the outcome remains unclear. In this paper, we address this problem for the first time. Here, we emphasize that the resampling of the input data may not have any structure, and the specific resampling procedure is given in the next subsection. In our main results, we show that PCA is resampling sensitive, in the sense that, above certain threshold, resampling even a negligible portion of the data may make the resulted principal component completely change (i.e. become orthogonal to the original direction).

Compared with previous work that mainly focused on PCA with additive or multiplicative independent noise, our setting is very different. In our model, if writing the resampling effect as an additive or multiplicative perturbation, then this noise is not independent of the signal and does not possess any special structure. In contrast, in previous work, sometimes low-rank assumptions on the structure of the matrix or the noise, or some kind of incoherence conditions were imposed. In our work, we have almost no assumption on the data other than a sub-exponential decay condition. Moreover, we highlight that our results have universality. In particular, we do not need to know the specific distribution of the data and we do not require the data is i.i.d sampled.





\subsection{Model and main results}

Let $ \bfX=(\bfX_{ij}) $ be an $ n \times p $ data matrix with independent real valued entries with mean 0 and variance $ p^{-1} $,
\begin{equation}\label{e.Assumption1}
\bfX_{ij} = p^{-1/2}x_{ij},\ \ \ \EE [x_{ij}]=0,\ \ \ \EE[ x_{ij}^2 ]=1.
\end{equation}
Note that we do not require the i.i.d. condition for the data. Furthermore, we assume the entries $ x_{ij} $ have a sub-exponential decay, that is, there exists a constant $ \theta>0 $ such that for $ u>1 $,
\begin{equation}\label{e.Assumption2}
\P (|x_{ij}| > u) \leq \theta^{-1} \exp (-u^\theta).
\end{equation}
This sub-exponential decay assumption is mainly for convenience, and other conditions such as the finiteness of a sufficiently high moment would be enough.

Motivated by high-dimensional statistics, we will work in the proportional growth regime $ n \asymp p $. Throughout this paper, to avoid trivial eigenvalues, we will be working in the regime
$$ 
\lim_{n \to \infty} p/n = \xi \in (0,1)\ \ \mbox{or}\ \ p/n \equiv 1.
$$
In the case $ \lim p/n=1 $, our assymption $ p/n \equiv 1 $ is due to technical reasons in random matrix theory. Specifically, the proof relies on the delocalization of eigenvectors in the whole spectrum. As one of the major open problems in random matrix theory, delocalization of eigenvectors near the lower spectral edge is not known in the general case with just $ \lim p/n=1 $. The strictly square assumption $ p \equiv n $ can be slightly relaxed to $ |n-p| = p^{o(1)} $ (see e.g. \cite{wang2022optimal}), but we do not pursue such an extension for simplicity.

The sample covariance matrix corresponding to data matrix $ \bfX $ is defined by $ \bfH := \bfX^\top \bfX $. We order the eigenvalues of $ \bfH $ as $ \lambda_1 \geq \cdots \geq \lambda_p $, and use $ \v_i \in \R^p $ to denote the unit eigenvector corresponding to the eigenvalue $ \lambda_i $. If the context is clear, we just use $ \lambda:=\lambda_1 $ and $ \v:=\v_1 $ to denote the largest eigenvalue and the top eigenvector. 
We also consider the eigenvalues and eigenvectors of the Gram matrix $ \widehat{\bfH}:=\bfX \bfX^\top $. Note that $ \widehat{\bfH} $ and $ \bfH $ have the same non-trivial eigenvalues, and the spectrum of $ \widehat{\bfH} $ is given by $ \{\lambda_i\}_{i=1}^n $ with $ \lambda_{p+1}=\cdots=\lambda_n=0 $. We denote the unit eigenvectors of $ \widehat{\bfH} $ associated with the eigenvalue $ \lambda_i $ by $ \u_i \in \R^n $.

Let $ \bfU=[\u_1,\cdots,\u_p] \in \R^{n \times p} $ and $ \bfV=[\v_1,\cdots,\v_p] \in \R^{p \times p} $. These eigenvectors may be connected by the singular value decomposition of the data matrix $ \bfX=\bfU \mathbf{\Sigma} \bfV^\top $, where $ \mathbf{\Sigma} := \diag(\sigma_1,\cdots,\sigma_p) $ with $ \sigma_i=\sqrt{\lambda_i} $ corresponds to the singular values. For convenience, we also denote $ \sigma := \sigma_1 $.
And therefore, up to the sign of the eigenvectors, we have
$$ \bfX \v_\alpha = \sqrt{\lambda_\alpha} \u_\alpha,\ \ \bfX^\top \u_\alpha = \sqrt{\lambda_\alpha} \v_\alpha.   $$


We now describe the resampling procedure. For a positive number $ k \leq np $, define the random matrix $ \Xk $ in the following way. Let $ S_k=\{(i_1,\alpha_1),\cdots,(i_k,\alpha_k)\} $ be a set of $ k $ pairs chosen uniformly at random without raplacement from the set of all ordered pairs $ (i,\alpha) $ of indices with $ 1 \leq i \leq n $ and $ 1 \leq \alpha \leq p $. We assume that the set $ S_k $ is independent of the entries of $ \bfX $. The entries of $ \Xk $ are given by
\begin{equation*}
\Xk_{i,\alpha}=
\left\{
\begin{aligned}
& \bfX_{i,\alpha}' & \quad & \mbox{if } (i,\alpha) \in S_k,\\
& \bfX_{i,\alpha} & \quad & \mbox{otherwise},
\end{aligned}
\right.
\end{equation*}
where $ (\bfX_{i,\alpha}')_{1 \leq i \leq n,1 \leq \alpha \leq p} $ are independent random variables, independent of $ \bfX $, and $ \bfX_{i,\alpha}' $ has the same distribution as $ \bfX_{i,\alpha} $. In other words, $ \Xk $ is obtained from $ \bfX $ by resampling $ k $ random entries of the matrix, and therefore $ \Xk $ clearly has the same distribution as $ \bfX $. Let $ \Hk:=(\Xk)^\top \Xk $ the sample covariance matrix corresponding to the resampled data matrix $ \Xk $. Denote the eigenvalues and the corresponding normalized eigenvectors of $ \Hk $ by $ \Lk_1 \geq \cdots \geq \Lk_p  $ and $ \vk_1,\cdots,\vk_p $. When the context is clear, the principal component is just denoted by $ \Lk $ and $ \vk $. Similarly, denote the eigenvector of the matrix $ \widehat{\bfH}^{[k]}:=\Xk (\Xk)^\top $ associated with the eigenvalue $ \Lk_i $ by $ \uk_i $.

With the resampling parameter in two different regimes, we have the following results.

\begin{theorem}[Noise sensitivity under excessive resampling]\label{thm:main1}
Let $ \bfX $ be a random data matrix satisfying \eqref{e.Assumption1} and \eqref{e.Assumption2} and $ \Xk $ be the resampled matrix defined as above. If $ k \gg n^{5/3} $, then the associated principal components are asymptotically orthogonal, i.e.
\begin{equation}\label{e.main1}
\lim_{n \to \infty} \EE \left| \langle \v,\vk \rangle \right| =0,\ \ \mbox{and}\ \ \lim_{n \to \infty} \EE \left| \langle \u,\uk \rangle \right| =0.
\end{equation}
\end{theorem}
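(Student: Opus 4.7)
The plan is to establish the stronger second-moment bound $\EE \langle \v, \vk\rangle^2 = o(1)$, from which $\EE|\langle \v,\vk\rangle| = o(1)$ follows by Jensen. The corresponding bound for $\u$ is obtained symmetrically (by exchanging the roles of rows and columns of $\bfX$, or working throughout with $\tbfX$ and its eigenvector $\w = (\u^\top,\v^\top)^\top$). To produce the $L^2$ bound I would embed the discrete resampling in a continuous-time Markov chain: each entry of $\bfX$ is independently refreshed at rate one, yielding a stationary trajectory $\bfX(\tau)$ with $\bfX(0) = \bfX$. Matching $k$ distinct refreshed entries (in expectation) gives $\tau \asymp k/(np) \asymp k/n^2$, so the task reduces to showing $Q(\tau) := \EE \langle \v(0), \v(\tau)\rangle^2 = o(1)$ whenever $\tau \gg n^{-1/3}$.

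The heart of the argument is a per-event generator estimate. Refreshing the entry $(i,\alpha)$ perturbs $\bfH = \bfX^\top \bfX$ by $\delta(e_\alpha r_i^\top + r_i e_\alpha^\top) + \delta^2 e_\alpha e_\alpha^\top$, where $r_i$ is the $i$-th row of $\bfX$, $e_\alpha$ the $\alpha$-th standard basis vector of $\R^p$, and $\delta = \bfX'_{i\alpha} - \bfX_{i\alpha}$ satisfies $\EE\delta^2 = 2/p$. First-order perturbation theory combined with the singular-value identities $r_i^\top \v_j = \sigma_j \u_j(i)$ gives
\begin{equation*}
\langle \v_j, \Delta \v\rangle = \frac{\delta}{\lambda - \lambda_j}\bigl[\sigma \v_j(\alpha)\u(i) + \sigma_j \u_j(i)\v(\alpha)\bigr] + O(\delta^2).
\end{equation*}
Squaring and averaging over the uniform choice of $(i,\alpha)$ and over $\delta$, the cross term vanishes by the orthogonality relations $\langle \u, \u_j\rangle = \langle \v, \v_j\rangle = 0$, leaving a per-resample variance of $2(\lambda+\lambda_j)/[np^2(\lambda-\lambda_j)^2]$ along $\v_j$. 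Using the soft-edge spacing $\lambda - \lambda_j \asymp (j/n)^{2/3}$ for small $j$ and the convergence of $\sum_j j^{-4/3}$, one has $\sum_{j\geq 2}(\lambda-\lambda_j)^{-2} = \Theta(n^{4/3})$, so $\EE \|\Delta \v\|^2 = \Theta(n^{-5/3})$ per resampling event. Since the chain has rate $np \sim n^2$ events per unit time and the law of $\bfX(\tau)$ is stationary (so the RMT inputs apply uniformly in $\tau$), this infinitesimal estimate integrates, via a Gr\"onwall-type argument on the projector $\Pi(\tau) = \v(\tau)\v(\tau)^\top$, to $Q(\tau) \leq \exp(-c n^{1/3}\tau) + o(1)$. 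Substituting $\tau \asymp k/n^2$ yields $Q(\tau) \leq \exp(-ck/n^{5/3}) + o(1) = o(1)$ whenever $k \gg n^{5/3}$, completing the proof.

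The principal obstacles lie in the rigorous control of the error terms: (i) the $O(\delta^2)$ corrections in perturbation theory and the diagonal contribution $\delta^2 e_\alpha e_\alpha^\top$, which must be shown to contribute $o(n^{-5/3})$ per resample via second-order perturbation together with isotropic local laws as in \cite{fan2018eigenvector}; (ii) the intermediate regime in the sum over $j$, where the gap $\lambda - \lambda_j$ is neither of edge scaling $(j/n)^{2/3}$ nor of order $1$, handled by a dyadic decomposition together with eigenvalue rigidity; and (iii) the uniform delocalization $\|\v_j\|_\infty + \|\u_j\|_\infty = O(n^{-1/2+o(1)})$ across the entire spectrum, needed to extract the cancellation of the cross term and to propagate the estimate --- this is precisely the input that dictates the restriction to $p/n \to \xi \in (0,1)$ or $p/n \equiv 1$. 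With these RMT inputs in hand, the Markov-chain structure propagates the one-step estimate cleanly to the integrated bound on $Q(\tau)$.
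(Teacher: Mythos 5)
Your approach is genuinely different from the paper's, which invokes the Bordenave--Lubetzky discrete variance formula (Lemma~\ref{l.VarianceBound}) rather than a continuous-time refresh chain. The per-step calculation is sound as far as it goes: the first-order perturbation expansion and the averaging over $(i,\alpha)$ and $\delta$ do give $\EE\|\Delta\v\|^2 \asymp n^{-5/3}$ per resampling event, and the time-change $\tau \asymp k/(np)$ correctly turns the threshold $k\asymp n^{5/3}$ into $\tau \asymp n^{-1/3}$.

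The gap is the ``Gr\"onwall-type argument on the projector,'' which is asserted rather than derived and does not follow from your one-step estimate. To obtain $\dot{Q}(\tau)\lesssim -n^{1/3}Q(\tau)$ you need the drift of $\langle\v(0),\v(\tau)\rangle$ to be proportional to $Q(\tau)$ at \emph{every} $\tau$, not just at $\tau=0$. That drift is $\EE[\langle\v(0),\Delta\v\rangle]$, and $\Delta\v$ expands in the eigenbasis $\{\v_j(\tau)\}$ of the \emph{current} matrix $\bfH(\tau)$ with weights sharply concentrated near the spectral edge. The drift therefore depends on how $\v(0)$ decomposes in that moving basis, a quantity not controlled by the scalar $Q(\tau)$ alone: once $\v(0)$ has acquired a nontrivial component in the edge eigenspace of $\bfH(\tau)$, the anisotropic resampling noise may push $\v(\tau)$ back towards $\v(0)$ as readily as away from it. Your estimate $\EE\|\Delta\v\|^2 \asymp n^{-5/3}$ pins down the drift only at $\tau=0$, through the norm-preservation term $-\tfrac12\|\Delta\v\|^2\v$. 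The variance formula sidesteps all of this: Lemma~\ref{l.VarianceBound}, applied to $\sigma=\sqrt{\lambda}$ together with the variational characterization of singular values, directly yields $\frac{1}{np^2}\EE[\langle\u,\uk\rangle\langle\v,\vk\rangle]\lesssim \Var(\sigma)/k + o(n^{-3})$, so $\Var(\sigma)\lesssim n^{-4/3}$ and $k\gg n^{5/3}$ force decorrelation with no dynamics to integrate (the two inner products are then separated by repeating the argument on the symmetrization $\tbfX$). A further unaddressed point is the rare small-gap event $\{\lambda_1-\lambda_2<n^{-1-c}\}$, on which first-order perturbation theory breaks down outright; the paper spends Lemma~\ref{lem:Single} together with Lemma~\ref{lem:Gap} controlling it, and your sketch would need an analogous conditioning.
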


\begin{theorem}[Noise sensitivity under moderate resampling]\label{thm:main2}
Let $ \bfX $ be a random data matrix satisfying \eqref{e.Assumption1} and \eqref{e.Assumption2} and $ \Xk $ be the resampled matrix defined as above. For any $ \epsilon_0>0 $,
\begin{equation}\label{e.main2}
\max_{1 \leq k \leq  n^{5/3-\epsilon_0}} \min_{s \in \{-1,1\}} \sqrt{n} \|\v - s\vk \|_\infty \toprob 0,
\end{equation}
where $ \toprob $ means convergence in probability.
In particular, this implies
\begin{equation*}
\lim_{n \to \infty} \EE \qth{ \max_{1 \leq k \leq n^{5/3 - \epsilon_0}} \min_{s \in \{-1,1\}} \|\v - s\vk \|_2 } = 0.
\end{equation*}
The same result also holds for $ \u $ and $ \uk $.
\end{theorem}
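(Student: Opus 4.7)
The plan is to reduce to the symmetrized matrix $\tbfX$ from \eqref{eq:Symmetrization}, use first-order eigenvector perturbation theory, and show that, because the resampling noise is mean-zero and (by construction) independent of $\bfX$, the variance of the resulting coordinate perturbation is governed by an edge-sum that scales exactly like $k/n^{5/3}$. Writing $\bfE = \tbfX^{[k]} - \tbfX = p^{-1/2} \sum_{m=1}^{k} \xi_m (e_{i_m} e_{n+\alpha_m}^\top + e_{n+\alpha_m} e_{i_m}^\top)$ with $\xi_m := x'_{i_m \alpha_m} - x_{i_m \alpha_m}$ mean-zero and independent of $\bfX$, the first-order expansion of the top eigenvector $\w$ of $\tbfX$ reads
\begin{equation*}
(\wk - s \w)[i] \approx p^{-1/2} \sum_{m=1}^{k} \xi_m \sum_{j \neq 1} \frac{\w_j[i_m]\, \w_1[n+\alpha_m] + \w_j[n+\alpha_m]\, \w_1[i_m]}{\sigma_1 - \sigma_j}\, \w_j[i],
\end{equation*}
where $s \in \{-1,1\}$ is the sign chosen to align $\wk$ with $\w$.

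First I would collect the three random matrix inputs that make the whole argument quantitative: (i) complete eigenvector delocalization for $\tbfX$, i.e.\ $\max_j \|\w_j\|_\infty \lesssim n^{-1/2 + o(1)}$ with overwhelming probability, a result that (as noted in the excerpt around the $p/n \equiv 1$ assumption) is available throughout the entire spectrum; (ii) top singular-value rigidity giving the edge spacing $\sigma_1 - \sigma_j \gtrsim (j-1)^{2/3} n^{-2/3}$ with overwhelming probability for all $j \geq 2$; and (iii) the exact independence of the resampling randomness $(\xi_m, i_m, \alpha_m)_{m\leq k}$ from $\bfX$, which permits conditioning on $\bfX$ and treating $\xi_m$ as centered random variables with independent, nearly-uniform supports.

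Conditioning on $\bfX$, the cross terms in the second moment of $(\wk - s\w)[i]$ vanish by $\E[\xi_m \xi_{m'}] = 0$ for $m \ne m'$, so each of the $k$ summands contributes independently. Plugging in (i) and (ii) and computing the resulting double sum yields, after carrying out the elementary edge-sum $\sum_{j\geq 2} (\sigma_1 - \sigma_j)^{-2} \lesssim n^{2/3} \sum_{j\geq 2} j^{-4/3} \cdot n^{2/3} \asymp n^{4/3}$ and including the $n^{-1}$ from delocalization at coordinate $i$,
\begin{equation*}
\E\bigl[|(\wk - s\w)[i]|^2 \bigm| \bfX\bigr] \;\lesssim\; \frac{k}{n p}\cdot \frac{1}{n}\cdot \sum_{j \ne 1} \frac{1}{(\sigma_1 - \sigma_j)^2} \;\lesssim\; \frac{k}{n^{8/3}}\, n^{o(1)},
\end{equation*}
so that $\sqrt{n}\,|(\wk - s\w)[i]| \lesssim \sqrt{k/n^{5/3}}\cdot n^{o(1)}$, which is $o(1)$ once $k \leq n^{5/3 - \eps}$. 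A concentration inequality (e.g.\ Hanson--Wright applied to this bilinear form in $\xi$, or a Chernoff bound after subgaussian truncation of the $\xi_m$ using \eqref{e.Assumption2}) then upgrades the $L^2$ bound to a high-probability pointwise bound; union bounds over the $\leq 2(n+p)$ coordinates $i$ and the polynomially many values of $k \in \{1,\ldots,n^{5/3-\eps}\}$ conclude the estimate.

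The main obstacle is threefold. First, the higher-order terms in the perturbation expansion must be controlled; a clean way is to write the expansion as a Neumann series in the regularized resolvent $G(\sigma_1) := \sum_{j\neq 1}(\sigma_1 - \sigma_j)^{-1}\w_j \w_j^\top$ and show that $\|G \bfE\|_{\mathrm{op}}$ is small with high probability in the regime $k \ll n^{5/3}$. Second, eigenvector delocalization and edge rigidity on the whole spectrum are available off the shelf for $\xi \in (0,1)$ but are delicate at the hard edge, which is precisely why the excerpt restricts to $p/n \equiv 1$ in the critical case. Third, since the $\w_j$ are themselves functions of $\bfX$, one must be careful that the "independence of $\xi_m$ from $\bfX$" is used only to take expectation over $\xi_m$ conditional on $\bfX$ (this is clean by construction), and not to pretend that the pairs $\{(i_m, \alpha_m), \w_j\}$ are jointly independent; the second-moment calculation above only needs the former, so no further decoupling is required.
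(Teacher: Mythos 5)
Your proposal takes a genuinely different route from the paper, but it has several real gaps that the paper's resolvent-based argument is specifically designed to avoid.

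\textbf{First, the independence claim is wrong.} You write $\xi_m := x'_{i_m\alpha_m} - x_{i_m\alpha_m}$ and assert it is ``mean-zero and (by construction) independent of $\bfX$.'' The first summand $x'_{i_m\alpha_m}$ is indeed independent of $\bfX$, but the second, $x_{i_m\alpha_m}$, is an entry \emph{of} $\bfX$. Hence conditionally on $\bfX$ and $S_k$, $\EE[\xi_m \mid \bfX, S_k] = -x_{i_m\alpha_m} \neq 0$, and $\EE[\xi_m \xi_{m'} \mid \bfX, S_k] = x_{i_m\alpha_m} x_{i_{m'}\alpha_{m'}} \neq 0$ for $m \neq m'$. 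Your claimed vanishing of cross terms in $m$ therefore fails at exactly the point you invoke it. One would have to split $\xi_m$ into its $x'$-part (clean) and its $-x$-part (entangled with the eigenvectors $\w_j$), and the latter is precisely where the paper's Lindeberg exchange / martingale argument does real work. In the paper, the forward and backward filtrations $\calF_t$ and $\calF_t'$ are designed so that at each step exactly one of $\bfX_{i_t\alpha_t}$ or $\bfX'_{i_t\alpha_t}$ is fresh; this handles the dependence that your conditioning-on-$\bfX$ calculation misses.

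\textbf{Second, the perturbation series is not obviously under control.} You flag this as an ``obstacle'' but offer only ``show that $\|G\bfE\|_{\mathrm{op}}$ is small.'' That bound does not hold: the regularized resolvent has $\|G(\sigma_1)\|_{\mathrm{op}} = (\sigma_1 - \sigma_2)^{-1} \sim n^{2/3}$, while $\|\bfE\|_{\mathrm{op}}$ is at best $n^{-1/6+o(1)}$ when $k \sim n^{5/3}$, so $\|G\bfE\|_{\mathrm{op}}$ can be as large as $n^{1/2}$. The actual smallness in this problem is not an operator-norm smallness but a smallness of the specific vector $G\bfE\w_1$, whose squared norm is $\asymp k/n^{5/3}$ (borderline $O(1)$ at the threshold). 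Making a Neumann-series argument rigorous in this regime would require tracking how each iterate aligns with the good subspace, which is essentially as hard as the original problem. The paper's device --- move to the resolvent $\bfR(z)$ at $z = E + \mathrm{i}\eta$ with $\eta = n^{-2/3-\delta}$, where the resolvent is uniformly bounded in operator norm by $\eta^{-1}$ and the local law gives off-diagonal control of size $n^{-1/3+o(1)}$ --- sidesteps the small-gap problem entirely. Resolvent stability (Lemma~\ref{lem:Stability_Resolvent}), then eigenvalue stability (Lemma~\ref{lem:Stability_Eigenvalue}), then the resolvent-to-eigenvector-product approximation replace perturbation theory on the eigenvector.

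\textbf{Third, the $j$-cross-terms.} Your second-moment bound treats the squared inner sum $\bigl|\sum_{j\neq 1}(\sigma_1-\sigma_j)^{-1} c_j^{(m)} \w_j[i]\bigr|^2$ as if it were diagonal, i.e.\ $\sum_{j}(\sigma_1-\sigma_j)^{-2} |c_j^{(m)} \w_j[i]|^2$; this is how $\sum_j (\sigma_1-\sigma_j)^{-2}$ appears in your estimate. That identification is justified only after also averaging over the random positions $(i_m, \alpha_m)$ (so that $\EE[\w_j[i_m]\w_{j'}[i_m]] \approx \tfrac{1}{n}\delta_{jj'}$), but your Hanson--Wright upgrade conditions on $S_k$, losing that averaging. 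The correct way to control the inner $j$-sum pointwise in $S_k$ is to recognize it as a (regularized) resolvent entry and invoke the local law, which gives $|\sum_{j\neq 1}(\sigma_1-\sigma_j)^{-1} \w_j[a]\w_j[b]| \lesssim n^{-1/3+o(1)}$. That happens to reproduce your scaling, but only after supplying the resolvent ingredient --- which is the very tool the paper builds the whole argument around.

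In short: the target scaling $k/n^{8/3}$ in your $L^2$ calculation is the right one, and the heuristic picture is sound, but the three justifications above are missing and do not look like they can be patched without essentially reconstructing the paper's resolvent machinery.
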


These two theorems together state that the critical threshold for the resampling strength is of order $ k \asymp n^{5/3} $. Note that $ n^{5/3} $ compared with the total number of inputs $ np \asymp n^2 $ is negligible. We show that, above the threshold $ n^{5/3} $, resampling even a negligible portion of the data will result in a dramatic change of the resulting principal component, in the sense that the new principal component is asymptotically orthogonal to the old one; while below the threshold, a relatively mild resampling has almost no effect on the corresponding new principal component. If considering the eigenvector overlaps $ |\Iprod{\v}{\vk}| $ and $ |\Iprod{\u}{\uk}| $, these quantities exhibit sharp phase transitions from $ 1 $ to $ 0 $ near the critical threshold $ k \asymp n^{5/3} $.

We remark that the phase transition stated in the above theorems is not restricted to the top eigenvectors $ \v,\vk,\u,\uk $. With essentially the same arguments, we can prove that for any fixed $ m>0 $, the $ m $-th leading eigenvectors $ \v_m,\vk_m $ and $ \u_m,\uk_m $ exhibit the same phase transition at the critical threshold of the same order $ n^{5/3} $.








\subsection{High-Level Proof Scheme}

The high-level idea is the ``superconcentration implies chaos" phenomenon established by Chatterjee \cite{chatterjee2014superconcentration}, which means that small perturbation (beyond certain threshold) of super-concentrated system ( in the sense that it is characterized by some quantity with small variance) will lead to a dramatic change of the system. For random matrix models, the super-concentrated quantity is usually the eigenvalues. Resampling sensitivity for random matrices was first studied for Wigner matrices and sparse \ER graphs in \cite{bordenave2020noise,bordenave2022noise}. 
For the PCA model, the key difference is that the entries of the sample covariance matrix are correlated. Moreover, resampling a single entry of the data will change $ \Theta(n) $ entries in the sample covariance matrix. These two differences will make the proofs more technical and a linearization trick would be important to reduce the interdependency of the matrix entries. In addition, we also need a technical variance formula related to resampling to compute the variance of the top eigenvalue and tools from random matrix theory, in particular the local Marchenko-Pastur law for the resolvent and the delocalization of eigenvectors.

For the sensitivity of the principal components. We show that the inner products $ \Iprod{\v}{\vk} $ and $ \Iprod{\u}{\uk} $ are closely related to the variance of the top eigenvalue of the sample covariance matrix. Using the variance formula for resampling, we have a precise characterization between the inner product of the top eigenvectors and the concentration of the top eigenvalue. The perturbation effect of the resampling is studied via the variational representation of the eigenvalue.

For the stability under moderate resampling, the key idea of our proof is to study the stability of the resolvent. On the one hand, the stability of resolvent implies the stability of the top eigenvalue. On the other hand, the resolvent can be used to approximate certain useful statistics of the top eigenvector. The stability of the resolvent is proved via a Lindeberg exchange strategy. The resampling procedure can be decomposed into a martingale, and the difference between the resolvents can be therefore bounded by martingale concentration inequalities combined with the local Marchenko-Pastur law of the resolvent for a priori estimates.

\section{Sensitivity under Excessive Resampling}

We provide a heuristic argument for deriving the threshold for the sensitivity regime. We consider the derivative of the top eigenvalue as a function of the matrix entries. Then we have the approximation
$$ \lambda^{[1]} - \lambda \approx \v^\top \qth{(\bfX^{[1]})^\top \bfX^{[1]} - \bfX^\top \bfX} \v $$
Note that the matrix in the parenthesis has only $ \Theta(p) $ non-zero entries, and each entry is roughly of size $ O(p^{-1}) $. Also, the eigenvector $ \v $ is delocalized in the sense that $ \v(m) \approx p^{-1/2} $ for all $ m=1,\dots,p $. A central limit theorem yields that approximately we have
$$ \lambda^{[1]} - \lambda \approx \sqrt{p}p^{-1}p^{-1} = p^{-3/2}. $$
By this heuristic argument and central limit theorem, we have
$$ \Lk - \lambda \approx \sqrt{k} p^{-3/2}. $$
Note that from random matrix theory, we know that $ \lambda_1-\lambda_2 $ is of order $ p^{-2/3} $. Therefore, if we have $ \sqrt{k} p^{-3/2} \ll p^{-2/3} $ (this corresponds to $ k \ll n^{5/3} $), then the difference the two top eigenvalues $ \lambda $ and $ \Lk $ is much smaller than the first two eigenvalues $ \lambda_1 $ and $ \lambda_2 $ of the matrix $ \bfX^\top \bfX $. This implies that the perturbation effect on $ \Xk $ is small, and therefore in this case it is plausible to believe that $ \vk $ is just a small perturbation of $ \v $. Thus, for the sensitivity regime, we must have $ k \gg n^{5/3} $.

Our proof is essentially trying to make the above heuristics rigorous. To do this, a key observation is that the inner products $ \Iprod{\v}{\vk} $ and $ \Iprod{\u}{\uk} $ can be related to the variance of the leading eigenvalue.

\subsection{Connection with Variance of Top Eigenvalue}
As mentioned above, the key step in the proof for sensitivity regime is to establish a connection between the inner products of top eigenvalues with the variance of the top eigenvalue. Specifically, we will prove
$$ \EE \qth{ |\Iprod{\v}{\vk}|^2 } \leq C\frac{n^3 \Var(\sigma)}{k} + o(1), $$
where $ C>0 $ is some universal constant and $ \sigma=\sqrt{\lambda} $ is the leading singular value.
A similar result is also true for $ \u $ and $ \uk $.
For more details, we refer to Section \ref{sec:Proof_Sensitivity}.

From random matrix theory, we have $ \Var(\sigma)=O(n^{-4/3}) $. Then, based on this inequality, we derive the threshold $ k \gg n^{5/3} $ for the sensitivity regime.

\section{Stability under Moderate Resampling}

To establish the stability of PCA when the resampling strength is mild, we will utilize tools from random matrix theory and specifically the proof relies on the analysis of the resolvent matrix. Also, to simplify the Gram matrix structure of the sample covariance matrix, when considering the resolvent we use a linearization trick. For any $ z \in \C $ with $ \im z>0 $, the resolvent is defined as
\begin{equation*}
\bfR(z) := \left(
\begin{matrix}
-\bfI_{n} & \bfX\\
\bfX^\top & -z \bfI_{p}
\end{matrix}
\right)^{-1}.    
\end{equation*}
Similarly, we denote the resolvent of $ \Xk $ by $ \Rk(z) $. The key idea for the proofs in the stability regime is that eigenvectors can be approximated by resolvents and the resolvents are stable under moderate resampling.

\subsection{Resolvent Approximation}
To illustrate the usefulness of the resolvent, we show that the entries of the resolvent can be used to approximate the product of entries in the eigenvector. For some small $ \delta>0 $, let $ z_0=\lambda+\ii \eta $ with $ \eta=n^{-2/3-\delta} $. In the regime $ k \leq n^{5/3-\epsilon_0} $ for some $ \epsilon_0>0 $, there exists some small $ c>0 $ such that for all $ \alpha,\beta=1,\dots,p $, we have
$$  \abs{ \eta \im \bfR_{n+\alpha, n+\beta}(z_0) - \v(\alpha) \v(\beta) } \leq n^{-1-c}, $$
and
$$ \abs{ \eta \im \Rk_{n+\alpha, n+\beta}(z_0) - \vk(\alpha) \vk(\beta) } \leq n^{-1-c}. $$
A similar result also holds for $ \u $ and $ \uk $. For more details, we refer to Lemma \ref{lem:Resolvent_Approx}.

\subsection{Stability of the Resolvent}
Since the eigenvector can be approximated by the resolvent, it suffices to show the stability of the resolvent. Consider the regime $ k \leq n^{5/3-\epsilon_0} $ for some $ \epsilon_0>0 $. For some small $ \delta>0 $ and all $ z=E+\ii\eta $ that is close to the upper spectral edge and $ \eta=n^{-2/3-\delta} $, there exists a small constant $ c>0 $ such that the following is true for all $ i,j=1,\dots,n $ and $ \alpha,\beta=1,\dots,p $,
$$ \abs{ \Rk_{ij}(z) - \bfR_{ij}(z) } \leq \frac{1}{n^{1+c} \eta}, $$
and
$$ \abs{ \Rk_{n+\alpha,n+\beta}(z) - \bfR_{n+\alpha,n+\beta}(z) } \leq \frac{1}{n^{1+c} \eta}. $$
This is the main technical part of the whole argument, and its proof relies on the Lindeberg exchange method and a martingale concentration argument. For more details, we refer to Lemma \ref{lem:Stability_Resolvent}.

Combining the stability of the resolvents with the resolvent approximation for eigenvectors, we can conclude that $ \v $ and $ \vk $ must be close (similarly, also for $ \u $ and $ \uk $).

\subsection{Stability of the Top Eigenvalue}
As a byproduct, we derive an stability estimate of the top eigenvalues, which may be of independent interest. For $ k \leq n^{5/3-\epsilon_0} $ with some arbitrary $ \epsilon_0>0 $ and an arbitrary $ \eps>0 $,
$$ |\Lk - \lambda| \leq n^{-2/3-\delta+\eps}, $$
where $ \delta>0 $ is some small constant. Note that the fluctuation of the top eigenvalue around its typical location is of order $ O(n^{-2/3+\eps}) $, this result shows that the top eigenvalues under moderate resampling are indeed non-trivially stable.
For more details, we refer to Lemma \ref{lem:Stability_Eigenvalue}.


\section{Discussions and Applications}\label{sec:Discussion}

We have shown that PCA is sensitive to the input data, in the sense that resampling $ \omega(n^{-2/3}) $ fraction of the data will results in decorrelation between the new output and the old output. We further prove that this threshold is sharp that a moderate resampling below this threshold will have no effect on the outcome.

Moreover, besides demonstrating an exciting phenomenon, our results have broad implications in other related fields. We briefly discuss a few potential extensions and applications that would be worth further exploration.

\subsection{Extensions to Broader PCA Models}


\paragraph{Sparse PCA}
A natural extension is to consider sparse data, and this corresponds to sparse PCA that received a lot of attention in the past decade (see e.g. \cite{moghaddam2005spectral,zou2006sparse,cai2013sparse}). However, establishing the sharp phase transition for sparse PCA lacks several technical ingredients. In particular, for the sensitivity regime, the eigenvalue gap property (i.e. tail estimates for the eigenvalue gap, see Lemma \ref{lem:Gap}) is unknown. Also, to establish the sharp threshold for the stability regime, an improved local law of the resolvent is needed (cf. Lemma \ref{lem:Local_law}). Though we expect these missing parts to be correct, proving them would be beyond the scope of this paper and we leave them to future work. Assuming the eigenvalue gap property and the improved local law, the phase transition can be proved by the same arguments in this paper.

\paragraph{PCA with General Population}
For practical purposes, it would be significant to consider data with a general population covariance profile (see e.g. \cite{nadler2008finite}). The corresponding matrices were studied in \cite{bao2015universality,lee2016tracy}. The only missing ingredient we need is the eigenvalue gap property. Under reasonable assumptions for the population matrix so that the eigenvalue gap property is true, our arguments in this paper will yield the same stability-sensitivity transition.

\subsection{Extensions to Other Statistical Methods}

Within the general PCA framework, one important variant is the kernel PCA, which is closely related to the widely used spectral clustering \cite{ng2001spectral,von2007tutorial,couillet2016kernel}. The corresponding kernel random matrices were studied in \cite{el2010spectrum,cheng2013spectrum}. However, the study of these kernel random matrices are far from being well-understood. In particular, the study of eigenvectors were very limited and the indispensable property of delocalization is still unknown.

It would be interesting to explore whether other statistical method share the same resampling sensitivity phenomenon. Random matrices associated with canonical correlation analysis (CCA) or multivariate analysis of variance (MANOVA) are well studied \cite{han2016tracy,han2018unified,yang2022sample}. In particular, the Tracy-Widom concentration of the top eigenvalue, one of the most important ingredients that we need, has been proved. We anticipate that these models exhibit a similar stability-sensitivity transition as in PCA. 

\subsection{Differential Privacy}
In our paper, we study the stability of the top eigenvalue and the top eigenvector under resampling in terms of bounding the $ \ell_\infty $ distance. Such stability estimates can be regarded as the global sensitivity of PCA performed on neighboring datasets. This measurement is closely related to the analysis of differential privacy \cite{dwork2014algorithmic}. PCA under differential privacy was previous studied in
\cite{blum2005practical,chaudhuri2013near}, etc. Our result revisit the problem of designing a private algorithm for solving the principal component. Here we remark that though the statements in Theorem \ref{thm:main1} and Theorem \ref{thm:main2} are qualitative, a careful examinination of the proof can yield some quantitative estimates. Based on the stability estimates in terms of the $ \ell_\infty $ metric, a simple Laplace mechanism produces a differentially private version of PCA for computing the top eigenvalue or the top eigenvector. However, compared with \cite{chaudhuri2013near}, our results are limited in the sense that their results are non-asymptotic for all sample size $ n $ and data dimension $ p $, while ours are restricted to the proportional growth regime.

Moreover, previous works on differentially private PCA focused on neighbouring datasets that differ by one sample vector. Our result may be seen as a refined notion of privacy, since we can analyze the sensitivity of PCA over two ``neighbouring" datasets with $ k $ different entries for any $ k $.

Meanwhile, the largest eigenvalue of the sample covariance matrix plays an important rule in hypothesis testing. For example, the Roy's largest root test is used in many problems (see e.g. \cite{johnstone2017roy}). Our result may provide useful insights to construct a differentially private test statistic based on the top eigenvalue.

\subsection{Database Alignment}
Database alignment (or in some cases graph matching) refers to the optimization problem in which we are given two datasets and the goal is to find an optimal correspondence between the samples and features that maximally align the data. For datasets $ \bfX,\bfY \in \R^{n \times p} $, we look for permutations $ \pi_{\mathrm{s}} \in \calS_n $ and $ \pi_{\mathrm{f}} \in \calS_p$ to solve the optimization problem
$$ \max_{\pi_{\mathrm{s}},\pi_{\mathrm{f}}} \sum_{i=1}^n \sum_{\alpha=1}^p \bfX_{i\alpha}\bfY_{\pi_{\mathrm{s}}(i) \pi_{\mathrm{f}}(\alpha)}, $$
where $ \calS_n $ and $ \calS_p $ are the sets of all permutations on $ [n] $ and $ [p] $, respectively.
This problem is closely related to the Quadratic Assignment Problem (QAP), which is known to be NP-hard to solve or even approximate. 

The study of the alignment problem for correlated random databases has a long history. The previous work mainly considered matrices that are correlated through some additive perturbation, and some of the general model were studied with a homogeneous correlation (i.e. the correlation between all correponding pairs are the same). See for example \cite{dai2019database,wu2022settling} and many other works.

Our resampling procedure may be regarded as an adversarial corruption of the dataset, which is a different kind of correlation compared with previous work. To our knowledge, this is the first time to consider database alignment with adversarial corruption.
To state the setting of the problem, we have two matrices 
$$ \bfX \in \R^{n \times p},\ \ \  \bfY=\Pis \Xk \Pif^\top $$
where $ \bfX $ is a random matrix satisfying \eqref{e.Assumption1} and \eqref{e.Assumption2}, and $ \Pis $ and $ \Pif $ are permutation matrices of order $ n $ and $ p $ chosen uniformly at random. The goal is to recover the permutations $ \Pis $ and $ \Pif $ based on the observations $ \bfX $ and $ \bfY $. Here, we can think of $ \bfY $ as the unlabeled version of $ \bfX$ with adversarial corruption. By considering the covariance matrices, we have
$$ \mathbf{A}=\bfX \bfX^\top,\ \ \mathbf{B}=\bfY \bfY^\top = \Pis \pth{ \Xk (\Xk)^\top } \Pis^\top, $$
and similarly we consider
$$ \hat{\mathbf{A}} = \bfX^\top \bfX,\ \ \hat{\mathbf{B}} = \bfY^\top \bfY = \Pif \pth{(\Xk)^\top \Xk} \Pif^\top. $$
A natural idea to reconstruct the permutations $ \Pis $ (and $ \Pif) $ is to align the top eigenvectors of the matrices $ \mathbf{A} $ and $ \mathbf{B} $ (and $ \hat{\mathbf{A}} $ and $ \hat{\mathbf{B}} $). See Algorithm \ref{alg:PCA} for details. This spectral method is a natural technique for databse alignment and graph matching. We are interested in under what resampling strength, the PCA-Recovery algorithm can almost perfectly reconstruct the permutations, and under what condition this method completely fail.

\begin{algorithm}[tb]
   \caption{PCA-Recovery}
   \label{alg:PCA}
\begin{algorithmic}
   \STATE {\bfseries Input:} data matrices $\bfX,\bfY \in \R^{n \times p}$
   \STATE {\bfseries Output:} permutation matrices $ \Pis \in \R^{n \times n} $, $ \Pif \in \R^{p \times p} $
   \STATE Compute $ \u $ the unit leading left singular vectors of $ \bfX $
   \STATE Compute $ \v $ the unit leading right singular vectors of $ \bfX $
   \STATE Compute $ \u' $ the unit leading left singular vectors of $ \bfY $
   \STATE Compute $ \v' $ the unit leading right singular vectors of $ \bfY $
   \STATE
   \STATE Compute $ \Pis^{+} $ the permutation aligning $ \u $ and $ \u' $
   \STATE Compute $ \Pis^{-} $ the permutation aligning $ \u $ and $ -\u' $
   \STATE Compute $ \Pif^{+} $ the permutation aligning $ \v $ and $ \v' $
   \STATE Compute $ \Pif^{-} $ the permutation aligning $ \v $ and $ -\v' $
   \STATE 
   
   \IF{$ \Iprod{{\mathbf{A}}}{\Pis^{+} {\mathbf{B}} (\Pis^{+})^\top} \geq \Iprod{{\mathbf{A}}}{\Pis^{-} {\mathbf{B}} (\Pis^{-})^\top} $}
   \STATE $\Pis \leftarrow \Pis^{+} $
   \ELSE
   \STATE $\Pis \leftarrow \Pis^{-} $
   \ENDIF
   \STATE
   \IF{$ \Iprod{\hat{\mathbf{A}}}{\Pif^{+} \hat{\mathbf{B}} (\Pif^{+})^\top} \geq \Iprod{\hat{\mathbf{A}}}{\Pif^{-} \hat{\mathbf{B}} (\Pif^{-})^\top} $}
   \STATE $\Pif \leftarrow \Pif^{+} $
   \ELSE
   \STATE $\Pif \leftarrow \Pif^{-} $
   \ENDIF

\end{algorithmic}
\end{algorithm}

Spectral methods have been studied and applied in many scenarios (see e.g. \cite{fan2020spectral,fan2022spectral} and \cite{ganassali2022spectral}). In particular, in \cite{ganassali2022spectral}, a similar PCA method was studied to match two symmetric Gaussian matrices correlated via additive Gaussian noise. Their work proved a similar $ 0-1 $ transition for the inner product of the top eigenvectors, which leads to a all-or-nothing phenomenon in the alignment problem, i.e. the accuracy of the recovery undergoes a sharp transition from $ 0 $ to $ 1 $ near some critical threshold.
However, the arguments in \cite{ganassali2022spectral} are not applicable in our case. Their proof heavily depends on the Gaussian assumption of the matrices, and the additive strucutre of the noise. In particular, they proof crucially relies on the orthogonal invariance of the Gaussian noise. While in our case, the noise is presented in terms of the resampling strength. There is no way to write the ``noise" in an additive form that is independent of the ``signal". Even in the Gaussian case, a rigorous analysis of the PCA-Recovery algorithm seems difficult.

Nevertheless, our results on the sensitivity of the eigenvector inner products suggest that, when $ k \gg n^{5/3} $, the two eigenvectors are approximately de-correlated so that they share almost no common information. Consequently, recovery of the data via aligning the principal components would be basically random guessing. Therefore, we conjecture that if $ k \gg n^{5/3} $, PCA-Recovery fails to recover the latent permutations in the sense that it can only achieve $ o(1) $ fraction of correct matching with the ground truth. On the other hand, when $ k \ll n^{5/3} $, the performance of our algorithm seems mysterious.


In Section \ref{sec:Numerical}, we empirically check the performance of the PCA-Recovery algorithm. Numerical simulations suggest that when $ k \gg n^{5/3} $, the performance of PCA-Recovery is indeed poor in the sense that the accuracy of the recovery is almost $ 0 $. On the other hand, when $ k \ll n^{5/3} $, experiments show that we cannot expect the sharp all-or-nothing phenomenon similarly as in \cite{ganassali2022spectral}.

Finally, we remark that what PCA-Recovery actually studies is a more difficult task, as we do not need direct observations of $ \bfX $ and $ \bfY $. We can consider a harder problem (in both statistical and computational sense), which we call alignment from covariance profile. In this problem, we only have access to the covariance between the samples and we aim to recover the correspondence between the samples from the two databases. A similar problem with Gaussian data and additive noise was considered in
\cite{wang2022random} as a prototype for matching random geometric dot-product graphs. The analysis of such database alignment problem with adversarial corruption will be an interesting direction for future studies.

\section{Numerical Experiments}\label{sec:Numerical}

We validate our theoretical prediction by checking the performance of PCA on synthetic data. To highlight the universality of our results, we will consider Gaussian data and Bernoulli data. The Gaussian data matrix consists of i.i.d. $ \calN(0,1) $ entries. The Bernoulli data matrix consists of i.i.d. entries taking value in $ \{\pm 1\} $ with equal probability $ \frac{1}{2} $. To visualize the stability-sensitivity transition, we focus on the overlap of the leading eigenvectors $ |\Iprod{\v}{\vk}| $ and $ |\Iprod{\u}{\uk}| $ as the observable. Note that, in the stability regime, the asymptotic colinearity \eqref{e.main2} implies that $ |\Iprod{\v}{\vk}|,|\Iprod{\u}{\uk}| \to 1 $. Therefore, we expect a phase transition from 1 to 0 at the critical threshold $ k \asymp n^{5/3} $.


We first focus on rectangular data matrices. For concreteness, we set $ p/n=0.25 $ and $ n=1000 $. As shown in Figure \ref{fig:Rectangle}, there is a clear phase transition for the eigenvector overlap varying from $ 1 $ to $ 0 $. Also, it provide good evidence that the transition happens at the critical threshold $ k \asymp n^{5/3} $.

\begin{figure}[h!]
     \centering
     \begin{subfigure}[b]{0.4\columnwidth}
         \centering
         \includegraphics[width=\textwidth]{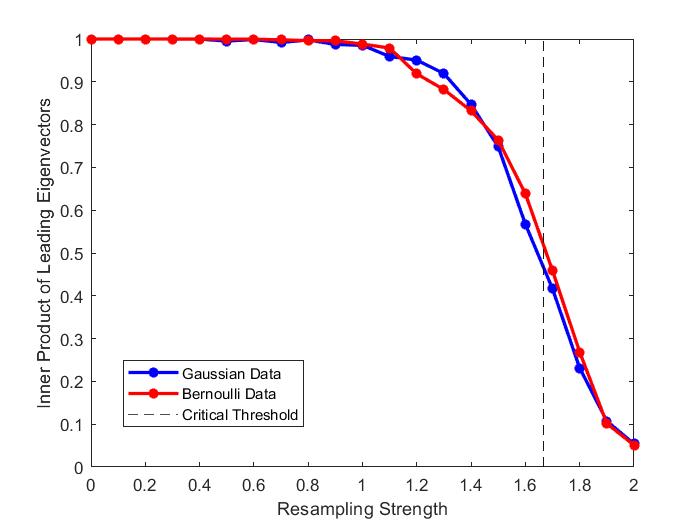}
         \caption{The inner product $ |\Iprod{\v}{\vk}| $}
         \label{fig:Rectangle_v}
     \end{subfigure}
     \begin{subfigure}[b]{0.4\columnwidth}
         \centering
         \includegraphics[width=\textwidth]{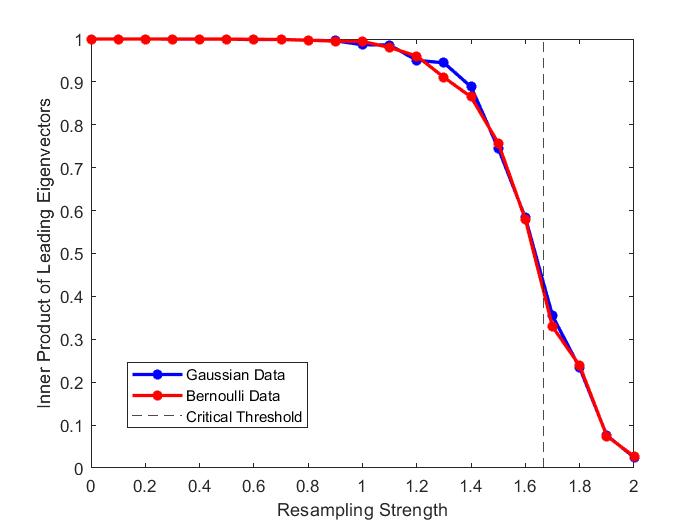}
         \caption{The inner product $ |\Iprod{\u}{\uk}| $}
         \label{fig:Rectangle_u}
     \end{subfigure}
        \caption{Inner products of the leading eigenvectors for $ 1000 \times 250 $ matrices with Gaussian and Bernoulli data. The horizontal axis is the resampling strength, given by $ \log_n(4k) $. Each experiment is averaged over 50 repetitions.}
        \label{fig:Rectangle}
\end{figure}

We also check square data $ 1000 \times 1000 $ matrices. As shown in Figure \ref{fig:Square}, for both Gaussian data and Bernoulli data, we have the same phase transition for the overlap of the leading eigenvectors. Again, this numerical simulation supports our theoretical prediction that the transition happens at the critical threshold $ k \asymp n^{5/3} $.

\begin{figure}[h!]
     \centering
     \begin{subfigure}[b]{0.4\columnwidth}
         \centering
         \includegraphics[width=\textwidth]{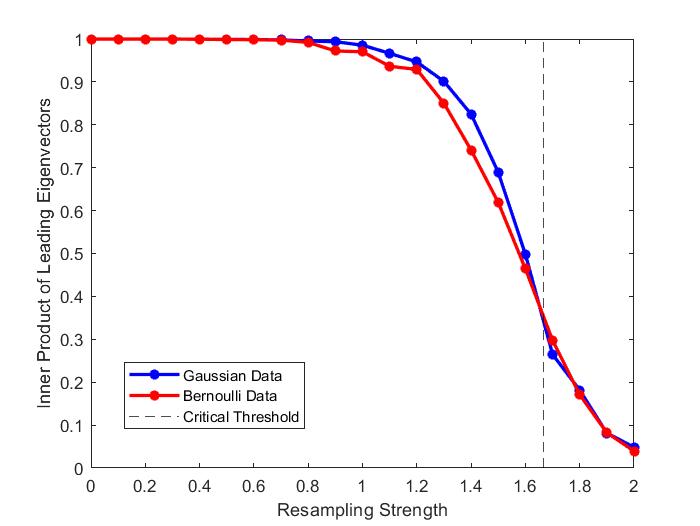}
         \caption{The inner product $ |\Iprod{\v}{\vk}| $}
         \label{fig:Sqaure_v}
     \end{subfigure}
     \begin{subfigure}[b]{0.4\columnwidth}
         \centering
         \includegraphics[width=\textwidth]{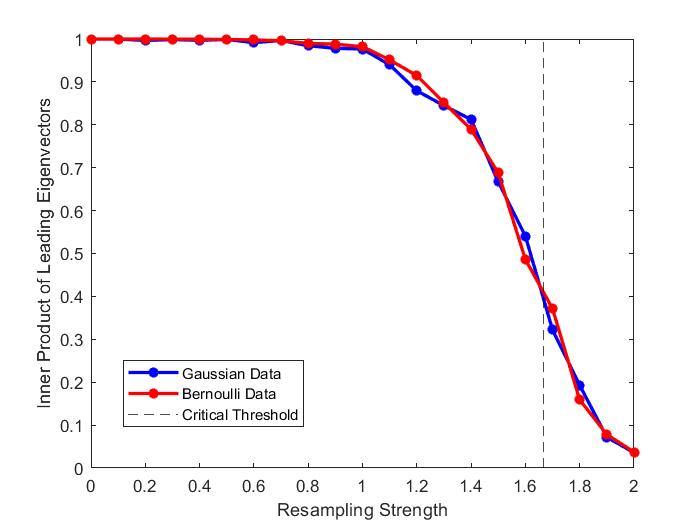}
         \caption{The inner product $ |\Iprod{\u}{\uk}| $}
         \label{fig:Square_u}
     \end{subfigure}
        \caption{Inner products of the leading eigenvectors for $ 1000 \times 1000 $ matrices with Gaussian and Bernoulli data. The horizontal axis is the resampling strength, given by $ \log_n(k ) $. Each experiment is averaged over 50 repetitions.}
        \label{fig:Square}
\end{figure}


Also we check the performance of the PCA-Recovery algorithm for database alignment. Still, we consider both Gaussian data and Bernoulli data. As shown in Figure \ref{fig:Recovery}, in both the rectangular case and the square case, the accuracy of the algorithm is almost $ 0 $ as long as the resampling strength $ k $ surpasses $ n^{5/3} $. This is consistent with the theoretical prediction that in this case the top eigenvectors approximately decorrelate. On the other hand, numerical simulations suggest that PCA-Recovery is brittle to resampling. In particular, we cannot expect a similar all-or-nothing phenomenon as in \cite{ganassali2022spectral}. Identifying the critical threshold below which PCA-Recovery can achieve almost perfect recovery is an interesting open problem for future research.

\begin{figure}[h!]
     \centering
     \begin{subfigure}[b]{0.4\columnwidth}
         \centering
         \includegraphics[width=\textwidth]{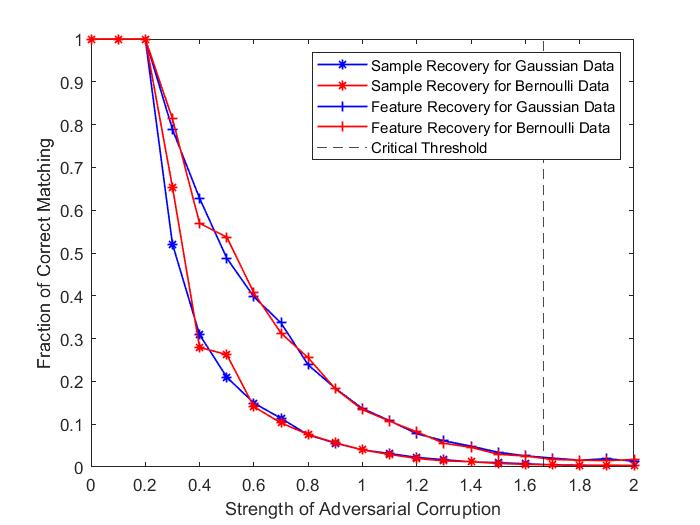}
         \caption{Fraction of correct matching for rectangular data}
         \label{fig:Recovery_Rectangle}
     \end{subfigure}
     \begin{subfigure}[b]{0.4\columnwidth}
         \centering
         \includegraphics[width=\textwidth]{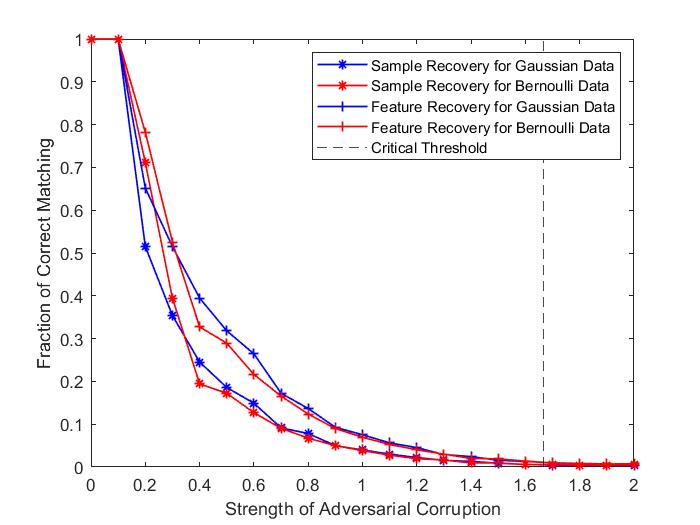}
         \caption{Fraction of correct matching for square data}
         \label{fig:Recovery_Square}
     \end{subfigure}
        \caption{Performance of the PCA algorithm for database alignment with adversarial corruption}
        \label{fig:Recovery}
\end{figure}

\appendix

\section{Notations and Organization}
We use $ C $ to denote generic constant, which may be different in each appearance. We denote $ A \lesssim B $ if there exists a universal $ C>0 $ such that $ A \leq CB $, and denote $ A \gtrsim B $ if $ A \geq CB $ for some universal $ C>0 $. We write $ A \asymp B $ if $ A \lesssim B $ and $ B \lesssim A $.

For the analysis of the sample covariance matrix, it is useful to apply the linearization trick (see e.g. \cite{tropp2012user,ding2018necessary,fan2018eigenvector}). Specifically, we will also analyze the symmetrization of $ \bfX $, which is defined as
\begin{equation}\label{eq:Symmetrization}
\tbfX := \left(
\begin{matrix}
0 & \bfX\\
\bfX^\top & 0
\end{matrix}
\right)
\end{equation}
The spectrum of the symmetrization $ \tbfX $ are given by the singular values $ \{\sqrt{\lambda_m}\}_{m=1}^p $, the symmetrized singular values $ \{-\sqrt{\lambda_m}\}_{m=1}^p $, and trivial eigenvalue $ 0 $ with multiplicity $ n-p $. Let $ \w_i:=(\u_i^\top,\v_i^\top)^\top \in \R^{n+p} $ be the concatenation of the eigenvectors $ \u_i $ and $ \v_i $. Then $ \w_i $ is the eigenvector of $ \tbfX $ associated with the eigenvalue $ \sqrt{\lambda_i} $. Indeed, we have
\begin{equation*}
\left(
\begin{matrix}
0 & \bfX\\
\bfX^\top & 0
\end{matrix}
\right) 
\left( 
\begin{matrix}
\u_i\\
\v_i
\end{matrix} 
\right) 
=
\left(
\begin{matrix}
\bfX v_i\\
\bfX^\top \u_i
\end{matrix}
\right)
=
\left(
\begin{matrix}
\sqrt{\lambda_i} \u_i\\
\sqrt{\lambda_i} \v_i
\end{matrix}
\right).
\end{equation*}

An important probabilistic concept that will be used repeatedly is the notion of overwhelming probability.

\begin{definition}[Overwhelming probability]
Let $ \{\calE_N\} $ be a sequence of events. We say that $ \calE_N $ holds with overwhelming probability if for any (large) $ D>0 $, there exists $ N_0(D) $ such that for all $ N \geq N_0(D) $ we have
$$ \P(\calE_N) \geq 1-N^{-D}. $$
\end{definition}


\paragraph{Organization}
The appendix is organized as follows. In Section \ref{sec:Prelim}, we collect some useful tools for the proof, including a variance formula for resampling and classical results from random matrix theory. In Section \ref{sec:Sensitivity}, we prove the sensitivity of PCA under excessive resampling. In Section \ref{sec:Stability}, we prove that PCA is stable if resampling of the data is moderate.

\section{Preliminaries}\label{sec:Prelim}

\subsection{Variance formula and resampling}
An essential technique for our proof is the formula for the variance of a function of independent random variables. This formula represents the variance via resampling of the random variables. This idea is first due to Chatterjee \cite{ChatterjeeThesis}, and in this paper we will use a slight extension of it as in \cite{bordenave2020noise}.

Let $ X_1,\cdots,X_N $ be independent random variables taking values in some set $ \mathcal{X} $, and let $ f: \mathcal{X}^N \to \R $ be some measurable function. Let $ X=\left( X_1,\cdots,X_N \right) $ and $ X' $ be an independent copy of $ X $. We denote
$$ X^{(i)}=(X_1,\cdots,X_{i-1},X_i',X_{i+1},\cdots,X_N),\ \ \mbox{and}\ \ X^{[i]}=(X_1',\cdots,X_i',X_{i+1},\cdots,X_N). $$
And in general, for $ A \subset [N] $, we define $ X^A $ to be the random vector obtained from $ X $ by replacing the components indexed by $ A $ by corresponding components of $ X' $. By a result of Chatterjee \cite{ChatterjeeThesis}, we have the following variance formula
$$ \Var\left( f(X) \right) = \dfrac{1}{2}\sum_{i=1}^N \EE \left[ \left( f(X) - f(X^{(i)}) \right)  \left( f(X^{[i-1]}) - f(X^{[i]}) \right) \right]. $$
We remark that this variance formula does not depend on the order of the random variables. Therefore, we can consider an arbitrary permutation of $ [N] $. Specifically, let $ \pi=(\pi(1),\cdots,\pi(N)) $ be a random permutation sampled uniformly from the symmetric group $ \calS_N $ and denote $ \pi([i]) := \{\pi(1),\cdots,\pi(i)\} $. Then we have
$$ \Var\left( f(X) \right) = \dfrac{1}{2}\sum_{i=1}^n \EE \left[ \left( f(X) - f(X^{(\pi(i))}) \right)  \left( f(X^{\pi([i-1])}) - f(X^{\pi([i])}) \right) \right]. $$
Note that, in the formula above, the expectation is taken with respect to both $ X $, $ X' $ and the random permutation $ \pi $.

Let $ j $ have uniform distribution over $ [N] $. Let $ X^{(j) \circ \pi([i-1])} $ denote the vector obtained from $ X^{\pi([i-1])} $ by replacing its $ j $-th component by another independent copy of the random variable $ X_j $ in the following way: If $ j $ belongs to $ \pi([i-1]) $, then we replace $ X_j' $ by $ X_j'' $; if $ j $ is not in $ \pi([i-1]) $, then we replace $ X_j $ by $ X_j''' $, where $ X'' $ and $ X''' $ are independent copies of $ X $ such that $ (X,X',X'',X''') $ are independent. With this notation, we have the following estimates.

\begin{lemma}[{Lemma 3 in \cite{bordenave2020noise}}]\label{l.VarianceBound}
Assume that $ j $ is chosen uniformly at random from the set $ [N]$ and independently of other random variables involved, we have for any $ k \in [N] $,
$$ B_k \leq \dfrac{2 \Var\left( f(X) \right)}{k}\left( \dfrac{N+1}{N}\right) $$
where for any $ i \in [N] $,
$$ B_i := \EE \left[ \left( f(X)-f(X^{(j)}) \right) \left( f(X^{\pi([i-1])}) - f(X^{(j)\circ\pi([i-1])}) \right) \right] $$
and the expectation is taken with respect to components of vectors, random permutations $ \pi $ and the random variable $ j $.
\end{lemma}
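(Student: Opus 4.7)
My plan is to start from the permutation-based Chatterjee variance identity already displayed just above the lemma, written in the form
$$ 2\Var(f(X)) = \sum_{i=1}^n T_i, \qquad T_i := \EE\bigl[ ( f(X) - f(X^{(\pi(i))}) )\, ( f(X^{\pi([i-1])}) - f(X^{\pi([i])}) ) \bigr]. $$
Since $ X^{\pi([i])} = X^{(\pi(i)) \circ \pi([i-1])} $, the summand $T_i$ is exactly the version of $B_i$ in which the external uniform index $j$ is replaced by $\pi(i)$. The proof is then a careful comparison of these two ``uniform'' resampling mechanisms, combined with a monotonicity inequality of the form $k B_k \leq \sum_{i=1}^k B_i$.

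For the comparison between $B_i$ and $T_i$, I would condition on the random permutation $\pi$ and decompose $B_i$ according to the event $\{j \in \pi([i-1])\}$ versus its complement. On the complement, which has conditional probability $(n-i+1)/n$, the vector $X^{(j) \circ \pi([i-1])}$ equals $X^{\pi([i-1]) \cup \{j\}}$, and the conditional uniformity of $j$ on $[n] \setminus \pi([i-1])$ (in law identical to that of $\pi(i)$ given $\pi([i-1])$) lets me identify this contribution with $T_i$ up to the factor $(n-i+1)/n$. On the event $\{j \in \pi([i-1])\}$, the doubly-resampled coordinate $X_j''$ enters; by independence of $X_j''$ from $(X, X', \pi)$ and a tower-property argument, this contribution can be rewritten in a comparable averaged form. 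Tracking probabilities in the two cases produces the extra $(n+1)/n$ prefactor.

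For the monotonicity piece, I would argue that $i \mapsto B_i$ is non-increasing. Intuitively, as more coordinates are resampled in the second factor $f(X^{\pi([i-1])}) - f(X^{(j)\circ\pi([i-1])})$, extra coordinates become independent of $X$, which smooths the second factor toward its conditional mean and shrinks the covariance-like quantity $B_i$ via Jensen applied to the inner conditional expectation. Monotonicity yields $k B_k \leq \sum_{i=1}^k B_i$, and combining with the previous step gives
$$ k B_k \;\leq\; \frac{n+1}{n}\sum_{i=1}^n T_i \;=\; \frac{n+1}{n} \cdot 2 \Var(f(X)), $$
which is exactly the claimed bound after dividing by $k$.

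The main obstacle I expect is the bookkeeping of the case $\{j \in \pi([i-1])\}$: the second factor in $B_i$ then involves simultaneously $X_j'$ (used inside $X^{\pi([i-1])}$) and $X_j''$ (used inside $X^{(j)\circ\pi([i-1])}$), and the direct match with $T_i$ breaks down. It is precisely this mismatch that forces the $(n+1)/n$ correction rather than a clean $1$, and controlling the sign and magnitude of this residual contribution via repeated use of conditional independence is the delicate step. Once that piece is absorbed, the rest is a straightforward rearrangement of Chatterjee's identity together with the monotonicity of $B_i$.
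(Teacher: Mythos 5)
The paper does not prove this statement; it is cited verbatim as Lemma 3 of \cite{bordenave2020noise} and used as a black box, so there is no in-paper argument to compare against. Judged on its own terms, your proposal has two genuine gaps, and the second one is fatal.

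First, the identity $X^{\pi([i])} = X^{(\pi(i))\circ\pi([i-1])}$ that you assert at the outset is false. By the definitions just above the lemma, $X^{\pi([i])}$ carries $X'_{\pi(i)}$ in coordinate $\pi(i)$, whereas $X^{(\pi(i))\circ\pi([i-1])}$ carries the fresh copy $X''_{\pi(i)}$. The two vectors are equal only in distribution, and that is not enough here: the first factor $f(X)-f(X^{(j)})$ itself involves $X'_j$, so in the Chatterjee summand $T_i$ the resampled value $X'_{\pi(i)}$ is \emph{shared} between the two factors, while in $B_i$ (even on $\{j\notin\pi([i-1])\}$) the second factor carries the \emph{independent} value $X''_j$. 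Working out the four cross terms via $g_A:=\EE[f\mid X_{A^c}]$, one finds, for $\ell\notin A$,
$$
\EE\bigl[(f(X)-f(X^{(\ell)}))(f(X^A)-f(X^{(\ell)\circ A}))\bigr]=\EE[g_A^2]-\EE[g_{A\cup\{\ell\}}^2],
$$
whereas $\EE[(f(X)-f(X^{(\ell)}))(f(X^A)-f(X^{A\cup\{\ell\}}))]=2\bigl(\EE[g_A^2]-\EE[g_{A\cup\{\ell\}}^2]\bigr)$. So the conditional contribution on $\{j\notin\pi([i-1])\}$ is $T_i/2$, not $T_i$, and your main ``comparison'' step is off by a factor of $2$.

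Second, and more seriously, the monotonicity claim ``$i\mapsto B_i$ is non-increasing'' is false, and your Jensen heuristic does not establish it. Setting $a_m:=\EE_{|S|=m}\,\EE[g_S^2]$ and $d_i:=a_{i-1}-a_i\ge 0$, the computation above (together with the analogous one for $j\in\pi([i-1])$, which yields the non-positive term $\EE[g_A^2]-\EE[g_{A\setminus\{j\}}^2]$) gives the exact decomposition
$$
B_i=\frac{n-i+1}{n}\,d_i-\frac{i-1}{n}\,d_{i-1}.
$$
For $f=X_1X_2$ with i.i.d.\ mean-zero unit-variance coordinates this evaluates to $B_i=\frac{2(n-i+1)(n-2i+1)}{n^2(n-1)}$, which decreases to a negative minimum near $i\approx 3n/4$ and then \emph{increases}; e.g.\ $n=10$ gives $B_8<B_9<B_{10}$. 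So $kB_k\le\sum_{i\le k}B_i$ is not available. What does hold, and what a correct proof should use, is the non-negativity of $(i-1)d_{i-1}/n$ and the \emph{convexity of $a$}, i.e.\ that $d_i$ is non-increasing (this follows from the Efron--Stein expansion $a_m=\sum_T \hat f(T)^2\binom{n-|T|}{m}/\binom{n}{m}$, each coefficient being convex in $m$). Then $B_k\le\frac{n-k+1}{n}d_k\le\frac{n+1}{n}\cdot\frac{1}{k}\sum_{i=1}^k d_i\le\frac{n+1}{n}\cdot\frac{\Var(f(X))}{k}$, which is stronger than the stated bound. In short: the correct monotonicity is of $d_i$, not of $B_i$, and the correct identification of the on-event piece is $T_i/2$, not $T_i$.
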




\subsection{Tools from random matrix theory}
In this section we collect some classical results in random matrix theory, which will be indispensable for proving the main theorems. These include concentration of the top eigenvalue, eigenvalue rigidity estimates, and eigenvector delocalization.

To begin with, we first state some basic settings and notations. It is well known that the empirical distribution of the spectrum of the sample covariance matrix converges to the Marchenko-Pastur distribution
\begin{equation}\label{eq:MP_Law}
\rho_{\MP}(x) = \dfrac{1}{2\pi \xi}\sqrt{\dfrac{[(x-\lambda_-)(\lambda_+ -x)]_+}{x^2}},
\end{equation}
where the endpoints of the spectrum are given by
\begin{equation}\label{e.EndPoints}
\lambda_{\pm} = (1 \pm \sqrt{\xi})^2.
\end{equation}
Define the typical locations of the eigenvalues:
$$ \gamma_m :=  \inf \left\{ E>0 : \int_{-\infty}^{E} \rho_{\MP}(x)\d x \geq \dfrac{m}{p} \right\},\ \ \ 1 \leq m \leq p. $$

A classical result in random matrix theory is the rigidity estimates of the eigenvalues \cite{pillai2014universality,bloemendal2014isotropic}. Let $ \hat{m}:=\min(m,p+1-k) $, for any small $ \eps>0 $ and large $ D>0 $ there exists $ n_0(\eps,D) $ such that the following holds for any $ n \geq n_0 $,
\begin{equation}\label{eq:Rigidity}
\P \left(|\lambda_m - \gamma_m| \leq  n^{-\frac{2}{3}+\eps}(\hat{m})^{-\frac{1}{3}} \  \mbox{for all}\ 1 \leq m \leq p \right) > 1-n^{-D}.
\end{equation}
We remark that the square case $ \xi \equiv 1 $ is actually significantly different, due to the singularity of the Marchenko-Pastur law at the spectral edge $ x=0 $. Near this edge, the typical eigenvalue spacing would be of order $ O(n^{-2}) $. In this case, it would be more convenient to state the rigidity estimates in terms of the singular values of the $ n \times n $ data matrix. The following result was proved in \cite{ajanki2014local}. For any small $ \eps>0 $ and large $ D>0 $, there exists $ n_0(\eps,D) $ such that the following is true for any $ n \geq n_0 $,
\begin{equation}\label{eq:Rigidity_Square}
\P \left( |\sqrt{\lambda_m} - \sqrt{\gamma_m}| \leq n^{-\frac{2}{3}+\eps} (n+1-m)^{-\frac{1}{3}} \  \mbox{for all}\ 1 \leq m \leq n \right) > 1-n^{-D}.    
\end{equation}
The intuition for this case is that the singular values of a square data matrix behaves like the eigenvalues of a Wigner matrix. More specifically, the singular values $ \{\sqrt{\lambda_m}\} $ and their symmetrization $ \{-\sqrt{\lambda_m}\} $ are the eigenvalues of the symmetrized matrix $ \tbfX $ defined in \eqref{eq:Symmetrization}, and $ \tbfX $ can be seen as a $ 2n \times 2n $ Wigner matrix with imprimitive variance profile (see \cite{ajanki2014local}). For more details, this was explained in \cite{wang2019quantitative,wang2022optimal}.

Another important result is the Tracy-Widom limit for the top eigenvalue (see e.g. \cite{pillai2014universality,ding2018necessary,wang2019quantitative,schnelli2021convergence}). Specifically,

\begin{lemma}\label{lem:Tracy-Widom}
For any $ \eps>0 $, with overwhelming probability, we have
$$ |\lambda - \lambda_+| \leq n^{-2/3+\eps},\ \ \mbox{and}\ \ \Var(\lambda) \lesssim  n^{-4/3}. $$
Moreover, for any $ \delta>0 $, there exists a constant $ c_0>0 $ such that
$$ \P \pth{\lambda_1 - \lambda_2 \geq c_0 n^{-2/3} } \geq 1-\delta. $$
\end{lemma}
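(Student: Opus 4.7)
The plan is to derive all three claims from the edge universality and rigidity theory for sample covariance matrices that is already cited in the statement; the three assertions concern, respectively, the location, the fluctuation scale, and the spacing of $\lambda_1$ near the soft edge $\lambda_+ = (1+\sqrt{\xi})^2$.

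First I would dispense with the location bound by specializing the eigenvalue rigidity estimate \eqref{eq:Rigidity} to $k=1$. A direct computation of the Marchenko-Pastur distribution function near its right endpoint gives $\lambda_+ - \gamma_1 = O(n^{-2/3})$, so the $k=1$ case of \eqref{eq:Rigidity} yields
\[
|\lambda - \lambda_+| \leq |\lambda_1 - \gamma_1| + |\gamma_1 - \lambda_+| \leq \varphi^{1/2} n^{-2/3} + O(n^{-2/3}) \leq n^{-2/3+\eps}
\]
on an overwhelming-probability event, where $\varphi$ is the standard polylogarithmic factor.

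Next, for the variance bound I would combine the edge Tracy-Widom limit with a uniform integrability argument. The cited works (notably \cite{pillai2014universality,ding2018necessary,wang2019quantitative,schnelli2021convergence}) show that $n^{2/3}(\lambda - \lambda_+)$ converges in distribution to a Tracy-Widom law, which has finite second moment. To turn this into the finite-$n$ bound $\Var(\lambda) \lesssim n^{-4/3}$ I would invoke the quantitative right-tail estimate $\P(\lambda - \lambda_+ \geq t n^{-2/3}) \lesssim e^{-c t^{3/2}}$ valid in the moderate-deviation range $1 \leq t \leq n^{1/3}$ (a standard output of the Green function comparison/rigidity method), together with the far-tail bound $\P(\lambda \geq C) \leq e^{-n^{c}}$ coming from \eqref{e.Assumption2} applied to the operator norm of $\bfX$. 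These tail bounds give uniform integrability of $n^{4/3}(\lambda - \lambda_+)^2$, and hence the desired variance estimate.

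Finally, for the spectral gap $\lambda_1 - \lambda_2$, I would invoke the joint Tracy-Widom limit for the top few rescaled eigenvalues: the pair $n^{2/3}(\lambda_1 - \lambda_+, \lambda_2 - \lambda_+)$ converges jointly to the top two points $(A_1, A_2)$ of the Airy point process, whose law has a continuous density on the open set $\{a_1 > a_2\}$. Hence
\[
\P\pth{\lambda_1 - \lambda_2 \leq c n^{-2/3}} \xrightarrow{n \to \infty} \P(A_1 - A_2 \leq c),
\]
and the right-hand side tends to $0$ as $c \downarrow 0$, so any prescribed $\delta > 0$ can be matched by choosing $c$ sufficiently small.

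The main obstacle is the variance bound: the Tracy-Widom convergence is only in distribution, and upgrading to a uniform second-moment estimate requires tail control that is not automatic from the limit theorem itself. However, the sharp right-tail estimates needed are intrinsic to the rigidity/Green function comparison machinery used to prove the Tracy-Widom convergence, so they can be extracted directly from the same references without additional work.
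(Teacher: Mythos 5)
The paper does not actually prove this lemma; it is stated with citations to the rigidity and edge-universality literature for sample covariance matrices, so what you are doing is reconstructing the argument from those references. Your reconstruction is correct in all three parts: the location bound follows from \eqref{eq:Rigidity} at $k=1$ exactly as you say, and the gap bound via joint convergence of the top two rescaled eigenvalues to the top two points of the Airy point process (whose difference has no atom at zero) is also the standard route.

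The one place that deserves emphasis, which you correctly flag as the main obstacle, is the variance bound. The rigidity estimate \eqref{eq:Rigidity}, with its polylogarithmic window $\varphi^{1/2}n^{-2/3}$ and polynomial failure probability $N^{-D}$, gives by itself only $\Var(\lambda)\lesssim \varphi\,n^{-4/3}=n^{-4/3+o(1)}$, and here a log factor would actually matter: the lemma is applied in Step~3 of Theorem~\ref{thm:main1} with the exact threshold $k\gg n^{5/3}$ and no slack, so the clean $n^{-4/3}$ is genuinely required. You are right that this needs the moderate-deviation right-tail bound $\P(\lambda-\lambda_+\geq tn^{-2/3})\lesssim e^{-ct^{3/2}}$ (together with the even faster decaying left tail, which you should mention), but these do not follow from \eqref{eq:Rigidity} as stated and have to be pulled from the quantitative Tracy--Widom works \cite{wang2019quantitative,schnelli2021convergence} cited in the lemma rather than from the rigidity machinery alone. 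With that caveat made explicit, the argument you outline is sound and matches what those references supply.
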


\begin{proof}
The first result follows from the well-known Tracy-Widom limit for the top eigenvalue. Specifically,
$$ \lim_{n \to \infty}\P \pth{ \gamma p^{2/3}(\lambda - \lambda_+) \leq s} = F_1(s), $$
where $ \gamma $ is a constant depending only on the ratio $ \xi $ and $ F_1 $ is the type-1 Tracy-Widom distribution (in particular, \cite{wang2019quantitative,schnelli2021convergence} provided quantitative rate of convergence).
For the variance part, the Gaussian case was proved in \cite{ledoux2010small}, and the general case follows from universality, i.e. for any fixed $ m $
$$ \lim_{n \to \infty} \P \pth{ \pth{p^{2/3}(\lambda_\ell - \lambda_+) \leq s_\ell}_{1 \leq \ell \leq m} } = \lim_{n \to \infty} \P^{G} \pth{ \pth{p^{2/3}(\lambda_\ell - \lambda_+) \leq s_\ell}_{1 \leq \ell \leq m} }, $$
where $ \P^G $ denotes the probability measure associated with the Gaussian matrix.
The spectral gap estimate also follows from universality that the spectral statistics of the sample covariance matrix is the same as the Gaussian Orthogonal Ensemble (GOE), i.e. for any fixed $ m $
$$ \lim_{n \to \infty} \P \pth{ \pth{\gamma p^{2/3}(\lambda_\ell - \lambda_+) \leq s_\ell}_{1 \leq \ell \leq m} } = \lim_{n \to \infty} \P \pth{ \pth{p^{2/3}(\lambda_\ell^{GOE} - 2) \leq s_\ell}_{1 \leq \ell \leq m} } $$
For GOE, the desired spectral gap estimate was proved in e.g. \cite{anderson2010introduction}.
\end{proof}

Moreover, an estimate on the eigenvalue gap near the spectral edge is needed. The following result was proved in \cite{TaoVu,wang2012random}

\begin{lemma}\label{lem:Gap}
For any $ c>0 $, there exists $ \kappa>0 $ such that for every $ 1 \leq i \leq p $, with probability at least $ 1-n^{-\kappa} $, we have
$$ |\lambda_i - \lambda_{i+1}| \geq n^{-1-c}. $$
\end{lemma}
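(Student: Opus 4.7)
The plan is to condition on all but one row of $\bfX$, which converts the eigenvalue-gap question into a small-ball estimate for a linear functional of a single random vector. Fix $i \in [p]$. Writing $\bfX$ with its last row $x_n^\top$ separated, one has the rank-one decomposition $\bfH = \bfH^{(n)} + x_n x_n^\top$, where $\bfH^{(n)}$ is the sample-covariance matrix built from the first $n-1$ rows of $\bfX$. Let $\mu_1 \geq \cdots \geq \mu_p$ be the eigenvalues of $\bfH^{(n)}$ with orthonormal eigenvectors $w_1, \ldots, w_p$. Cauchy interlacing gives $\lambda_{i+1} \leq \mu_i \leq \lambda_i$, so the target gap $\lambda_i - \lambda_{i+1}$ straddles $\mu_i$.

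Conditional on $\bfX^{(n)}$ (the matrix without the $n$-th row), the eigenvalues of $\bfH$ distinct from the $\mu_j$'s are the roots of the secular equation $\sum_{j=1}^p |\langle x_n, w_j\rangle|^2/(\lambda - \mu_j) = 1$. Isolating the pole at $\mu_i$ and arguing that the complementary reaction sum $g_i(\lambda) := \sum_{j \neq i} |\langle x_n, w_j\rangle|^2/(\lambda - \mu_j)$ stays bounded away from $1$ with high probability (using eigenvalue rigidity \eqref{eq:Rigidity} together with the concentration $\sum_j |\langle x_n, w_j\rangle|^2 = \|x_n\|_2^2 = 1 + o(1)$), one obtains
$$\lambda_i - \lambda_{i+1} \;\geq\; c\,|\langle x_n, w_i\rangle|^2$$
on an event of probability $1 - n^{-D}$. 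The problem then reduces to showing that $|\langle x_n, w_i\rangle|$ is not too small.

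For the latter, apply the eigenvector delocalization estimate to $\bfH^{(n)}$ (which satisfies the same hypotheses as $\bfH$ up to the negligible replacement $n \mapsto n-1$) to obtain $\|w_i\|_\infty \leq n^{-1/2 + \eps}$ with overwhelming probability. Conditional on $\bfX^{(n)}$, the random variable $p^{1/2}\langle x_n, w_i\rangle = \sum_\alpha \xi_{n,\alpha} w_i(\alpha)$ is a linear combination of i.i.d.\ mean-zero unit-variance sub-exponential variables with $\ell^2$ coefficient norm $1$ and $\ell^\infty$ norm $\leq n^{-1/2 + \eps}$. By the Berry-Esseen theorem its law is within Kolmogorov distance $O(n^{-1/2+O(\eps)})$ of $N(0,1)$, which yields $\P(|\langle x_n, w_i\rangle| \leq \delta \mid \bfX^{(n)}) \lesssim \delta \, p^{1/2} + n^{-1/2 + O(\eps)}$. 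Choosing $\delta \asymp p^{-1/2} n^{-c/2}$ corresponds to the gap threshold $n^{-1-c}$ and produces the claim with, say, $\kappa = \min(c,1)/3$.

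The main obstacle is the quantitative control on the reaction field $g_i(\cdot)$ near $\lambda = \mu_i$: one must rule out that $g_i(\mu_i) \approx 1$, in which case the secular equation could admit a root extremely close to $\mu_i$ even without $|\langle x_n, w_i\rangle|$ being small, and one needs this uniformly for $\lambda$ at distance $\leq n^{-1-c}$ from $\mu_i$. Both are handled by combining eigenvalue rigidity for $\{\mu_j\}$ with crude moment bounds on the projections $\langle x_n, w_j\rangle$ for $j \neq i$, but this is the most delicate step, especially near the spectral edge where eigenvalue spacings are of order $n^{-2/3}$ rather than $n^{-1}$ and the typical magnitude of $g_i(\mu_i)$ is larger.
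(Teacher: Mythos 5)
First, note that the paper does not prove this lemma: it is quoted directly from \cite{TaoVu,wang2012random}, so there is no internal proof to compare against. That said, your overall plan --- remove a row so that $\bfH = \bfH^{(n)} + x_n x_n^\top$, use Cauchy interlacing and the secular equation $\sum_j |\langle x_n,w_j\rangle|^2/(\lambda-\mu_j)=1$, and finish with an anti-concentration estimate for $\langle x_n,w_i\rangle$ via delocalization and Berry--Esseen --- is indeed the blueprint of the cited arguments, and your small-ball step via $\|w_i\|_\infty \leq n^{-1/2+\eps}$ is correct.

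The genuine gap is in the reduction to $\lambda_i-\lambda_{i+1} \gtrsim |\langle x_n,w_i\rangle|^2$, which is the crux and is not established by what you wrote. Isolating the pole at $\mu_i$ gives $\lambda_i-\mu_i = |\langle x_n,w_i\rangle|^2/\bigl(1-g_i(\lambda_i)\bigr)$ with $1-g_i(\lambda_i)>0$, so what is needed is an \emph{upper} bound on $1-g_i(\lambda_i)$, i.e.\ a \emph{lower} bound on $g_i(\lambda_i)$; "$g_i$ bounded away from $1$" is neither the right direction nor even consistent with the picture, since $g_i$ is decreasing and $g_i(\lambda_{i+1})>1>g_i(\lambda_i)$, so $g_i$ must cross $1$ between the two roots. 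The actual danger is that $g_i(\lambda_i)$ can be very negative, which happens precisely when $\mu_{i-1}-\lambda_i$ is small (and symmetrically $g_i(\lambda_{i+1})$ blows up when $\lambda_{i+1}-\mu_{i+1}$ is small). But the separation of $\mu_{i\pm1}$ from $\lambda_i,\lambda_{i+1}$ at scale $n^{-1-c}$ is governed by eigenvalue gaps of the (essentially identical) minor model, and the rigidity estimate \eqref{eq:Rigidity} does not supply it --- rigidity locates each $\mu_j$ near its classical location $\gamma_j$ at scale $n^{-2/3}$, but says nothing about the \emph{relative} spacing of consecutive eigenvalues below that scale. Moment bounds on the projections $\langle x_n,w_j\rangle$ do not help either, because the problematic contribution is a single large summand, not a diffuse sum. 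This circularity is exactly why the proofs in \cite{TaoVu,wang2012random} (and in Nguyen--Tao--Vu) are genuinely nontrivial: they use an iterated-conditioning / multi-scale argument together with Littlewood--Offord-type anti-concentration, rather than a one-shot bound on the reaction field. A minor secondary point: your worry about the edge is backwards --- near $\lambda_+$ the typical spacing is of order $n^{-2/3}\gg n^{-1-c}$, so a gap of size $n^{-1-c}$ is, if anything, easier to rule out there; the bulk, where $n^{-1}$ is the natural scale, is where the estimate is tightest.
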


The property of eigenvectors is also a key ingredient for our proof. In particular, we extensively rely on the following delocalization property (see e.g. \cite{pillai2014universality,bloemendal2014isotropic,ding2018necessary,wang2022optimal})
\begin{lemma}\label{lem:Delocalization}
For any $ \eps>0 $, with overwhelming probability, we have
$$ \max_{1 \leq m \leq p} \|\v_m\|_\infty + \max_{1 \leq i \leq n} \|\u_i\|_\infty \leq n^{-\frac{1}{2}+\eps}. $$
\end{lemma}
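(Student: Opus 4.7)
The plan is to reduce eigenvector delocalization to an isotropic local Marchenko--Pastur law for the resolvent of $\bfH$. Let $G(z) := (\bfH - z)^{-1}$ for $z = E + i\eta$ with $\eta > 0$. The key spectral identity, obtained from the eigendecomposition of $G$, is
$$ \eta \cdot \im G_{\alpha\alpha}(E + i\eta) = \sum_{m=1}^{p} \frac{\eta^2 \, |\v_m(\alpha)|^2}{(E - \lambda_m)^2 + \eta^2}. $$
Setting $E = \lambda_m$ and retaining only the $m$-th summand yields the pointwise bound
$$ |\v_m(\alpha)|^2 \leq \eta \cdot \im G_{\alpha\alpha}(\lambda_m + i\eta). $$
Hence it suffices to control $\im G_{\alpha\alpha}$ at the near-optimal scale $\eta = n^{-1+\eps}$. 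The analogous bound for $\u_i$ reduces, by the same identity applied to $\tbfH = \bfX\bfX^\top$ (equivalently, to the resolvent of $\tbfX$), to entrywise control of the resolvent of $\tbfH$.

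The second step is to invoke the isotropic local Marchenko--Pastur law from \cite{pillai2014universality, bloemendal2014isotropic}: with overwhelming probability, uniformly over all $z = E + i\eta$ in a spectral domain with $\eta \geq n^{-1+\eps}$,
$$ \abs{G_{\alpha\alpha}(z) - m_{\MP}(z)} \leq n^{-\eps/2}, $$
where $m_{\MP}$ denotes the Stieltjes transform of $\rho_{\MP}$. Since $\im m_{\MP}$ remains bounded throughout the bulk and up to the soft edges, this yields $\im G_{\alpha\alpha}(\lambda_m + i\eta) = O(1)$ with overwhelming probability. Substituting into the pointwise bound and taking a union bound over $\alpha \in [p]$ and $m \in [p]$ gives $\max_m \|\v_m\|_\infty \leq n^{-1/2+\eps}$, and an identical argument applied to $\tbfH$ handles $\max_i \|\u_i\|_\infty$.

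The only delicate point is the hard-edge regime $\xi = 1$, where $\rho_{\MP}$ has an integrable singularity at $0$ and the standard isotropic local law degenerates near this edge. This is precisely why the paper restricts either to $\xi \in (0,1)$ strictly or to the exact square case $p \equiv n$, for which a dedicated hard-edge local law (e.g., \cite{wang2022optimal}) supplies the required $\eta$-scale entrywise control of $G$. Apart from this case distinction, the argument is a direct consequence of established random matrix theory inputs and requires no further computation beyond applying the identity above and invoking the local law.
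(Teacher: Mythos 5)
The paper does not prove this lemma; it is stated as a citation to the random matrix theory literature (\cite{pillai2014universality,bloemendal2014isotropic,ding2018necessary,wang2022optimal}). Your derivation is the standard one that appears in those references: the spectral identity $\eta\,\im G_{\alpha\alpha}(\lambda_m+i\eta)\geq|\v_m(\alpha)|^2$, combined with the diagonal entrywise local law at scale $\eta=n^{-1+\eps}$ showing $\im G_{\alpha\alpha}=O(1)$, immediately gives $|\v_m(\alpha)|\leq n^{-1/2+\eps}$, and a union bound over $(\alpha,m)$ finishes. The steps are correct, and your remark about the hard-edge case $\xi=1$ (and the restriction $p\equiv n$ with the specialized input from \cite{wang2022optimal}) correctly identifies why the paper imposes that hypothesis. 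Two small observations. First, you invoke the \emph{isotropic} local law, but only diagonal entries $G_{\alpha\alpha}$ are used, so the weaker entrywise local law already suffices. Second, one needs eigenvalue rigidity to guarantee that the random point $\lambda_m+i\eta$ actually lies in the spectral domain where the local law is uniform; this is standard but should be mentioned. Finally, as a caveat that reflects on the lemma's phrasing rather than your argument: when $p<n$ the matrix $\tbfH$ has an $(n-p)$-dimensional kernel, and the vectors $\u_{p+1},\dots,\u_n$ correspond to the isolated zero eigenvalue, which the bulk/edge local law does not reach; the argument you (and the cited references) give controls only $\u_1,\dots,\u_p$. Since the paper never uses $\u_i$ for $i>p$, this does not affect anything downstream, but the index range in the lemma is stated more broadly than what the local-law route delivers.
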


\section{Proofs for the Sensitivity Regime}\label{sec:Sensitivity}

\subsection{Sensitivity analysis for neighboring data matrices}

As a first step, we will first show that resampling of a single entry has little perturbation effect on the top eigenvectors. This will be helpful to control the single entry resampling term in the variance formula (see Lemma \ref{l.VarianceBound}).

For any fixed $ 1 \leq i \leq n $ and $ 1 \leq \alpha \leq p $, let $ \bfX_{(i,\alpha)} $ be the matrix obtained from $ \bfX $ by replacing the $ (i,\alpha) $ entry $ \bfX_{i\alpha} $ with $ \bfX_{i\alpha}' $. Define the corresponding covariance matrix $ \bfH_{(i,\alpha)} := \bfX_{(i,\alpha)}^\top \bfX_{(i,\alpha)} $, and use $ \v^{(i,\alpha)} $ to denote its unit top eigenvector. Similarly, we denote by $ \u^{(i,\alpha)} $ the unit top eigenvector of $ \widehat{\bfH}_{(i,\alpha)} := \bfX_{(i,\alpha)} \bfX_{(i,\alpha)}^\top $.

\begin{lemma}\label{lem:Single}
Let $ c>0 $ small and $ 0<\delta < \frac{1}{2} - c $. For all $ 1 \leq i \leq n $ and $ 1 \leq \alpha \leq p $, on the event $ \{\lambda_1 - \lambda_2 \geq n^{-1-c}\} $, with overwhelming probability
\begin{equation}\label{eq:v_single}
\max_{i,\alpha} \min_{s \in \sth{\pm 1}} \|\v - s \v^{(i,\alpha)}\|_\infty \leq n^{-\frac{1}{2}-\delta}
\end{equation}
and similarly
$$ \max_{i,\alpha} \min_{s \in \sth{\pm 1}} \| \u - s \u^{(i,\alpha)} \|_\infty \leq n^{-\frac{1}{2}-\delta} $$
\end{lemma}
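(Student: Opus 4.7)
The plan is a first-order rank-$2$ eigenvector perturbation analysis, combined with eigenvector delocalization (Lemma~\ref{lem:Delocalization}) and the eigenvalue gap assumption (Lemma~\ref{lem:Gap}). Since $\bfX^{(i,\alpha)}$ and $\bfX$ differ only at the single entry $(i,\alpha)$, the perturbation $\bfE := \bfH^{(i,\alpha)} - \bfH$ has a very explicit low-rank form.

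First, I would record this explicit form. Setting $a := \bfX'_{i,\alpha} - \bfX_{i,\alpha}$ and writing $\mathbf{e}_i \in \R^n$, $\mathbf{e}_\alpha \in \R^p$ for standard basis vectors,
\begin{equation*}
\bfE = a (\bfX^\top \mathbf{e}_i) \mathbf{e}_\alpha^\top + a \mathbf{e}_\alpha \mathbf{e}_i^\top \bfX + a^2 \mathbf{e}_\alpha \mathbf{e}_\alpha^\top
\end{equation*}
is a symmetric matrix of rank at most $2$, supported on $\mathrm{span}\{\mathbf{e}_\alpha, \bfX^\top \mathbf{e}_i\}$. The sub-exponential tail \eqref{e.Assumption2} and the concentration $\|\bfX^\top \mathbf{e}_i\|_2 = O(1)$ give $|a| \leq n^{-1/2+\eps}$ and $\|\bfE\|_{\mathrm{op}} = O(n^{-1/2+\eps})$ with overwhelming probability. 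The min--max principle with trial vector $\v$ yields $\lambda^{(i,\alpha)} \geq \lambda + \v^\top \bfE \v \geq \lambda - O(n^{-3/2+\eps})$, so that on $\{\lambda_1 - \lambda_2 \geq n^{-1-c}\}$ one obtains $\lambda^{(i,\alpha)} - \lambda_2 \geq \tfrac{1}{2} n^{-1-c}$ for $n$ large.

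Next, I would expand $\v^{(i,\alpha)} = \sum_j c_j \v_j$ in the eigenbasis of $\bfH$ with $c_j = \langle \v_j, \v^{(i,\alpha)}\rangle$, and use the eigenvalue equation $(\bfH + \bfE)\v^{(i,\alpha)} = \lambda^{(i,\alpha)}\v^{(i,\alpha)}$ to derive the standard perturbation formula
\begin{equation*}
c_j = \frac{\langle \v_j, \bfE \v^{(i,\alpha)}\rangle}{\lambda^{(i,\alpha)} - \lambda_j}, \qquad j \geq 2.
\end{equation*}
Choosing $s = \mathrm{sign}(c_1)$ and using $|c_1 - s| \leq \tfrac{1}{2}\sum_{j \geq 2} c_j^2$ together with delocalization $\|\v_j\|_\infty \leq n^{-1/2+\eps}$, it suffices to bound $\sum_{j \geq 2} |c_j|\,\|\v_j\|_\infty$. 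The key observation is the \emph{uniform} numerator bound $|\langle \v_j, \bfE \v^{(i,\alpha)}\rangle| = O(n^{-3/2+\eps})$ for all $j \geq 2$: expanding via the rank-$2$ form of $\bfE$ and using the identities $\langle \v_j, \bfX^\top \mathbf{e}_i\rangle = \sigma_j \u_j(i)$ and $\mathbf{e}_i^\top \bfX\v^{(i,\alpha)} = \sigma^{(i,\alpha)}\u^{(i,\alpha)}(i) - a\v^{(i,\alpha)}(\alpha)$ (the latter from the singular vector equation for $\bfX^{(i,\alpha)}$), every resulting term contains at least three delocalization factors of size $n^{-1/2+\eps}$. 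For the denominators, the gap lower bound controls $j=2$, while for $j \geq 3$ the rigidity \eqref{eq:Rigidity} combined with the square-root edge behavior of $\rho_{\MP}$ yields $\sum_{j \geq 3} |\lambda^{(i,\alpha)} - \lambda_j|^{-1} = O(n)$. Assembling the bounds gives $\|\v - s\v^{(i,\alpha)}\|_\infty = O(n^{-1+c+\eps})$, which is at most $n^{-1/2-\delta}$ for any $\delta < 1/2 - c$ provided $\eps$ is chosen small enough.

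The main obstacle is the $j = 2$ term: because $\|\bfE\|_{\mathrm{op}}$ is much larger than the gap $n^{-1-c}$, a naive Davis--Kahan bound is vacuous. The resolution is to exploit both the low-dimensional support of $\bfE$ and the delocalization of \emph{all} relevant eigenvectors, including those of the perturbed matrix $\bfH^{(i,\alpha)}$, which inherit the delocalization bound via Lemma~\ref{lem:Delocalization} and a union bound over the $np$ pairs $(i,\alpha)$. The analogous bound for $\u$ is obtained by running the same argument on the companion matrix $\tbfH = \bfX\bfX^\top$, whose corresponding perturbation has the same rank-$\leq 2$ structure.
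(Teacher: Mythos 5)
Your proposal takes essentially the same route as the paper: bound the eigenvalue shift $\lambda^{(i,\alpha)}-\lambda$ via the min--max principle and delocalization, expand $\v^{(i,\alpha)}$ in the eigenbasis of $\bfH$, use the rank-two structure of $\bfE$ together with delocalization of both families of eigenvectors to get the uniform numerator bound $O(n^{-3/2+\eps})$, and then control the denominators by the gap hypothesis near the top and rigidity in the bulk. The one place where your proposal is imprecise is the claim that, for $j\ge 3$ alone, rigidity \eqref{eq:Rigidity} gives $\sum_{j\ge 3}|\lambda^{(i,\alpha)}-\lambda_j|^{-1}=O(n)$. For $j$ in a polylogarithmic window near the top (roughly $3\le j\lesssim \varphi^{3/4}$), the rigidity error $\varphi^{1/2}n^{-2/3}j^{-1/3}$ is comparable to the typical spacing $\gamma_1-\gamma_j\asymp j^{2/3}n^{-2/3}$, so rigidity gives no lower bound on $\lambda_1-\lambda_j$ there, and one must fall back on the gap hypothesis via monotonicity ($\lambda_1-\lambda_j\ge\lambda_1-\lambda_2\ge n^{-1-c}$), just as the paper does by splitting the index range at $n^\omega$ in \eqref{eq:Eigenvalue_Diff}--\eqref{eq:Coefficient_Bound}. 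This changes your sum estimate from $O(n)$ to $O(n^{1+c+o(1)})$, which still yields $\|\v-s\v^{(i,\alpha)}\|_\infty\lesssim n^{-1+c+o(1)}$ and hence the stated conclusion, so the proof closes after this correction; you should just not attribute the entire $j\ge3$ control to rigidity.
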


\begin{proof}
Let $ \lambda_1^{(i,\alpha)} \geq \cdots \geq \lambda_p^{(1,\alpha)} $ denote the eigenvalues of the matrix $ \bfH_{(i,\alpha)} $, and let $ \v_\beta^{(i,\alpha)} $ denote the unit eigenvector associated with the eigenvalue $ \lambda_\beta^{(i,\alpha)} $. Similarly, we define the unit eigenvectors $ \{\u_j^{(i,\alpha)}\} $ for the matrix $ \widehat{\bfH}_{(i,\alpha)} $. Using the variational characterization of the eigenvalues, we have
$$ \lambda_1 \geq \Iprod{\v_1^{(i,\alpha)}}{\bfH \v_1^{(i,\alpha)}} = \lambda_1^{(i,\alpha)} + \Iprod{\v_1^{(i,\alpha)}}{ (\bfH - \bfH_{(i,\alpha)}) \v_1^{(i,\alpha)}}. $$
Since $ \bfX $ and $ \bfX_{(i,\alpha)} $ differ only at the $ (i,\alpha) $ entry, we have
\begin{equation*}
(\bfH - \bfH_{(i,\alpha)})_{\beta \gamma} = (\bfX^\top \bfX - \bfX_{(i,\alpha)}^\top \bfX_{(i,\alpha)})_{\beta \gamma} = \begin{cases}
(\bfX_{i\alpha} - \bfX_{i\alpha}')\bfX_{i\gamma}\ \ &\mbox{if}\ \beta=\alpha, \gamma \neq \alpha,\\
(\bfX_{i\alpha} - \bfX_{i\alpha}')\bfX_{i\beta}\ \ &\mbox{if}\ \beta \neq \alpha, \gamma=\alpha,\\
\bfX_{i\alpha}^2 - (\bfX_{i\alpha}')^2\ \ &\mbox{if}\ \beta=\alpha,\gamma=\alpha,\\
0\ \ &\mbox{otherwise}.
\end{cases}
\end{equation*}
Thus, setting $ \Delta_{i\alpha} := \bfX_{i\alpha} - \bfX_{i\alpha}' $, we have
\begin{equation}\label{eq:H_Diff_Bilinear}
\begin{aligned}
& \Iprod{\v_1^{(i,\alpha)}}{ (\bfH - \bfH_{(i,\alpha)}) \v_1^{(i,\alpha)}}\\
& \quad = 2 \v_1^{(i,\alpha)}(\alpha) \Delta_{i\alpha} \pth{\sum_{\beta=1}^p (\bfX_{(i,\alpha)})_{i\beta} \v_1^{(i,\alpha)}(\beta) - \bfX_{i\alpha}' \v_1^{(i,\alpha)}(\alpha) } + \pth{\v_1^{(i,\alpha)}(\alpha)}^2 (\bfX_{i\alpha}^2 - (\bfX_{i\alpha}')^2)\\
& \quad = 2 \v_1^{(i,\alpha)}(\alpha) \Delta_{i\alpha} \pth{\bfX_{(i,\alpha)} \v_1^{(i,\alpha)}}(i) + \pth{\v_1^{(i,\alpha)}(\alpha)}^2 \pth{\bfX_{i\alpha}^2 - (\bfX_{i\alpha}')^2 - 2(\bfX_{i\alpha} - \bfX_{i\alpha}') \bfX_{i\alpha}' }\\
& \quad = 2 \sqrt{\lambda_1^{(i,\alpha)}} \v_1^{(i,\alpha)}(\alpha) \u_1^{(i,\alpha)}(i) \Delta_{i\alpha} + \pth{\v_1^{(i,\alpha)}(\alpha)^2} \Delta_{i\alpha}^2.
\end{aligned}
\end{equation}
This gives us
\begin{equation}\label{eq:lambda-lambda_ialpha}
\lambda_1 \geq \lambda_1^{(i,\alpha)} - 2 \sqrt{\lambda_1^{(i,\alpha)}} \pth{|\bfX_{i\alpha}| + |\bfX_{i\alpha}'|} \|\v_1^{(i,\alpha)}\|_\infty \|\u_1^{(i,\alpha)}\|_\infty - \pth{|\bfX_{i\alpha}| + |\bfX_{i\alpha}'|}^2 \|\v_1^{(i,\alpha)}\|_\infty^2.
\end{equation}
Similarly,
\begin{equation}\label{eq:lambda_ialpha-lambda}
\lambda_1^{(i,\alpha)} \geq \lambda_1 - 2 \sqrt{\lambda_1} \pth{|\bfX_{i\alpha}| + |\bfX_{i\alpha}'|} \|\v_1\|_\infty \|\u_1\|_\infty - \pth{|\bfX_{i\alpha}| + |\bfX_{i\alpha}'|}^2 \|\v_1\|_\infty^2.
\end{equation}
From the sub-exponential decay assumption \eqref{e.Assumption2}, we know $ |\bfX_{i\alpha}|,|\bfX_{i\alpha}'| \leq n^{-1/2+\eps} $ with overwhelming probability for any $ \eps>0 $. Moreover, by the delocalization of eigenvectors, with overwhelming probability, we have
$$ \max \pth{ \|\v_1\|_\infty, \|\u_1\|_\infty, \|\v_1^{(i,\alpha)}\|_\infty, \|\u_1^{(i,\alpha)}\|_\infty } \leq n^{-\frac{1}{2}+\eps} $$
Moreover, by the rigidity estimates \eqref{eq:Rigidity}, with overwhelming probability we have
$$ |\lambda_1 - \lambda_+| \leq n^{-\frac{2}{3}+\eps},\ \ |\lambda_1^{(i,\alpha)} - \lambda_+| \leq n^{-\frac{2}{3} + \eps} $$
Therefore, combining with a union bound, the above two inequalities \eqref{eq:lambda-lambda_ialpha} and \eqref{eq:lambda_ialpha-lambda} together yield
\begin{equation}\label{eq:lambda_single_diff}
\max_{1 \leq i \leq n, 1 \leq \alpha \leq p} |\lambda_1 - \lambda_1^{(i,\alpha)}| \leq n^{-3/2 + 3\eps}
\end{equation}
with overwhelming probability.

We write $ \v_1^{(i,\alpha)} $ in the orthonormal basis of eigenvectors $ \sth{\v_\beta} $:
$$ \v_1^{(i,\alpha)} = \sum_{\beta=1}^p a_\beta \v_\beta.  $$
Using this representation and the spectral theorem,
$$  \sum_{\beta=1}^p \lambda_\beta a_\beta \v_\beta = \bfH \v_1^{(i,\alpha)} = \pth{\bfH - \bfH_{(i,\alpha)}} \v_1^{(i,\alpha)} + \pth{\lambda_1^{(i,\alpha)} - \lambda_1} \v_1^{(i,\alpha)} + \lambda_1 \v_1^{(i,\alpha)}. $$
As a consequence,
$$ \lambda_1 \v_1^{(i,\alpha)} = \sum_{\beta=1}^p \lambda_\beta \a_\beta \v_\beta + \pth{\bfH_{(i,\alpha)} - \bfH} \v_1^{(i,\alpha)} + \pth{\lambda_1 - \lambda_1^{(i,\alpha)}} \v_1^{(i,\alpha)}. $$
For $ \beta \neq 1 $, taking inner product with $ \v_\beta $ yields
$$ \lambda_1 a_\beta = \Iprod{\v_\beta}{\lambda_1 \v_1^{(i,\alpha)}} = \lambda_\beta a_\beta + \Iprod{\v_\beta}{(\bfH_{(i,\alpha)} - \bfH) \v_1^{(i,\alpha)}} + (\lambda_1 - \lambda_1^{(i,\alpha)}) a_\beta, $$
which implies
\begin{equation}\label{eq:Coefficient}
\pth{(\lambda_1 - \lambda_\beta) + (\lambda_1^{(i,\alpha)} - \lambda_1)} a_\beta = \Iprod{\v_\beta}{(\bfH_{(i,\alpha)} - \bfH) \v_1^{(i,\alpha)}}.
\end{equation}
By a similar computation as in \eqref{eq:H_Diff_Bilinear}, we have
\begin{equation}\label{eq:H_Diff_Bilinear_Bound}
\begin{aligned}
\abs{ \Iprod{\v_\beta}{(\bfH_{(i,\alpha)} - \bfH) \v_1^{(i,\alpha)}} } &= \abs{ \Delta_{i\alpha} \pth{ \sqrt{\lambda_1^{(i,\alpha)}} \v_\beta(\alpha) \u_1^{(i,\alpha)}(i) + \sqrt{\lambda_\beta} \v_1^{(i,\alpha)}(\alpha) \u_\beta(i) } }\\
&\lesssim \pth{|\bfX_{i\alpha}| + |\bfX_{i\alpha}'|} \pth{ \|\v_\beta\|_\infty \|\u_1^{(i,\alpha)}\|_\infty + \|\v_1^{(i,\alpha)}\|_\infty \|\u_\beta\|_\infty }\\
&\leq n^{-\frac{3}{2} + 3 \eps}
\end{aligned}
\end{equation}
with overwhelming probability, where the second step follows from rigidity of eigenvalues and the last step follows from the sub-exponential decay assumption and delocalization of eigenvectors.

Consider the event $ \calE := \{\lambda_1 - \lambda_2 \geq n^{-1-c}\} $. Fix some $ \omega>0 $ small. By rigidity of eigenvalues \eqref{eq:Rigidity}, on the event $ \calE $, with overwhelming probability
\begin{equation}\label{eq:Eigenvalue_Diff}
\lambda_1 - \lambda_\beta \gtrsim 
\begin{cases}
n^{-1-c} & \mbox{if}\ \ 2 \leq \beta \leq n^\omega,\\
\beta^{2/3} n^{-2/3} & \mbox{if}\ \ n^\omega < \beta \leq p.
\end{cases}
\end{equation}
On the event $ \calE $, using \eqref{eq:lambda_single_diff}, \eqref{eq:H_Diff_Bilinear_Bound} and \eqref{eq:Eigenvalue_Diff}, with overwhelming probability we have
\begin{equation}\label{eq:Coefficient_Bound}
|a_\beta| \lesssim
\begin{cases}
n^{-\frac{1}{2} + c + 3\eps} & \mbox{if}\ \ 2 \leq \beta \leq n^\omega,\\
\beta^{-\frac{2}{3}} n^{-\frac{5}{6} + 3\eps} & \mbox{if}\ \ n^\omega < \beta \leq p.
\end{cases}
\end{equation}
Choose $ s = a_1/|a_1| $. Note that $ 1-|a_1| \leq \sum_{\beta=2}^p |a_\beta| $.
Thanks to the delocalization of eigenvectors, with overwhelming probability, we have
\begin{equation*}
\|s \v_1 - \v_1^{(i,\alpha)}\|_\infty =  \bigg\| (s-a_1)\v_1 + \sum_{\beta=2}^p a_\beta \v_\beta \bigg\|_\infty \leq (1-|a_1|)\|\v_1\|_\infty + \sum_{\beta=2}^p |a_\beta| \|\v_\beta\|_\infty \leq n^{-\frac{1}{2}+\eps} \sum_{\beta=2}^p |a_\beta|.
\end{equation*}
Thus, on the event $ \calE $, it follows from \eqref{eq:Eigenvalue_Diff} that
\begin{align*}
\|s \v_1 - \v_1^{(i,\alpha)}\|_\infty &\lesssim n^{-\frac{1}{2} + \eps} \bigg( n^{-\frac{1}{2}+3\eps + c +\omega} + n^{-\frac{5}{6} + 3\eps} \sum_{\beta=n^\omega}^p \beta^{-\frac{2}{3}} \bigg)\\
&\lesssim n^{-1+4\eps+c+\omega} + n^{-1+4\eps}.
\end{align*}
Choosing $ \eps $ and $ \omega $ small enough so that $ 4\eps+c+\omega < \frac{1}{2} $, we conclude that \eqref{eq:v_single} is true.

A similar bound for $ \u $ can be shown by the same arguments for $ \widehat{\bfH}=\bfX \bfX^\top $. Hence, we have shown the desired results.
\end{proof}

\subsection{Proof of Theorem \ref{thm:main1}}\label{sec:Proof_Sensitivity}

Now we are ready to prove the resampling sensitivity.

Let $ \bfX'' \in \R^{n \times p} $ be a copy of $ \bfX $ that is independent of $ \bfX $ and $ \bfX' $. For an arbitrary index $ (i,\alpha) $ with $ 1 \leq i \leq n $ and $ 1 \leq \alpha \leq p $, we introduce another random matrix $ \bfY_{(i,\alpha)} $ obtained from $ \bfX $ by replacing the $ (i,\alpha) $ entry $ \bfX_{i\alpha} $ by $ \bfX_{i\alpha}'' $. Similarly, we denote $ \Yk_{(i,\alpha)} $ the matrix obtained via the same procedure from $ \Xk $. For the matrix $ \Xk $, we do the similar resampling procedure in the following way: if $ (i,\alpha) \in S_k $, then replace $ \Xk_{i\alpha} $ with $ \bfX_{i\alpha}'' $; if $ (i,\alpha) \notin S_k $, then replace $ \Xk_{i\alpha} $ with $ \bfX_{i\alpha}''' $, where $ \bfX''' $ is another independent copy of $ \bfX $, $ \bfX' $ and $ \bfX'' $.

In the following analysis, we choose an index $ (s,\theta) $ uniformly at random from the set of all pairs $ \sth{(i,\alpha): 1 \leq i \leq n,1 \leq \alpha \leq p} $. Let $ \mu $ be the top singular value of $ \bfY_{(s,\theta)} $ and use $ \f \in \R^n $ and $ \g \in \R^p $ to denote the normalized top left and right singular vectors of $ \bfY_{(s,\theta)} $. Similarly, we define $ \Muk $, $ \fk $ and $ \gk $ for $ \Yk_{(s,\theta)} $. We also denote by $ \h $ and $ \hk $ the concatenation of $ \f,\g $ and $ \fk,\gk $, respectively. Finally, let $ \tXk $, $ \tbfY $ and $ \tYk $ be the symmetrization \eqref{eq:Symmetrization} of $ \Xk $, $ \bfY $ and $ \Yk $, respectively. When the context is clear, we will omit the index $ (s,\theta) $ for the convenience of notations.

\paragraph{Step 1.}
By Lemma \ref{l.VarianceBound}, we have
\begin{equation}\label{eq:Variance_Singular}
\frac{2 \Var(\sigma)}{k} \cdot \frac{np+1}{np} \geq \EE \qth{ (\sigma-\mu) \pth{ \Sigk -\Muk } }.
\end{equation}
Using the variational characterization of the top singular value
\begin{equation*}
\Iprod{\f}{\bfX \g} \leq \sigma = \Iprod{\u}{\bfX \v},\ \ \  \Iprod{\u}{\bfY \v} \leq \mu = \Iprod{\f}{\bfY \g}.
\end{equation*}
This implies
\begin{equation}\label{eq:Test_v_g}
\Iprod{\f}{(\bfX - \bfY) \g} \leq \sigma-\mu \leq \Iprod{\u}{(\bfX - \bfY) \v}.
\end{equation}
Applying the same arguments to $ \Xk $ and $ \Yk $, we have
\begin{equation*}
\iprod{\fk}{\pth{\Xk - \Yk} \gk} \leq \Sigk-\Muk \leq \iprod{\uk}{\pth{\Xk - \Yk} \vk}.
\end{equation*}
Since the matrices $ \bfX $ and $ \bfY $ differ only at the $ (s,\theta) $ entry, for any vectors $ \a \in \R^n $ and $ \b \in \R^p $ we have
$$ \Iprod{\a}{(\bfX-\bfY)\b} = \Delta_{s\theta}\,\a(s)\b(\theta),\ \ \iprod{\a}{\pth{\Xk-\Yk}\b} = \Dk_{s\theta}\, \a(s)\b(\theta), $$
where 
\begin{equation*}
\Delta_{s\theta}:=\bfX_{s\theta} - \bfX_{s\theta}'',\ \ \mbox{and}\ \   \Dk_{s\theta} :=
\begin{cases}
\bfX_{s\theta}' -\bfX_{s\theta}'' & \mbox{if}\ (s,\theta) \in S_k,\\
\bfX_{s\theta} - \bfX_{s\theta}''' & \mbox{if}\ (s,\theta) \notin S_k.
\end{cases}
\end{equation*}
Therefore,
$$ \Delta_{s\theta}\, \f(s)\g(\theta) \leq \sigma-\mu \leq \Delta_{s\theta} \u(s)\v(\theta),\ \ \Dk_{s\theta}\, \fk(s)\gk(\theta) \leq \Sigk-\Muk \leq \Dk_{s\theta} \uk(s)\vk(\theta). $$
Consider
$$ T_1 := \pth{ \Delta_{s\theta} \u(s)\v(\theta)} \pth{\Dk_{s\theta} \uk(s)\vk(\theta)},\ \ T_2 := \pth{\Delta_{s\theta}\, \f(s)\g(\theta)} \pth{\Dk_{s\theta}\, \fk(s)\gk(\theta)}, $$
$$ T_3 := \pth{\Delta_{s\theta} \u(s)\v(\theta)} \pth{\Dk_{s\theta}\, \fk(s)\gk(\theta)},\ \ T_4:= \pth{\Delta_{s\theta}\, \f(s)\g(\theta)} \pth{\Dk_{s\theta} \uk(s)\vk(\theta)}. $$
Then we have
\begin{equation}\label{eq:Singular_Diff_Prod}
\min(T_1,T_2,T_3,T_4) \leq (\sigma-\mu)\pth{\Sigk-\Muk} \leq \max(T_1,T_2,T_3,T_4).
\end{equation}

To estimate \eqref{eq:Singular_Diff_Prod}, we introduce the following events
\begin{equation}\label{eq:Event_Delocalization}
\calE_1 := \sth{ \max \pth{ \|\v\|_\infty, \|\u\|_\infty, \|\f\|_\infty, \|\g\|_\infty, \|\vk\|_\infty, \|\uk\|_\infty, \|\fk\|_\infty, \|\gk\|_\infty } \leq n^{-\frac{1}{2}+\eps} },   
\end{equation}
and
\begin{equation}\label{eq:Event_Single_Perturbation}
\calE_2 := \sth{ \max \pth{ \|\v-\g\|_\infty, \|\u-\f\|_\infty, \|\vk-\gk\|_\infty, \|\uk-\fk\|_\infty } \leq n^{-\frac{1}{2}-\delta} }.   
\end{equation}
Define the event $ \calE := \calE_1 \cap \calE_2 $. 
On the event $ \calE $, for all
$$ J \in \sth{ \u(s)\v(\theta)\uk(s)\vk(\theta), \u(s)\v(\theta)\fk(s)\gk(\theta), \f(s)\g(\theta)\uk(s)\vk(\theta), \f(s)\g(\theta)\fk(s)\gk(\theta) } $$
we have
\begin{equation}\label{eq:J_Approx}
\abs{J- \u(s)\v(\theta)\uk(s)\vk(\theta)} = O \pth{ n^{-2-\delta+3\eps} }.
\end{equation}
Let $ T:=\min(T_1,T_2,T_3,T_4) $. On the event $ \calE $, using \eqref{eq:J_Approx} we have
\begin{equation}\label{eq:T_on_E}
T \geq \pth{\Delta_{s\theta} \Dk_{s\theta}} \u(s)\v(\theta)\uk(s)\vk(\theta) - O \pth{  \abs{\Delta_{s\theta} \Dk_{s\theta}} n^{-2-\delta+3\eps} }.
\end{equation}

\paragraph{Step 2.}
Next we claim that the contribution of $ T $ when $ \calE $ does not hold is negligible. Specifically, we have
\begin{equation}\label{eq:T_Complement}
\EE \qth{ T \1_{\calE^c} } = o(n^{-3}).    
\end{equation}
Without loss of generality, it suffices to show that
\begin{equation}\label{eq:T_Complement_Equiv}
\EE \qth{ \Delta_{s\theta} \, \Dk_{s\theta} \u(s)\v(\theta)\uk(s)\vk(\theta) \1_{\calE^c} } = o(n^{-3}).   
\end{equation}
To see this, using $ \1_{\calE^c} = \1_{\calE_1 \backslash \calE} + \1_{\calE_1^c} $, we decompose the expectation into two parts
$$ \EE \qth{ \Delta_{s\theta} \, \Dk_{s\theta} \u(s)\v(\theta)\uk(s)\vk(\theta) \1_{\calE^c} } = I_1 + I_2, $$
where
\begin{equation*}
I_1 := \EE \qth{ \Delta_{s\theta} \, \Dk_{s\theta} \u(s)\v(\theta)\uk(s)\vk(\theta) \1_{\calE_1 \backslash \calE} }, \ \ 
I_2 := \EE \qth{ \Delta_{s\theta} \, \Dk_{s\theta} \u(s)\v(\theta)\uk(s)\vk(\theta) \1_{\calE_1^c} }.    
\end{equation*}
For the first term $ I_1 $, by delocalization and the relation $ \calE_1 \backslash \calE = \calE_1 \backslash \calE_2 $, we have
\begin{equation}\label{eq:I1_E2_Complement}
|I_1| \leq n^{-2+4\eps} \EE \qth{ \abs{\Delta_{s\theta} \, \Dk_{s\theta}} \1_{\calE_1 \backslash \calE_2} } \leq n^{-2+4\eps} \EE \qth{ \abs{\Delta_{s\theta} \, \Dk_{s\theta}} \1_{\calE_2^c} }.
\end{equation}
Note that the random variable $ \Delta_{s\theta} \, \Dk_{s\theta} $ and the event $ \calE_2 $ are dependent. To decouple these variables, we introduce a new event. 
Consider the event $ \calE_3 := \calA \cup \calB $, where
$$ \calA := \sth{ \min \pth{ \sigma_1-\sigma_2 , \Sigk_1 - \Sigk_2 } \geq n^{-1-c} },\ \ \calB := \sth{ \min \pth{ \mu_1-\mu_2 , \Muk_1 - \Muk_2 } \geq n^{-1-c} } $$
Then,
\begin{align*}
\EE \qth{ \abs{\Delta_{s\theta} \, \Dk_{s\theta}} \1_{\calE_3^c} } &\lesssim \EE \qth{ \pth{ \Delta_{s\theta}^2 + (\Dk_{s\theta})^2 } \1_{\calE_3^c} }\\
&\lesssim \EE \qth{ \pth{ \bfX_{s\theta}^2 + (\bfX_{s\theta}')^2 + (\bfX_{s\theta}'')^2 + (\bfX_{s\theta}''')^2 } \1_{\calE_3^c} }\\
&\lesssim \EE \qth{ \pth{ \bfX_{s\theta}^2 + (\bfX_{s\theta}')^2 } \1_{\calB^c} } + \EE \qth{ \pth{ (\bfX_{s\theta}'')^2 + (\bfX_{s\theta}''')^2 } \1_{\calA^c} }.
\end{align*}
Observe that the random variables $ \bfX_{s\theta} $ and $ \bfX_{s\theta}' $ are independent of the event $ \calB $, and the random variable $ \bfX_{s\theta}'' $ is independent of $ \calA $. 
Therefore, by Lemma \ref{lem:Gap},
$$ \EE \qth{ \pth{ \bfX_{s\theta}^2 + (\bfX_{s\theta}')^2 } \1_{\calB^c} } = O(n^{-1-\kappa}),\ \ \   \EE \qth{ \pth{ (\bfX_{s\theta}'')^2 + (\bfX_{s\theta}''')^2 } \1_{\calA^c} } = O(n^{-1-\kappa}). $$
By Lemma \ref{lem:Single}, we have $ \P(\calE_3 \backslash \calE_2) = O(N^{-D}) $ for any fixed large $ D>0 $. Using the Cauchy-Schwarz inequality, we have
\begin{align*}
\EE \qth{ \abs{\Delta_{s\theta} \, \Dk_{s\theta}} \1_{\calE_2^c} } &= \EE \qth{ \abs{\Delta_{s\theta} \, \Dk_{s\theta}} \1_{\calE_3^c} } + \EE \qth{ \abs{\Delta_{s\theta} \, \Dk_{s\theta}} \1_{\calE_3 \backslash \calE_2} }\\
&= O(n^{-1-\kappa}) +  \sqrt{ \EE \qth{\abs{ \Delta_{s\theta} \, \Dk_{s\theta} }^2} } \sqrt{\P(\calE_3 \backslash \calE_2)}\\
&= O(n^{-1-\kappa}) + O(N^{-D})\\
&= O(n^{-1-\kappa}).
\end{align*}
Choosing $ 4\eps <\kappa $, then \eqref{eq:I1_E2_Complement} yields
$$ |I_1| \leq O(n^{-2+4\eps-1-\kappa}) = o(n^{-3}). $$
For the term $ I_2 $, note that $ \u $, $ \v $, $ \uk $ and $ \vk $ are unit vectors. We have that 
$$ \max(\|\u\|_\infty, \|\v\|_\infty, \|\uk\|_\infty, \|\vk\|_\infty) \leq 1. $$
Recall that $ \calE_1 $ holds with overwhelming probability. By the Cauchy-Schwarz inequality, for any large $ D>0 $, we have
$$ |I_2| \leq \EE \qth{ \abs{\Delta_{s\theta} \, \Dk_{s\theta}} \1_{\calE_1^c} } \leq \sqrt{ \EE \qth{\abs{\Delta_{s\theta} \, \Dk_{s\theta}}^2} } \sqrt{\P(\calE_1^c)} = O(N^{-D}). $$
Hence we have shown the desired claim \eqref{eq:T_Complement_Equiv}.

\paragraph{Step 3.}
Combining \eqref{eq:Singular_Diff_Prod}, \eqref{eq:T_on_E} and \eqref{eq:T_Complement}, we obtain
$$ \EE \qth{ (\sigma-\mu) \pth{ \Sigk -\Muk } } \geq \EE \qth{ \Delta_{s\theta} \Dk_{s\theta} \, \u(s)\v(\theta)\uk(s)\vk(\theta) } + o(n^{-3}). $$
Since $ \frac{np+1}{np} \leq 2 $, by \eqref{eq:Variance_Singular} we have
\begin{equation}\label{eq:Inner_Prod_Variance}
\EE \qth{ \Delta_{s\theta} \Dk_{s\theta} \, \u(s)\v(\theta)\uk(s)\vk(\theta) } \leq \frac{4 \Var(\sigma)}{k} + o(n^{-3}).    
\end{equation}
Since the random index $ (s,\theta) $ is uniformly sampled, we have
\begin{equation}\label{eq:Expectation_Index}
\EE \qth{ \Delta_{s\theta} \Dk_{s\theta} \, \u(s)\v(\theta)\uk(s)\vk(\theta) } = \frac{1}{np} \EE \qth{ \sum_{1\leq i \leq n,1\leq \alpha \leq p} \Delta_{i\alpha} \Dk_{i\alpha} \, \u(i)\v(\alpha)\uk(i)\vk(\alpha) }.
\end{equation}
Note that
\begin{equation*}
\Delta_{i\alpha} \Dk_{i\alpha} = 
\begin{cases}
(\bfX_{i\alpha} - \bfX_{i\alpha}')(\bfX_{i\alpha}' - \bfX_{i\alpha}'') & \mbox{if}\ (i,\alpha) \in S_k,\\
(\bfX_{i\alpha} - \bfX_{i\alpha}')(\bfX_{i\alpha} - \bfX_{i\alpha}''') & \mbox{if}\ (i,\alpha) \notin S_k.
\end{cases}
\end{equation*}
In either case, we have $ \EE [\Delta_{i\alpha} \Dk_{i\alpha} ] = p^{-1} $. Therefore,
\begin{equation*}
\sum_{1 \leq i \leq n,1 \leq \alpha \leq p} \EE \qth{\Delta_{i\alpha} \Dk_{i\alpha} \left| \right. S_k} \u(i)\v(\alpha)\uk(i)\vk(\alpha) = \frac{1}{p} \Iprod{\v}{\vk} \Iprod{\u}{\uk}.    
\end{equation*}
Consequently, this implies
\begin{equation}\label{eq:Inner_Prod_Formula}
\EE \qth{ \sum_{1 \leq i \leq n,1 \leq \alpha \leq p} \EE \qth{\Delta_{i\alpha} \Dk_{i\alpha} \left.\right| S_k} \u(i)\v(\alpha)\uk(i)\vk(\alpha) } = \frac{1}{p} \EE \qth{ \Iprod{\v}{\vk} \Iprod{\u}{\uk} }.   
\end{equation}
Moreover, we claim that
\begin{equation}\label{eq:Inner_Prod_Conditional_Error}
\EE \qth{ \sum_{1 \leq i \leq n,1 \leq \alpha \leq p} \pth{ \Delta_{i\alpha} \Dk_{i\alpha} - \EE \qth{\Delta_{i\alpha} \Dk_{i\alpha} \left.\right| S_k} } \u(i)\v(\alpha)\uk(i)\vk(\alpha) } =o(n^{-1}).   
\end{equation}
For the ease of notations, we set $ \Xi_{i\alpha} := \Delta_{i\alpha} \Dk_{i\alpha} - \EE [\Delta_{i\alpha} \Dk_{i\alpha} | S_k]  $. It suffices to show that for all pairs $ (i,\alpha) $ we have
\begin{equation}\label{eq:Inner_Prod_Conditional_Equiv1}
\EE \qth{ \Xi_{i\alpha} \u(i)\v(\alpha)\uk(i)\vk(\alpha) } = o(n^{-3}).
\end{equation}
To see this, we introduce another copy of $ \bfX $, denoted by $ \bfX'''' $, which is independent of all previous random variables $ (\bfX,\bfX',\bfX'',\bfX''') $. For an arbitrarily fixed index $ (i,\alpha) $, we define matrices $ \hatX_{(i,\alpha)} $ and $ \hatXk_{(i,\alpha)} $ by resampling the $ (i,\alpha) $ entry of $ \bfX $ and $ \Xk $ with $ \bfX_{i\alpha}'''' $. Let $ \hatu $, $ \hatv $ be the left and right top singular vector of $ \hatX $, and similarly $ \hatuk $, $ \hatvk $ for $ \hatXk $. Define
$$ \psi_{i\alpha} := \u(i)\v(\alpha)\uk(i)\vk(\alpha),\ \ \hatpsi_{i\alpha} := \hatu(i)\hatv(\alpha)\hatuk(i)\hatvk(\alpha). $$
A crucial observation is that $ \Xi_{i\alpha} $ and $ \hatpsi_{i\alpha} $ are independent. This is because, conditioned on $ S_k $, the matrices $ \hatX $ and $ \hatXk $ are independent of $ (\bfX_{i\alpha},\bfX_{i\alpha}',\bfX_{i\alpha}'',\bfX_{i\alpha}''') $. Such a conditional independence is also true for the singular vectors, and hence also holds for $ \hatpsi_{i\alpha} $. On the other hand, by definition, the variable $ \Xi_{i\alpha} $ only depends on $ (\bfX_{i\alpha},\bfX_{i\alpha}',\bfX_{i\alpha}'',\bfX_{i\alpha}''') $. Therefore,
$$ \EE \qth{\Xi_{i\alpha} \hatpsi_{i\alpha}} = \EE \qth{ \EE [ \Xi_{i\alpha} | S_k ] \,  \EE [ \hatpsi_{i\alpha} | S_k ] }  =0 $$
Thus, we reduce \eqref{eq:Inner_Prod_Conditional_Equiv1} to showing
\begin{equation}\label{eq:Inner_Prod_Conditional__Equiv2}
\EE \qth{ \Xi_{i\alpha} \pth{\psi_{i\alpha} - \hatpsi_{i\alpha}} } = o(n^{-3}).   
\end{equation}
The proof of \eqref{eq:Inner_Prod_Conditional__Equiv2} is similar as previous arguments. Consider the events
$$ \hat{\calE}_1 := \sth{ \max \pth{ \|\v\|_\infty, \|\u\|_\infty, \|\hatu\|_\infty, \|\hatv\|_\infty, \|\vk\|_\infty, \|\uk\|_\infty, \|\hatuk\|_\infty, \|\hatvk\|_\infty } \leq n^{-\frac{1}{2}+\eps} }, $$
$$ \hat{\calE}_2 := \sth{ \max \pth{ \|\v-\hatv\|_\infty, \|\u-\hatu\|_\infty, \|\vk-\hatvk\|_\infty, \|\uk-\hatuk\|_\infty } \leq n^{-\frac{1}{2}-\delta} }. $$
On the event $ \hat{\calE} := \hat{\calE}_1 \cap \hat{\calE}_2 $, we have $ |\psi_{i\alpha} - \hatpsi_{i\alpha}| = O(n^{-2-\delta+3\eps}) $. Note that $ \EE [|\Xi_{i\alpha}|] = O(n^{-1}) $ since $ \EE [|\Delta_{i\alpha} \Dk_{i\alpha}|] = O(n^{-1}) $. As a consequence,
\begin{equation}\label{eq:Xi_On}
\EE \qth{ \abs{\Xi_{i\alpha} (\psi_{i\alpha} - \hatpsi_{i\alpha}) } \1_{\hat{\calE}} } = O(n^{-3-\delta+3\eps}) = o(n^{-3}).
\end{equation}
Using the same argument as in \eqref{eq:T_Complement_Equiv}, we have
\begin{equation}\label{eq:Xi_Complement}
\EE \qth{ \abs{\Xi_{i\alpha} (\psi_{i\alpha} - \hatpsi_{i\alpha}) } \1_{\hat{\calE}^c} } \lesssim N^{-2+4\eps} \EE \qth{|\Xi_{i\alpha}| \1_{\hat{\calE}^c}} = O(N^{-2+4\eps} N^{-1-\kappa}) = o(n^{-3}),
\end{equation}
where $ \kappa $ is the constant in the gap property (Lemma \ref{lem:Gap})
Thus, by \eqref{eq:Xi_On} and \eqref{eq:Xi_Complement}, we have shown the desired claim \eqref{eq:Inner_Prod_Conditional__Equiv2}.

Based on \eqref{eq:Inner_Prod_Variance} and \eqref{eq:Expectation_Index}, combining \eqref{eq:Inner_Prod_Formula} and \eqref{eq:Inner_Prod_Conditional_Error} yields
$$ \frac{1}{np^2} \EE \qth{  \Iprod{\v}{\vk} \Iprod{\u}{\uk}  } \leq \frac{4 \Var(\sigma)}{k} + o\pth{\frac{1}{n^3}} + o\pth{\frac{1}{n^2 p}} $$
By Lemma \ref{lem:Tracy-Widom} and the assumption $ k \gg n^{5/3} $, we have
$$ \EE \qth{  \Iprod{\v}{\vk} \Iprod{\u}{\uk}  } \leq \frac{np^2}{k} O(n^{-4/3}) + o(1) = o(1). $$
This implies
\begin{equation}\label{eq:Cross_Inner_Prod}
\EE \qth{ | \Iprod{\v}{\vk} \Iprod{\u}{\uk} | } \to 0.
\end{equation}



\paragraph{Step 4.}
Consider the symmetrization matrix $ \tbfX $ defined in \eqref{eq:Symmetrization}. The variational representation of the top eigenvalue yields
$$ \sigma = \frac{\Iprod{\w}{\tbfX \w}}{\|\w\|_2^2}, \ \ \Sigk = \frac{\Iprod{\wk}{\tXk \wk}}{\|\wk\|_2^2}\ \ \mbox{with}\ \|\w\|_2^2 = \|\wk\|_2^2 =2. $$
Using the same arguments as in Step 1-3, we can conclude that
$$ \EE \qth{ | \Iprod{\w}{\wk} |^2 } = \EE \qth{ |\Iprod{\v}{\vk} + \Iprod{\u}{\uk}|^2 }  \to 0 $$
Combined with \eqref{eq:Cross_Inner_Prod}, this gives us
$$ \EE \qth{ |\Iprod{\v}{\vk}|^2 + |\Iprod{\u}{\uk}|^2 } \to 0, $$
which proves the desired results.

\section{Proofs for the Stability Regime}\label{sec:Stability}




Throughout the whole section, we will focus on the behaviour of $ \v $ and $ \vk $. Similar results also hold for $ \u $ and $ \uk $ via the same arguments.

\subsection{Linearization and local law of resolvent}

As mentioned in the introduction, in certain cases it would be more convenient to consider the symmetrization $ \tbfX $ of the matrix $ \bfX $ (defined as in \eqref{eq:Symmetrization}) when studying its spectral properties. For $ z \in \C $ with $ \im z>0 $,
We introduce the resolvent of this symmetrization
\begin{equation}\label{eq:Resolvent}
\bfR(z) := \left(
\begin{matrix}
-\bfI_{n} & \bfX\\
\bfX^\top & -z \bfI_{p}
\end{matrix}
\right)^{-1}.    
\end{equation}
Note that $ \bfR(z) $ is not the conventional definition of the resolvent matrix, but we still call it resolvent for convenience.
For the ease of notations, we will relabel the indices in $ \bfR $ in the following way:
\begin{definition}[Index sets]
We define the index sets
$$ \calI_1 := \sth{1,\dots,n},\ \ \calI_2 := \sth{1,\dots,p},\ \ \calI := \calI_1 \cup \sth{n+\alpha:\alpha \in \calI_2}. $$
For a general matrix $ \mathbf{M} \in \R^{|\calI| \times |\calI|} $, we label the indices of the matrix elements in the following way: for $ a,b \in \calI $, if $ 1 \leq a,b \leq n $, then typically we use Latin letters $ i,j $ to represent them; if $ n+1 \leq a,b \leq n+p $, we use the corresponding Greek letters $ \alpha = a-n $ and $ \beta = b-n $ to represent them.
\end{definition}

The resolvent $ \bfR $ is closely related to the eigenvalue and eigenvectors of the covariance matrix. As discussed in \cite{ding2018necessary}[Equation (3.9),(3.10)], we have
\begin{equation}\label{eq:Resolvent_Spectral}
\bfR_{ij}(z) = \sum_{\ell=1}^n \frac{z \u_\ell(i) \u_\ell(j)}{\lambda_\ell - z},\ \ \bfR_{\alpha\beta}(z) = \sum_{\ell=1}^p \frac{\v_\ell(\alpha) \v_\ell(\beta)}{\lambda_\ell - z},    
\end{equation}
and
$$ \bfR_{i\alpha}(z) = \sum_{\ell=1}^p \frac{\sqrt{\lambda_\ell} \u_\ell(i) \v_\ell(\alpha) }{\lambda_\ell - z},\ \ \bfR_{\alpha i}(z) = \sum_{\ell=1}^p \frac{\sqrt{\lambda_\ell} \v_\ell(\alpha) \u_\ell(i)}{\lambda_\ell -z}. $$
An important result is the local Marchenko-Pastur law for the resolvent matrix $ \bfR $. This was first proved in \cite{ding2018necessary}[Lemma 3.11], and we also refer to \cite{hwang2019local}[Proposition 2.13] for a version that can be directly applied. Specifically, the resolvent matrix $ \bfR $ has a deterministic limit, defined by
\begin{equation}
\bfG(z) :=
\left(
\begin{matrix}
-(1+m_\MP(z))^{-1} \bfI_{n} & 0\\
0 & m_\MP(z) \bfI_p
\end{matrix}
\right),
\end{equation}
where $ m_\MP(z) $ is the Stieltjes transform of the Marchenko-Pasture law \eqref{eq:MP_Law}, given by
$$ m_{\MP}(z) := \int_{\R} \dfrac{\rho_{\MP}(x)}{x-z}\d x=\dfrac{1-\xi-z+\sqrt{(z-\lambda_-)(z-\lambda_+)}}{2\xi z}, $$
where $ \sqrt{\cdot} $ denotes the square root on the complex plane whose branch cut is the negative real line. With this choice we always have $ \im m_{\MP}(z)>0 $ when $ \im z>0 $.

To state the local law, we will focus on the spectral domain
\begin{equation}\label{eq:Spectral_Domain}
\calS := 
\begin{cases}
\sth{E+\ii \eta : \frac{\lambda_-}{2} \leq E \leq \lambda_++1, 0 <\eta<3 } & \mbox{if}\ 0<\xi<1.\\
\sth{E+\ii \eta : \frac{1}{10} \leq E \leq \lambda_++1, 0<\eta<3 } & \mbox{if}\ \xi=1.
\end{cases}
\end{equation}

\begin{lemma}[Local Marchenko-Pastur law]\label{lem:Local_law}
For any $ \eps>0 $, the following estimate holds With overwhelming probability uniformly for $ z \in \calS $,
\begin{equation}\label{eq:Local_law}
\max_{a,b \in \calI} \abs{\bfR_{ab}(z) - \bfG_{ab}(z)} \leq n^\eps \pth{\sqrt{\frac{\im m_\MP(z)}{n \eta}} + \frac{1}{n\eta}}.    
\end{equation}
\end{lemma}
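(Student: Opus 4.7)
The plan is to prove Lemma \ref{lem:Local_law} via the Schur complement plus self-consistent equation approach that is by now standard in the local Marchenko--Pastur law literature; I would essentially reproduce the argument in \cite{ding2018necessary,hwang2019local}. The linearization $\bfR(z)$ is advantageous because the matrix inside the inverse depends linearly on $\bfX$, so Schur complement manipulations cleanly isolate one row at a time and expose the independence structure of the entries.

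First, for each $a \in \calI$ I would apply the Schur complement formula to the block matrix inside the inverse defining $\bfR$. For $i \in \calI_1$ this produces
$$\bfR_{ii}(z)^{-1} = -1 - \sum_{\alpha,\beta \in \calI_2} \bfX_{i\alpha} \pth{\bfR^{(i)}(z)}_{\alpha\beta} \bfX_{i\beta},$$
where $\bfR^{(i)}$ is the analogous resolvent with row/column $i$ removed; a symmetric identity holds for $\alpha \in \calI_2$. Since $\bfR^{(i)}$ is independent of the $i$-th row of $\bfX$, a Hanson--Wright type large deviation bound for quadratic forms (valid under the sub-exponential tails in \eqref{e.Assumption2}) gives, with overwhelming probability,
$$\sum_{\alpha,\beta \in \calI_2} \bfX_{i\alpha} \pth{\bfR^{(i)}(z)}_{\alpha\beta} \bfX_{i\beta} = \frac{1}{p}\sum_{\alpha \in \calI_2} \pth{\bfR^{(i)}(z)}_{\alpha\alpha} + O\pth{n^\eps \Psi(z)},$$
where $\Psi(z) := \sqrt{\im m_\MP(z)/(n\eta)} + 1/(n\eta)$.

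Next, eigenvalue interlacing between $\bfR$ and its minors shows that the normalized partial trace of $\bfR^{(i)}$ differs from $m_N(z) := p^{-1} \sum_{\alpha \in \calI_2} \bfR_{\alpha\alpha}(z)$ by at most $O(1/(n\eta))$. Substituting back, the diagonal entries of $\bfR$ are shown to satisfy the same structural relations as the entries of $\bfG(z)$ with $m_\MP$ replaced by $m_N$, up to fluctuations of size $n^\eps \Psi(z)$. Averaging over $\calI_2$ yields a perturbed self-consistent equation of the form
$$1 + z\, m_N(z) + \frac{\xi\, m_N(z)}{1 + m_N(z)} = O\pth{n^\eps \Psi(z)},$$
and invoking the standard stability analysis for this equation (whose unperturbed solution on the upper half plane is $m_\MP$) produces $|m_N(z) - m_\MP(z)| \lesssim n^\eps \Psi(z)$ uniformly on $\calS$. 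Off-diagonal entries are handled via the two-step resolvent identity $\bfR_{ab}(z) = -\bfR_{aa}(z) \sum_{c \neq a} H_{ac} \bfR^{(a)}_{cb}(z)$ combined with a second application of the concentration estimates.

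The main technical obstacle is the bootstrap: one must first establish a weak a priori control on $\max_{a,b}|\bfR_{ab} - \bfG_{ab}|$ by working at large $\eta$, then propagate it downward through $\calS$ via a continuity argument along a fine grid, and finally iteratively sharpen the crude bound into the sharp rate $\Psi(z)$ using the self-consistent equation. The stability of this equation degenerates near the spectral edges $\lambda_\pm$, which is precisely why $\Psi$ is $\eta$-dependent, and also why the domain $\calS$ in the square case $\xi = 1$ is truncated away from the origin (the Marchenko--Pastur density is singular at $x=0$ there); since Theorem \ref{thm:main2} only needs control near the upper edge, this restriction is harmless and the version stated in \cite[Proposition 2.13]{hwang2019local} applies directly.
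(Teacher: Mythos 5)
The paper does not prove Lemma~\ref{lem:Local_law}; it cites it directly from \cite{ding2018necessary} (Lemma~3.11) and \cite{hwang2019local} (Proposition~2.13), which is exactly what your sketch reproduces. Your outline---Schur complement on the linearized resolvent to isolate one row, Hanson--Wright concentration of the quadratic form, interlacing to replace minor traces by $m_N$, the perturbed self-consistent equation and its stability analysis, and the bootstrap from large $\eta$ down through $\calS$---is the standard proof of this local law and is the argument the cited references carry out, so the proposal is correct and matches the paper's (cited) approach.
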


To give a precise characterization of the resolvent, we rely on the following estimates for the Stieltjes transform $ m_\MP(z) $ of the Marchenko-Pasture law. We refer to e.g. \cite{bloemendal2016principal}[Lemma 3.6] for more details.

\begin{lemma}[Estimate for $ m_\MP(z) $]
For $ z =E + \ii \eta $, let $ \kappa(z) := \min(|E-\lambda_-|,|E-\lambda_+|) $ denote the distance to the spectral edge. If $ z \in \calS $, we have
\begin{equation}
|m_\MP(z)| \asymp 1,\ \ \mbox{and}\ \ \ 
\im m_\MP(z) \asymp 
\begin{cases}
\sqrt{\kappa(z) + \eta} & \mbox{if}\ E \in [\lambda_-,\lambda_+],\\
\frac{\eta}{\sqrt{\kappa(z) + \eta}} & \mbox{if}\ E \notin [\lambda_-,\lambda_+]. 
\end{cases}
\end{equation}
\end{lemma}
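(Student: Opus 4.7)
The plan is to work directly with the explicit formula
\[
m_\MP(z) = \frac{1-\xi-z+\sqrt{(z-\lambda_-)(z-\lambda_+)}}{2\xi z},
\]
and reduce both claimed asymptotics to analysis of the numerator. On the spectral domain $\calS$, the real part $E$ lies in a compact interval bounded away from $0$ (by either $\lambda_-/2$ or $1/10$) and $\eta \in (0,3)$, so $|z| \asymp 1$; the factor $(2\xi z)^{-1}$ thus contributes only a multiplicative constant of order one, and it is enough to estimate numerator quantities.

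For $|m_\MP(z)| \asymp 1$, the upper bound is immediate because each term in the numerator is bounded by a universal constant on $\calS$. For the matching lower bound I would use that $m_\MP$ extends continuously to the closure of $\calS$ in the closed upper half-plane and does not vanish there: on the bulk $(\lambda_-, \lambda_+)$ one has $\im m_\MP(E) = \pi \rho_\MP(E) > 0$, while on the exterior pieces of the real axis $m_\MP(E)$ is real and can be computed explicitly at the distinguished points (e.g.\ $m_\MP(\lambda_+) = -[\sqrt{\xi}(1+\sqrt{\xi})]^{-1}$) to be nonzero. A continuity/compactness argument then yields a uniform positive lower bound.

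The heart of the proof is the bound on $\im m_\MP$. Writing $w := (z-\lambda_-)(z-\lambda_+) = u+\ii v$ with
\[
u = (E-\lambda_-)(E-\lambda_+) - \eta^2, \qquad v = \eta(2E-\lambda_--\lambda_+),
\]
the principal-branch identity $\im\sqrt{w} = \mathrm{sign}(v)\sqrt{(\sqrt{u^2+v^2}-u)/2}$ reduces everything to estimating $\sqrt{u^2+v^2}-u$, and I split on the sign of $(E-\lambda_-)(E-\lambda_+)$. In Case A ($E \in [\lambda_-, \lambda_+]$), set $\tau := -(E-\lambda_-)(E-\lambda_+) \asymp \kappa(z)$ (the factor corresponding to the further endpoint being $\asymp 1$ on $\calS$); then $u \le 0$ and $\sqrt{u^2+v^2}-u \asymp |u|+|v| \asymp \kappa(z)+\eta^2+\eta \asymp \kappa(z)+\eta$ (using $\eta$ bounded), giving $\im\sqrt{w} \asymp \sqrt{\kappa(z)+\eta}$. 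In Case B ($E \notin [\lambda_-, \lambda_+]$), set $\tau := (E-\lambda_-)(E-\lambda_+) > 0$ with $\tau \asymp \kappa(z)$; splitting on whether $\eta^2 \le \tau$ (so $u \ge 0$ and $\sqrt{u^2+v^2}-u = v^2/(\sqrt{u^2+v^2}+u) \asymp \eta^2/(\kappa(z)+\eta)$) or $\eta^2 > \tau$ (so $u<0$ and $\sqrt{u^2+v^2}-u \asymp (\eta^2-\tau)+\eta$, which also matches $\eta^2/(\kappa(z)+\eta)$ on $\calS$ since all quantities are $O(1)$), one obtains $\im\sqrt{w} \asymp \eta/\sqrt{\kappa(z)+\eta}$. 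Combined with the bounded contribution $\im(1-\xi-z) = -\eta$ and the branch-cut convention that enforces $\im m_\MP(z) > 0$ for $\im z > 0$, the desired asymptotic follows.

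The main obstacle is Case B, where one must verify that the two subregimes $\eta^2 \le \tau$ and $\eta^2 > \tau$ produce the same unified asymptotic with constants independent of the subregime, and that the $-\eta$ piece coming from $\im(1-\xi-z)$ does not destructively cancel against $\im\sqrt{w}/(2\xi z)$. Both issues are controlled by the branch convention in the explicit formula together with the fact that on the compact domain $\calS$ all relevant quantities are $O(1)$, so the transition between subregimes is smooth and the signs of the imaginary contributions are consistent.
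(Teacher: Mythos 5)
The paper does not supply a proof of this lemma; it simply cites \cite[Lemma~3.6]{bloemendal2016principal}. Your route through the explicit formula is legitimate, and your core computation of $|\im\sqrt{w}|$ via the decomposition $w=u+\ii v$ is correct and gives the advertised asymptotics for $\im\sqrt{w}$.

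However, there is a genuine gap in the transition from $\im\sqrt{w}$ to $\im m_\MP(z)$. You argue that because $|z|\asymp 1$ on $\calS$, the factor $(2\xi z)^{-1}$ ``contributes only a multiplicative constant of order one, and it is enough to estimate numerator quantities.'' That is true for $|m_\MP(z)|$, but it is false for the imaginary part: with $N:=1-\xi-z+\sqrt{w}$ one has
\[
\im m_\MP(z) \;=\; \frac{E\,\im\sqrt{w}\;-\;\eta\bigl(1-\xi+\re\sqrt{w}\bigr)}{2\xi|z|^2},
\]
so division by $z$ mixes real and imaginary parts and produces the extra term $-\eta\bigl(1-\xi+\re\sqrt{w}\bigr)$. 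This correction is $O(\eta)$, and on $\calS$ one only has $\eta\lesssim\sqrt{\kappa+\eta}$ (resp.\ $\eta\lesssim\eta/\sqrt{\kappa+\eta}$), not $\eta\ll\sqrt{\kappa+\eta}$; in fact for, say, $E=\lambda_+$ and $\eta$ of order one, the correction is numerically comparable to the leading term $E\,\im\sqrt{w}$. Hence the upper bound $\im m_\MP\lesssim\sqrt{\kappa+\eta}$ (or $\eta/\sqrt{\kappa+\eta}$) goes through, but the lower bound does not follow from your argument: ``$\im m_\MP>0$'' alone is a qualitative statement and does not rule out the numerator being much smaller than $\im\sqrt{w}$. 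Closing this requires an explicit sign/cancellation argument (e.g.\ controlling $\re\sqrt{w}$ and $\re m_\MP$ near $\lambda_\pm$), or switching to the integral representation $\im m_\MP(z)=\eta\int\rho_\MP(x)/((x-E)^2+\eta^2)\,\dd x$ together with the square-root edge behaviour of $\rho_\MP$, which gives both bounds at once.

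A secondary issue: the identity $\im\sqrt{w}=\mathrm{sign}(v)\sqrt{(\sqrt{u^2+v^2}-u)/2}$ is the principal branch applied to the single complex number $w$, which is \emph{not} the branch that makes the stated formula a Stieltjes transform. With the correct branch (cut on $[\lambda_-,\lambda_+]$, equivalently $\sqrt{z-\lambda_-}\,\sqrt{z-\lambda_+}$ with principal square roots), one has $\im\sqrt{(z-\lambda_-)(z-\lambda_+)}>0$ for all $\im z>0$, including when $E<(\lambda_-+\lambda_+)/2$ where $v<0$. Your $\mathrm{sign}(v)$ would there give $\im\sqrt{w}<0$ and hence the wrong sign for the leading contribution. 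The magnitude you computed is correct in all cases, but the sign matters precisely for the cancellation question raised above, so this should be fixed rather than left to the remark that the convention ``enforces $\im m_\MP>0$.''
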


In the following analysis, we will work with $ z=E+\ii \eta $ satisfying $ |E-\lambda_+| \leq n^{-2/3+\delta} $ and $ \eta=n^{-2/3 - \delta} $, where $ 0<\delta<\frac{1}{3} $ is some parameter. Uniformly in this regime, the local law yields that the following is true with overwhelming probability for all $ \eps>0 $ and some universal constant $ C_0>0 $,
\begin{equation}\label{eq:Local_Law_Edge}
\sup_{z} \max_{a \neq b \in \calI} |\bfR_{ab}(z)| \leq n^{-\frac{1}{3} + \delta + \eps}  ,\ \ \mbox{and}\ \ \sup_{z} \max_{a \in \calI} |\bfR_{aa}(z)| \leq  C_0.
\end{equation}
These estimates will be used repeatedly in the following subsections.

\subsection{Stability of the resolvent}

In this subsection, we will prove the main technical result for the proof of resampling stability. Specifically, we will show that under moderate resampling, the resolvent matrices are stable. Since resolvent is closely related to various spectral statistics, this stability lemma for resolvent will be a key ingredient for our proof.

\begin{lemma}\label{lem:Stability_Resolvent}
Assume $ k \leq n^{5/3-\epsilon_0} $ for some $ \epsilon_0>0 $. There exists $ \delta_0>0 $ such that for all $ 0<\delta<\delta_0 $, uniformly for $ z=E+\ii\eta $ with $ |E-\lambda_+| \leq n^{-2/3+\delta} $ and $ \eta=n^{-2/3-\delta} $, there exists $ c>0 $ such that the following is true with overwhelming probability
\begin{equation}\label{eq:Resolvent_Stability}
\max_{\alpha,\beta} \abs{ \Rk_{\alpha\beta}(z) - \bfR_{\alpha\beta}(z) } \leq \frac{1}{n^{1+c} \eta},\ \ \max_{i,j} \abs{ \Rk_{ij}(z) - \bfR_{ij}(z) } \leq \frac{1}{n^{1+c} \eta}.
\end{equation}
\end{lemma}

\begin{proof}
Recall that $ S_k := \{(i_1,\alpha_1),\dots,(i_k,\alpha_k)\} $ is the random subset of matrix indices whose elements are resampled in the matrix $ \bfX $. For $ 1 \leq t \leq k $, let $ \Xt $ be the matrix obtained from $ \bfX $ by resampling the $ \{(i_s,\alpha_s)\}_{1 \leq s \leq t} $ entries and let $ \calF_t $ be the $ \sigma $-algebra generated by the random variables $ \bfX $, $ S_k $ and $ \{\bfX_{i_s \alpha_s}'\}_{1 \leq s \leq t} $. For $ a,b \in \calI $, we define
$$ T_{ab} := \sth{t:\{i_t,\alpha_t\} \cap \{a,b\} \neq \emptyset  }. $$

Let $ \eps>0 $ be an arbitrarily fixed parameter, and let $ C_0 $ be the constant in \eqref{eq:Local_Law_Edge}. Consider the event $ \calE_t \in \calF_t $ where for all $ z=E+\ii\eta $ with $ |z-\lambda_+| \leq n^{-2/3+\delta} $ and $ \eta=n^{-2/3-\delta} $ we have
$$ \max_{a \neq b} \abs{\Rt_{ab}(z)} \leq  n^{-\frac{1}{3} + \delta + \eps} =: \Psi,\ \ \mbox{and}\ \ \max_{a} \abs{\Rt_{aa}(z)} \leq C_0.  $$

Define $ \Xt_0 $ as the matrix obtained from $ \Xt $ by replacing the $ (i_t,\alpha_t) $ entry with $ 0 $, and also define its symmetrization $ \tXt_0 \in \R^{|\calI| \times |\calI|} $ as in \eqref{eq:Symmetrization}. Note that $ \tXtt_0 $ is $ \calF_t $-measurable. We write
$$ \tXt = \tXtt_0 + \Ptt ,\ \ \  \tXtt = \tXtt_0 + \Qtt, $$
where $ \Pt, \Qt  $ are $ |\calI| \times |\calI| $ symmetric matrices whose elements are all $ 0 $ except at the $ (i_{t},\alpha_{t}) $ and $ (\alpha_{t},i_{t}) $ entries, satisfying
$$ 
(\Pt)_{ab} = \begin{cases}
\bfX_{i_{t}\alpha_{t}} & \mbox{if}\  \sth{a,b}=\{i_{t},\alpha_{t}\},\\
0 & \mbox{otherwise}
\end{cases}
\ \ \ 
(\Qt)_{ab} = \begin{cases}
\bfX_{i_{t}\alpha_{t}}' & \mbox{if}\  \sth{a,b}=\{i_{t},\alpha_{t}\},\\
0 & \mbox{otherwise}
\end{cases}
.
$$

Define the resolvents for the matrices $ \tXt $ and $ \tXt_0 $ as in \eqref{eq:Resolvent}:
\begin{equation*}
\Rt := 
\left(
\begin{matrix}
-\bfI_n & \Xt\\
(\Xt)^\top & -z\bfI_p
\end{matrix}
\right)^{-1}
,\ \ 
\Rt_0 :=
\left(
\begin{matrix}
-\bfI_n & \Xt_0\\
(\Xt_0)^\top & -z\bfI_p
\end{matrix}
\right)^{-1}
.
\end{equation*}
Using first-order resolvent expansion, we obtain
\begin{equation}\label{eq:Res_Exp_1}
\Rtt_0 = \Rt + \Rt \Ptt \Rt + \pth{\Rt \Ptt}^2 \Rtt_0.
\end{equation}
The triangle inequality yields
$$ \abs{\pth{\Rtt_0 - \Rt}_{\alpha\beta}} \leq \abs{ \pth{\Rt \Ptt \Rt}_{\alpha\beta} } + \abs{ \pth{\Rt \Ptt \Rt \Ptt \Rtt_0}_{\alpha\beta} }. $$
Note that $ \Ptt $ has only two non-zero entries,
$$ \pth{\Rt \Ptt \Rt}_{\alpha\beta} = \sum_{\ell_1,\ell_2} \Rt_{\alpha \ell_1} \Ptt_{\ell_1 \ell_2} \Rt_{\ell_2 \beta} = X_{i_{t+1}\alpha_{t+1}} \pth{ \Rt_{\alpha i_{t+1}} \Rt_{\alpha_{t+1} \beta} + \Rt_{\alpha \alpha_{t+1}} \Rt_{i_{t+1} \beta} }  $$
Recall that $ |X_{i_{t+1}\alpha_{t+1}}| \leq n^{-1/2+\eps} $ with overwhelming probability thanks to the sub-exponential decay assumption \eqref{e.Assumption2}. Then on the event $ \calE_t $, we have
$$ \abs{ \pth{\Rt \Ptt \Rt}_{\alpha\beta} } \leq 2C_0 \Psi n^{-\frac{1}{2}+\eps}. $$
Similarly,
$$ \pth{\Rt \Ptt \Rt \Ptt \Rtt_0}_{\alpha\beta} = \sum_{\{m_1,m_2\},\{m_3,m_4\} = \{i_{t+1},\alpha_{t+1}\} } \Rt_{\alpha m_1} \Ptt_{m_1 m_2} \Rt_{m_2 m_3} \Ptt_{m_3 m_4} (\Rtt_0)_{m_4 \beta}. $$
We use the trivial bound $ |\Rtt_0| \leq \eta^{-1} $ for the last term. Then, on the event $ \calE_t $, we have
$$ \abs{ \pth{\Rt \Ptt \Rt \Ptt \Rtt_0}_{\alpha\beta} } \leq 2 n^{-1+2\eps} \eta^{-1} (\Psi^2 + C_0^2) \ll \Psi. $$
Therefore, we have shown that, on the event $ \calE_t $,
\begin{equation}\label{eq:Rtt0_Apriori}
\max_{\alpha \neq \beta} \abs{ (\Rtt_0)_{\alpha\beta} } \leq 2 \Psi,\ \ \max_{\alpha} \abs{ (\Rtt_0)_{\alpha\alpha} } \leq 2 C_0.
\end{equation}
Similarly, using the first-order resolvent expansion for $ \Rtt $ around $ \Rt $, we have
$$ \Rtt = \Rt + \Rt (\Ptt-\Qtt) \Rt + \pth{\Rt (\Ptt-\Qtt)}^2 \Rtt. $$
By the same arguments as above, on the event $ \calE_t $, we can derive
$$ \max_{\alpha \neq \beta} \abs{ \Rtt_{\alpha\beta} } \leq 2 \Psi,\ \ \max_{\alpha} \abs{ \Rtt_{\alpha\alpha} } \leq 2 C_0. $$

Next, we use the resolvent identity (or zeroth-order expansion) for $ \Rtt $ and $ \Rtt_0 $:
$$ \Rtt = \Rtt_0 - \Rtt_0 \Qtt \Rtt.  $$
This leads to
$$ \abs{ \pth{\Rtt - \Rtt_0}_{\alpha\beta} } = \abs{ \sum_{\{\ell_1,\ell_2\}=\{i_{t+1}\alpha_{t+1}\}} (\Rtt_0)_{\alpha \ell_1} \Qtt_{\ell_1 \ell_2} \Rtt_{\ell_2 \beta} } $$
Thus, on the event $ \calE_t $, we conclude
\begin{equation}\label{eq:frakf}
\abs{ \pth{\Rtt - \Rtt_0}_{\alpha\beta} } \leq 4 n^{-\frac{1}{2}+\eps} \pth{ \Psi^2 + C_0\Psi \1_{((t+1) \in T_{\alpha\beta})} } =: \frakf_{\alpha\beta}^{[t+1]}
\end{equation}
Meanwhile, the second-order resolvent expansion of $ \Rtt $ around $ \Rtt_0 $ yields
$$ \Rtt = \Rtt_0 - \Rtt_0 \Qtt \Rtt_0 + \pth{\Rtt_0 \Qtt}^2 \Rtt_0 - \pth{\Rtt_0 \Qtt}^3 \Rtt. $$
A key observation is that $ \Rtt_0 $ is $ \calF_t $-measurable, and $ \EE [\Qtt | \calF_t] =0 $.
For simplicity of notations, we set
$$ \frakq_{\alpha\beta}^{[t]} := \pth{ (\Rt_0 \tbfE^{(i_t,\alpha_t)})^2 \Rt_0 }_{\alpha \beta} $$
where $ \tbfE^{(i_t,\alpha_t)} \in \R^{|\calI| \times |\calI|} $ is the symmetrization of the matrix $ \bfE^{(i_t,\alpha_t)} \in \R^{n \times p} $ whose elements are all $ 0 $ except $ \bfE^{(i_t,\alpha_t)}_{i_t \alpha_t}=1 $.
Then we have
\begin{equation}\label{eq:Rtt_Rtt0}
\abs{ \EE\qth{\Rtt_{\alpha\beta} | \calF_t} - (\Rtt_0)_{\alpha\beta} - p^{-1} \frakq_{\alpha\beta}^{[t+1]} } \leq 32 n^{-\frac{3}{2}+3\eps} \pth{\Psi^2 C_0^2 + C_0^4 \1_{((t+1) \in T_{\alpha\beta})}} =: \frakg_{\alpha \beta}^{[t+1]}.
\end{equation}

Similarly, using resolvent expansion of $ \Rt $ around $ \Rtt_0 $, we obtain
$$ \Rt = \Rtt_0 - \Rtt_0 \Ptt \Rtt_0 + (\Rtt_0 \Ptt)^2 \Rtt_0 - (\Rtt_0 \Ptt)^3 \Rt. $$
By the same arguments as above, on the event $ \calE_t $, we deduce that
\begin{equation}\label{eq:Rt_Rtt0}
\abs{ \Rt_{\alpha\beta} - (\Rtt_0)_{\alpha\beta} + \bfX_{i_{t+1}\alpha_{t+1}} \frakp_{\alpha\beta}^{[t+1]} - \bfX_{i_{t+1}\alpha_{t+1}}^2 \frakq_{\alpha\beta}^{[t+1]} } \leq \frakg_{\alpha\beta}^{[t+1]}
\end{equation}
where
\begin{equation}\label{eq:frakp}
\frakp_{\alpha\beta}^{[t]} := \pth{\Rt_0 \tbfE^{(i_t,\alpha_t)}\Rt_0}_{\alpha\beta}.
\end{equation}
Combining \eqref{eq:Rtt_Rtt0} and \eqref{eq:Rt_Rtt0} yields
\begin{equation}\label{eq:Remainder}
\abs{ \EE \qth{\Rtt_{\alpha\beta} | \calF_t} - \Rt_{\alpha\beta} - \bfX_{i_{t+1}\alpha_{t+1}}\frakp_{\alpha\beta}^{[t+1]} + (\bfX_{i_{t+1}\alpha_{t+1}}^2 - p^{-1}) \frakq_{\alpha\beta}^{[t+1]} } \leq 2 \frakg_{\alpha\beta}^{[t+1]}.
\end{equation}
By a telescopic summation, we obtain
\begin{equation}\label{eq:Telescopic}
\begin{aligned}
\Rk_{\alpha\beta} - \bfR_{\alpha\beta} &= \sum_{t=0}^{k-1} \pth{ \Rtt_{\alpha\beta} - \Rt_{\alpha\beta} }\\
&= \sum_{t=0}^{k-1} \pth{\Rtt_{\alpha\beta} - \EE \qth{\Rtt_{\alpha\beta} | \calF_t}} + \sum_{t=0}^{k-1} \bfX_{i_{t+1}\alpha_{t+1}} \frakp_{\alpha\beta}^{[t+1]}  + \sum_{t=0}^{k-1} (\bfX_{i_{t+1}\alpha_{t+1}}^2 - p^{-1}) \frakq_{\alpha\beta}^{[t+1]}   + \mathfrak{r}_{\alpha\beta} 
\end{aligned}
\end{equation}
where the remainder $ \mathfrak{r}_{\alpha\beta} $ is bounded by \eqref{eq:Remainder}
$$ |\mathfrak{r}_{\alpha\beta}| \leq 2\sum_{t=0}^{k-1} \frakg_{\alpha\beta}^{[t+1]}. $$
Recall the expression of $ \frakg_{\alpha\beta}^{[t]} $, to estimate the remainder, we need to control the size of the set $ T_{\alpha\beta} $.
Note that $ \EE \qth{|T_{\alpha\beta}|} = 2k/p. $
By a Berstein-type inequality (see e.g. \cite{ChatterjeePTRF}[Proposition 1.1]), for any $ x>0 $, we have
$$ \P \pth{|T_{\alpha\beta}| \geq \EE \qth{|T_{\alpha\beta}|} + x} \leq \exp \pth{- \frac{x^2}{4 \EE \qth{|T_{\alpha\beta}|} + 2x}} $$
Recall that $ k \leq n^{5/3-\epsilon_0} $. The inequality together with a union bound implies that 
$$ \max_{\alpha,\beta} |T_{\alpha\beta}| \leq \frac{3 \max(k,p(\log n)^2)}{p} =: \sfT $$ 
with overwhelming probability. We denote this event by $ \calT $. On the event $ \calT $, we have
\begin{equation}\label{eq:Remainder_Bound}
|\mathfrak{r}_{\alpha\beta}| \leq 2k n^{-\frac{3}{2}+3\eps} \Psi^2 C_0^2 + 2 n^{-\frac{3}{2}+3\eps} C_0^4 \sfT \leq n^{3\eps} \sqrt{\sfT} \Psi^2.
\end{equation}

For the first term in \eqref{eq:Telescopic}, we set
$$ \frakw_{\alpha\beta}^{[t+1]} := \pth{\Rtt_{\alpha\beta} - \EE \qth{\Rtt_{\alpha\beta} | \calF_t}} \1_{\calE_t}. $$
Note that $ \calE_t \in \calF_t $. This implies that $ \EE[\frakw_{\alpha\beta}^{[t+1]}|\calF_t]=0 $. Moreover, by \eqref{eq:frakf}, on the event $ \calE_t $ we have $ |\frakw_{\alpha\beta}^{[t+1]}| \leq 2 \frakf_{\alpha\beta}^{[t+1]} $. Further, on the event $ \calT $,
$$ \pth{\sum_{t=0}^{k-1} (\frakf_{\alpha\beta}^{[t+1]})^{2}}^{1/2} \leq n^{-\frac{1}{2}+\eps} \Psi^2 \sqrt{k} + n^{-\frac{1}{2}+\eps} C_0 \Psi \sqrt{\sfT} \leq 2 n^{\eps} \Psi^2 \sqrt{\sfT}. $$
Using the Azuma-Hoeffding inequality, for any $ x \geq 0 $, we have
$$ \P \pth{ \abs{ \sum_{t=0}^{k-1} \frakw_{\alpha\beta}^{[t+1]} } \geq 2 n^{\eps} \Psi^2 \sqrt{\sfT} x } \leq 2 \exp \pth{-\frac{x^2}{2}}. $$
Moreover,
$$ \P \pth{ \abs{\sum_{t=0}^{k-1} \pth{\Rtt_{\alpha\beta} - \EE \qth{\Rtt_{\alpha\beta} | \calF_t} } } \geq 2 n^{\eps} \Psi^2 \sqrt{\sfT} x } \leq \P \pth{ \abs{ \sum_{t=0}^{k-1} \frakw_{\alpha\beta}^{[t+1]} } \geq 2 n^{\eps} \Psi^2 \sqrt{\sfT} x } + \sum_{t=0}^{k-1}\P(\calE_t^c). $$
Recall that $ \calE_t $ holds with overwhelming probability, and consequently $ \sum_{t=0}^{k-1} \P (\calE_t^c) \leq n^{-D} $ for any $ D>0 $. Choosing $ x=n^\eps $ implies that with overwhelming probability
\begin{equation}\label{eq:frakw_bound}
\abs{\sum_{t=0}^{k-1} \pth{\Rtt_{\alpha\beta} - \EE \qth{\Rtt_{\alpha\beta} | \calF_t} } }  \leq 2 n^{2\eps} \Psi^2 \sqrt{\sfT}. 
\end{equation}

For the next two terms in \eqref{eq:Telescopic}, we will deal with them by introducing a backward filtration. Let $ \calF_t' $ be the $ \sigma $-algebra generated by the random variables $ \bfX' $, $ S_k $ and $ \{\bfX_{i\alpha}\} $ with $ i \notin \sth{i_1,\dots,i_t} $ and $ \alpha \notin \sth{\alpha_1,\dots,\alpha_t} $. Similarly as above, we consider the event $ \calE_t' $ that for all $ z=E+\ii\eta $ with $ |z-\lambda_+| \leq n^{-2/3+\delta} $ and $ \eta=n^{-2/3-\delta} $ we have
$$ \max_{a \neq b} \abs{\Rt_{ab}(z)} \leq \Psi,\ \ \mbox{and}\ \ \max_{a} \abs{\Rt_{aa}(z)} \leq C_0. $$
Using resolvent expansion, the same arguments for \eqref{eq:Rtt0_Apriori} yield that, on the event $ \calE_t' $, we have
$$ \max_{\alpha \neq \beta} \abs{(\Rt_0)_{\alpha\beta}} \leq 2 \Psi,\ \ \max_{\alpha} \abs{(\Rt_0)_{\alpha\alpha}} \leq 2C_0. $$
A key observation is that $ \frakp_{\alpha\beta}^{[t]} $ defined in \eqref{eq:frakp} is $ \calF_t' $-measurable. Also, we have $ \EE [\bfX_{i_t \alpha_t} | \calF_t']=0 $. Consider 
$$ \widetilde{\frakp}_{\alpha\beta}^{[t]} := \bfX_{i_t \alpha_t} \frakp_{\alpha\beta}^{[t]} \1_{\calE_t'}. $$
Then we have $ \EE [\widetilde{\frakp}_{\alpha\beta}^{[t]} | \calF_t']=0 $ since we also have $ \calE_t' \in \calF_t' $. Note that
$$ \P \pth{ \abs{ \sum_{t=0}^{k-1} \bfX_{i_{t+1}\alpha_{t+1}} \frakp_{\alpha\beta}^{[t+1]} } \geq x } \leq \P \pth{ \abs{ \sum_{t=0}^{k-1} \widetilde{\frakp}_{\alpha\beta}^{[t+1]} } \geq x } + \sum_{t=0}^{k-1} \P((\calE_{t+1}')^c),  $$
The second term is negligible since $ \calE_t' $ holds with overwhelming probability. To estimate the first term, we use Azuma-Hoeffding inequality as before.
Based on similar arguments as in \eqref{eq:frakf}, we deduce
$$ \abs{\widetilde{\frakp}_{\alpha\beta}^{[t]}} \leq 4 n^{-\frac{1}{2}+\eps} \pth{ \Psi^2 + C_0\Psi \1_{(t \in T_{\alpha\beta})} }. $$
By considering the event $ \calT $ and using Azuma-Hoeffding inequality as in \eqref{eq:frakw_bound}, we can conclude that with overwhelming probability,
$$ \abs{ \sum_{t=0}^{k-1} \widetilde{\frakp}_{\alpha\beta}^{[t+1]} } \leq n^{2\eps} \Psi^2 \sqrt{\sfT} $$
As a consequence, with overwhelming probability
\begin{equation}\label{eq:frakp_bound}
\abs{ \sum_{t=0}^{k-1} \bfX_{i_{t+1}\alpha_{t+1}} \frakp_{\alpha\beta}^{[t+1]} } \lesssim n^{2\eps} \Psi^2 \sqrt{\sfT}. 
\end{equation}
For the third term in \eqref{eq:Telescopic}, by the same arguments, we have
\begin{equation}\label{eq:frakq_bound}
\abs{ \sum_{t=0}^{k-1} (\bfX_{i_{t+1}\alpha_{t+1}}^2 - p^{-1}) \frakq_{\alpha\beta}^{[t+1]} } \lesssim n^{2\eps} \Psi^2 \sqrt{\sfT}. \end{equation}

Finally, combining \eqref{eq:Telescopic}, \eqref{eq:Remainder_Bound}, \eqref{eq:frakw_bound}, \eqref{eq:frakp_bound} and \eqref{eq:frakq_bound}, we have shown that
$$ \abs{ \Rk_{\alpha\beta}(z) - \bfR_{\alpha\beta}(z) } \lesssim n^{3\eps} \Psi^2 \sqrt{\sfT}. $$
Recall that $ \eta=n^{-2/3-\delta} $, $ \Psi = O(n^{-\frac{1}{3}+\delta+\eps}) $, and $ \sfT = O(n^{\frac{2}{3}-\epsilon_0}) $. Then we obtain
\begin{equation}\label{eq:Resolvent_Fixed_z}
n \eta \abs{ \Rk_{\alpha\beta}(z) - \bfR_{\alpha\beta}(z) } \leq n^{-\frac{\epsilon_0}{2} + \delta +5 \eps}.
\end{equation}
Choosing $ \delta + 5\eps< \frac{\epsilon_0}{2} $ yields the desired bound \eqref{eq:Resolvent_Stability} for a fixed $ z $.

So far, we have proved the desired result for a fixed $ z $. To extend this result to a uniform estimate, we simply invoke a standard net argument. To do this, we divide the interval $ [-n^{-2/3+\delta},n^{-2/3+\delta}] $ into $ n^2 $ sub-intervals, and consider $ z=E+\ii\eta $ with $ \kappa(z) $ taking values in each sub-interval. Note that
$$ |\bfR_{\alpha\beta}(z_1) -\bfR_{\alpha\beta}(z_2)| \leq \frac{|z_1 - z_2|}{\min(\im(z_1),\im(z_2))^2}. $$
For $ z_1 $, $ z_2 $ associated with the same sub-interval, we have
$$ n \eta |\bfR_{\alpha\beta}(z_1) -\bfR_{\alpha\beta}(z_2)| \leq n \eta \frac{n^{-2/3+\delta} n^{-2}}{\eta^2} \leq n^{-1+2\delta}, $$
which is of lower order compared with the error bound in \eqref{eq:Resolvent_Fixed_z}. This shows that, up to a small multiplicative factor, the desired error bound \eqref{eq:Resolvent_Stability} holds uniformly in each sub-interval with overwhelming probability.
Finally, thanks to the overwhelming probability, a union bound over the $ n^2 $ sub-intervals yields the desired uniform estimate \eqref{eq:Resolvent_Stability} for all $ z=E+\ii\eta $ with $ |E-\lambda_+| \leq n^{-2/3+\delta} $ and $ \eta=n^{-2/3-\delta} $.

Using the same arguments, we can prove a similar bound for the $ \Rk_{ij} $ and $ \bfR_{ij} $ blocks. Hence, we have shown the desired results.
\end{proof}

\subsection{Stability of the top eigenvalue}

As a consequence of the stability of the resolvent, we also have the stability of the top eigenvalue. This stability of the eigenvalue will play a crucial rule for the resolvent approximation of eigenvector statistics in the next subsection.

\begin{lemma}\label{lem:Stability_Eigenvalue}
Assume $ k \leq n^{5/3-\epsilon_0} $ for some $ \epsilon_0>0 $. Let $ 0<\delta<\delta_0 $ with $ \delta_0 $ as in Lemma \ref{lem:Stability_Resolvent}. For any $ \eps>0 $, with overwhelming probability, we have
$$ \abs{ \lambda - \Lk } \leq n^{-\frac{2}{3}-\delta+\eps}. $$
\end{lemma}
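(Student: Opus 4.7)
The plan is to prove the lemma by contradiction using Lemma~\ref{lem:Stability_Resolvent}, with the key idea that the top eigenvalue $\lambda$ of $\bfH$ is ``detected'' by the imaginary part of the partial trace $\sum_{\alpha \in \calI_2} \bfR_{\alpha\alpha}(z)$ evaluated at $z = \lambda + \ii\eta$, which blows up like $1/\eta$ due to the pole at $\lambda$; if $\Lk$ were far from $\lambda$, the analogous quantity for $\Rk$ at the same $z$ would remain much smaller, so resolvent stability forces $\Lk$ close to $\lambda$.

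Concretely, I would assume toward a contradiction that $\Delta := |\lambda - \Lk| > n^{-2/3-\delta+\eps}$, and WLOG that $\lambda > \Lk$. Set $\eta := n^{-2/3-\delta}$ and $z := \lambda + \ii\eta$. By the spectral representation \eqref{eq:Resolvent_Spectral},
$$\sum_{\alpha \in \calI_2} \bfR_{\alpha\alpha}(z) = \sum_{\ell=1}^p \frac{1}{\lambda_\ell - z}, \qquad \im \sum_\alpha \bfR_{\alpha\alpha}(z) = \sum_\ell \frac{\eta}{(\lambda_\ell - \lambda)^2 + \eta^2} \geq \frac{1}{\eta} = n^{2/3+\delta},$$
where the inequality uses only the $\ell=1$ term. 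For $\Rk$, the $\ell = 1$ contribution is at most $\eta/\Delta^2 \leq n^{2/3+\delta-2\eps}$; invoking rigidity \eqref{eq:Rigidity} for $\bfH^{[k]}$ (which has the same distribution as $\bfH$), the remaining terms satisfy $|\Lk_\ell - \lambda| \gtrsim \ell^{2/3} n^{-2/3}$ for $\ell \geq 2$, giving
$$\sum_{\ell \geq 2} \frac{\eta}{(\Lk_\ell - \lambda)^2 + \eta^2} \lesssim \eta\, n^{4/3} \sum_{\ell \geq 2} \ell^{-4/3} \lesssim n^{2/3-\delta}.$$
Hence $\im \sum_\alpha \Rk_{\alpha\alpha}(z) \leq C\bigl(n^{2/3+\delta-2\eps} + n^{2/3-\delta}\bigr)$, and the two imaginary partial traces differ by at least $(1-o(1))\, n^{2/3+\delta}$.

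On the other hand, Lemma~\ref{lem:Stability_Resolvent} gives $\max_\alpha |\bfR_{\alpha\alpha}(z) - \Rk_{\alpha\alpha}(z)| \leq n^{-1-c}/\eta$ with overwhelming probability, so summing over the $p \lesssim n$ diagonal indices yields
$$\left|\sum_\alpha \bfR_{\alpha\alpha}(z) - \sum_\alpha \Rk_{\alpha\alpha}(z)\right| \leq \frac{p}{n^{1+c}\eta} \leq n^{2/3+\delta-c},$$
which for fixed $c > 0$ contradicts the lower bound once $n$ is large. It suffices to consider $\eps$ smaller than the constant $c$ from Lemma~\ref{lem:Stability_Resolvent}; for $\eps \geq \delta$ the conclusion already follows from Lemma~\ref{lem:Tracy-Widom} by the triangle inequality applied to $|\lambda - \lambda_+|$ and $|\Lk - \lambda_+|$.

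The main subtleties, rather than genuine obstacles, are: (i) $z = \lambda + \ii\eta$ has random real part, so one must invoke the uniform-in-$E$ form of Lemma~\ref{lem:Stability_Resolvent} together with Lemma~\ref{lem:Tracy-Widom} to ensure $|\lambda - \lambda_+| \leq n^{-2/3+\eps'} < n^{-2/3+\delta}$ falls inside the lemma's domain with overwhelming probability; (ii) the bulk estimate for $\Rk$ requires rigidity for the resampled matrix, which is available since $\bfH^{[k]}$ has the same law as $\bfH$; (iii) the case $\lambda < \Lk$ is handled symmetrically by choosing $z = \Lk + \ii\eta$ and swapping the roles. The critical scaling---that the required eigenvalue precision $n^{-2/3-\delta+\eps}$ matches the resolvent-entry precision $n^{-1-c}/\eta$ after summing a $p$-dimensional diagonal---is what makes $k \ll n^{5/3}$ exactly the right regime.
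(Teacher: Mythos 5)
Your approach via the partial trace $\sum_{\alpha} \bfR_{\alpha\alpha}(z) = \Tr(\bfH - z\bfI_p)^{-1}$ is a clean variant of the paper's argument: the paper instead fixes a single pigeonhole-selected index $\alpha$ with $|\v(\alpha)| \geq p^{-1/2}$ and works with $p\eta^{-1}\im\bfR_{\alpha\alpha}$, which requires delocalization for the bulk terms, whereas your trace is a deterministic function of the eigenvalues and sidesteps delocalization at the cost of summing the resolvent-stability bound over all $p$ diagonal indices — the net scaling is identical, and otherwise the contradiction strategy, the use of rigidity, and the appeal to Lemma~\ref{lem:Stability_Resolvent} are the same.

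There is, however, a genuine gap in your bulk estimate. The claim that $|\Lk_\ell - \lambda| \gtrsim \ell^{2/3}n^{-2/3}$ for all $\ell \geq 2$ does not follow from rigidity for small $\ell$: rigidity only gives $\Lk_\ell \leq \gamma_\ell + O(n^{-2/3+\eps'})$ and Tracy--Widom gives $\lambda \geq \lambda_+ - n^{-2/3+\eps'}$, and since $\lambda_+ - \gamma_\ell \asymp (\ell/n)^{2/3}$ the would-be lower bound is swallowed by the $n^{-2/3+\eps'}$ fluctuation once $\ell \lesssim n^{3\eps'/2}$. In particular $\lambda - \Lk_2$ could be as small as $\Delta$, and in the regime $\eps < \delta$ — which you correctly identify as the only one that matters — $\Delta$ is well below $n^{-2/3}$, so the stated inequality genuinely fails there and the bound $\sum_{\ell\ge 2}\eta/((\Lk_\ell-\lambda)^2+\eta^2)\lesssim n^{2/3-\delta}$ is not justified as written. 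The fix is exactly the paper's $n^\omega$ cutoff: for $\ell \geq n^\omega$ (any small $\omega>0$) rigidity does give $\lambda - \Lk_\ell \gtrsim (\ell/n)^{2/3}$ and the tail contributes $O(n^{2/3-\delta-\omega/3})$; for $2 \leq \ell < n^\omega$ use only the deterministic monotonicity $\lambda - \Lk_\ell \geq \lambda - \Lk_1 = \Delta$, giving a contribution at most $n^\omega\,\eta/\Delta^2 \leq n^{2/3+\delta-2\eps+\omega}$, which is $o(n^{2/3+\delta})$ once $\omega < 2\eps$. With this repair your argument closes and yields the lemma.
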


\begin{proof}
Without loss of generality, we assume that $ \lambda>\Lk $. Set $ \eta=n^{-2/3-\delta} $. By the spectral representation of the resolvent \eqref{eq:Resolvent_Spectral}, we have
$$ \im \bfR_{\alpha\alpha}(z) = \eta \sum_{\ell=1}^p \frac{|\v_\ell(\alpha)|^2 }{(\lambda_\ell - E)^2 + \eta^2} \geq  \frac{ \eta |\v(\alpha)|^2}{(\lambda-E)^2 + \eta^2} \geq \frac{\eta |\v(\alpha)|^2}{2\pth{\max(|\lambda-E|,\eta)}^2}. $$
By the pigeonhole principle, we know that there exists $ \alpha $ such that $ |\v(\alpha)| \geq p^{-1/2} $. Choosing this $ \alpha $ and $ z=\lambda+\ii\eta $, we obtain
\begin{equation}\label{eq:Im_R_lower}
p \eta^{-1} \im \bfR_{\alpha\alpha}(\lambda+\ii\eta) \geq \frac{1}{2\eta^2}.
\end{equation}

On the other hand, using the spectral representation of resolvent again for $ \Rk $, we have
$$ p \eta^{-1} \im \Rk_{\beta\beta}(z) = \sum_{m=1}^p \frac{p |\vk_m(\beta)|^2}{\big(\Lk_m - \lambda \big)^2+\eta^2}. $$
Pick $ \omega >0 $, we decompose the summation into two parts
$$ J_1 = \sum_{m=1}^{n^\omega} \frac{p |\vk_m(\beta)|^2}{\big(\Lk_m - \lambda \big)^2+\eta^2},\ \ J_2 = \sum_{m=n^\omega + 1}^p \frac{p |\vk_m(\beta)|^2}{\big(\Lk_m - \lambda \big)^2+\eta^2}. $$
Using delocalization of eigenvectors, for any $ \eps>0 $, with overwhelming probability, we have
\begin{equation}\label{eq:J1_old}
J_1 \lesssim \frac{n^{\omega + \eps}}{(\min_{1 \leq m \leq p} |\Lk_m - \lambda|)^2}.
\end{equation}
By the Tracy-Widom limit of the top eigenvalue (Lemma \ref{lem:Tracy-Widom}), for any $ \eps>0 $, with overwhelming probability, we have $ |\lambda - \lambda_+| \leq n^{-2/3+\eps} $. Also, as discussed in \eqref{eq:Eigenvalue_Diff}, the rigidity of eigenvalues yields that for all $ m \geq n^\omega $, with overwhelming probability,
$$ \lambda - \Lk_m \gtrsim m^{2/3} p^{-2/3}. $$
Then using delocalization again, with overwhelming probability, we have
\begin{equation}\label{eq:J2_old}
J_2 \leq \sum_{m=n^\omega + 1}^p \frac{n^\eps}{(\Lk_m - \lambda)^2} \lesssim n^\eps (n^\omega)^{-1/3} n^{4/3}.
\end{equation}
Again, since $ |\Lk-\lambda| \leq 2n^{-2/3+\eps} $, by choosing $ \omega=2\eps $ we have $ J_2 \leq J_1 $. Therefore, by \eqref{eq:J1_old} and \eqref{eq:J2_old}, we have shown that with overwhelming probability
$$ p \eta^{-1} \im \Rk_{\beta\beta}(\lambda+\ii\eta) \lesssim n^{3\eps} \pth{\min_{1 \leq m \leq p} |\Lk_m - \lambda| }^{-2}. $$
Note that the minimum is attained by $ \Lk $. This shows that
$$ n \eta^{-1} \im \Rk_{\alpha\alpha}(\lambda+\ii\eta)  \lesssim  n^{3\eps} |\Lk - \lambda|^{-2}. $$
Using Lemma \ref{lem:Stability_Resolvent} and \eqref{eq:Im_R_lower}, we have
\begin{equation*}
n\eta^{-1} \im \Rk_{\alpha\alpha}(\lambda+\ii\eta)
\geq n \eta^{-1} \pth{ \im \bfR_{\alpha\alpha}(\lambda+\ii\eta) - \abs{ \im \Rk_{\alpha\alpha}(\lambda+\ii\eta) - \im \bfR_{\alpha\alpha}(\lambda+\ii\eta)  } }
\geq \frac{1}{2\eta^2} - \frac{1}{n^{c}\eta^2} \gtrsim \frac{1}{\eta^2}.
\end{equation*}
Therefore, we have shown that, with overwhelming probability,
$$ \frac{1}{\eta^2} \lesssim n^{3\eps} \frac{1}{|\lambda - \Lk|^2}. $$
Recall $ \eta = n^{-2/3-\delta} $, and we conclude that
$$ \abs{\lambda - \Lk} \leq n^{-2/3 - \delta + 3 \eps}, $$
which proves the desired result thanks to the arbitrariness of $ \eps>0 $.
\end{proof}

\subsection{Proof of Theorem \ref{thm:main2}}

The final ingredient to prove the resampling stability is the following approximation lemma, which asserts that the product of entries in the eigenvector can be well approximated by the resolvent entries.

\begin{lemma}\label{lem:Resolvent_Approx}
Assume that $ k \ll n^{5/3-\epsilon_0} $ for some $ \epsilon_0>0 $. Let $ 0 < \delta <\delta_0 $ be as in Lemma \ref{lem:Stability_Resolvent}. Then, for $ z_0=\lambda+\ii \eta $ with $ \eta=n^{-2/3-\delta} $, there exists $ c'>0 $ such that with probability $ 1-o(1) $ we have
$$ \max_{\alpha,\beta} \abs{ \eta \im \bfR_{\alpha \beta}(z_0) - \v(\alpha) \v(\beta) } \leq n^{-1-c'},\ \ \mbox{and}\ \ \max_{\alpha,\beta} \abs{ \eta \im \Rk_{\alpha \beta}(z_0) - \vk(\alpha) \vk(\beta) } \leq n^{-1-c'}. $$
Similarly, we also have
$$ \max_{i,j} \abs{ \eta \im \frac{\bfR_{ij}(z_0)}{z_0} - \u(i) \u(j) } \leq n^{-1-c'},\ \ \mbox{and}\ \ \max_{i,j} \abs{ \eta \im \frac{\Rk_{ij}(z_0)}{z_0} - \uk(i) \uk(j) } \leq n^{-1-c'}. $$
\end{lemma}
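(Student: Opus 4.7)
My plan is to expand both resolvents using the spectral formula \eqref{eq:Resolvent_Spectral}, isolate the top-eigenvector contribution, and bound the tail by combining eigenvector delocalization (Lemma~\ref{lem:Delocalization}) with the edge gap (Lemma~\ref{lem:Tracy-Widom}) and bulk rigidity \eqref{eq:Rigidity}. Concretely,
\[
\eta\,\im \bfR_{\alpha\beta}(\lambda+\ii\eta)=\sum_{\ell=1}^{p}\frac{\eta^{2}\,\v_\ell(\alpha)\v_\ell(\beta)}{(\lambda_\ell-\lambda)^{2}+\eta^{2}},
\]
and the $\ell=1$ summand is exactly $\v(\alpha)\v(\beta)$. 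I would split the remaining sum at a threshold $\ell=n^{\omega}$: for $2\le\ell\le n^{\omega}$, the probability-$1-o(1)$ edge gap $\lambda-\lambda_\ell\ge cn^{-2/3}$ combined with $|\v_\ell(\alpha)\v_\ell(\beta)|\le n^{-1+\eps}$ yields a contribution of order $n^{\omega-1-2\delta+\eps}$; for $\ell>n^{\omega}$, the rigidity bound $\lambda-\lambda_\ell\gtrsim \ell^{2/3}n^{-2/3}$ together with $\sum_{\ell>n^{\omega}}\ell^{-4/3}\lesssim n^{-\omega/3}$ gives a contribution of order $n^{-1-2\delta-\omega/3+\eps}$. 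Choosing $\omega$ and $\eps$ small compared to $\delta$ produces the claimed $n^{-1-c'}$ bound with probability $1-o(1)$.

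For the $\Rk$-block, the analogous expansion centred at $\lambda$ (rather than at $\Lk$) produces an $m=1$ term $\vk(\alpha)\vk(\beta)\,\eta^{2}/\bigl(\eta^{2}+(\Lk-\lambda)^{2}\bigr)$, whose deviation from $\vk(\alpha)\vk(\beta)$ is of size $|\vk(\alpha)\vk(\beta)|\cdot(\Lk-\lambda)^{2}/\eta^{2}$. To make this polynomially smaller than $n^{-1}$, I plan to invoke Lemma~\ref{lem:Stability_Eigenvalue} not at the same $\delta$ that fixes $\eta$, but at a parameter $\delta'\in(\delta,\delta_{0})$ strictly larger, obtained by running Lemma~\ref{lem:Stability_Resolvent} at scale $\delta'$; this upgrades the eigenvalue stability to $|\lambda-\Lk|\le n^{-2/3-\delta'+\eps}$, hence $(\Lk-\lambda)^{2}/\eta^{2}\le n^{-2(\delta'-\delta-\eps)}$, which is polynomially small. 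The tail $m\ge2$ is then controlled exactly as in the $\bfR$ case, using that $\Xk$ is equal in distribution to $\bfX$ and therefore inherits all the edge-gap, rigidity, and delocalization estimates.

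The $\u$-statements follow the same template via the representation $\bfR_{ij}(z)/z=\sum_\ell \u_\ell(i)\u_\ell(j)/(\lambda_\ell-z)$ from \eqref{eq:Resolvent_Spectral}; the prefactor $1/z=1/\lambda+O(\eta)$ with $\lambda\asymp 1$ is absorbed into the error, and the tail bounds are identical. The main obstacle I anticipate is precisely the coordination of parameters in the $\Rk$ case: applying Lemma~\ref{lem:Stability_Eigenvalue} naively with the same $\delta$ only gives $|\lambda-\Lk|$ of the same order as $\eta$, which is insufficient for an $n^{-1-c'}$ error, so the proof hinges on running the whole stability chain at a slightly larger scale $\delta'>\delta$. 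Once this is set up correctly, everything else is routine bookkeeping of the delocalization, rigidity, and spectral-gap bounds already collected in Section~\ref{sec:Prelim}.
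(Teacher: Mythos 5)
Your proposal is correct and follows essentially the same route as the paper: spectral expansion of the resolvent, isolation of the $\ell=1$ term, a threshold split at $\ell=n^{\omega}$ controlled by the $O(n^{-2/3})$ spectral gap, delocalization, and bulk rigidity, and---crucially---running the eigenvalue-stability estimate at a strictly stronger scale $\delta'>\delta$ so that $|\lambda-\Lk|\ll\eta$, which is exactly the device the paper uses (the paper states it as $|\lambda-\Lk|\le\eta\, n^{-\delta'}$, obtained from Lemma~\ref{lem:Stability_Eigenvalue}). The only cosmetic difference is that you isolate the $m=1$ term of $\Rk$ expanded at $z=\lambda+\ii\eta$ directly, whereas the paper proves uniform estimates on neighborhoods of both $\lambda$ and $\Lk$ and then intersects; these are equivalent.
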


\begin{proof}
For any $ \eps>0 $, we consider a general $ z=E+\ii\eta $ with $ |E-\lambda_+| \leq n^{-2/3+\eps} $. From the spectral representation of the resolvent \eqref{eq:Resolvent_Spectral}, we have
$$ \im \bfR_{\alpha\beta}(z) = \eta \sum_{\ell=1}^p \frac{\v_\ell(\alpha) \v_\ell(\beta)}{(\lambda_\ell - E)^2 + \eta^2}. $$
Pick some $ \omega>0 $, we decompose the summation on the right-hand side into three parts
$$ \sum_{\ell=1}^p \frac{\v_\ell(\alpha) \v_\ell(\beta)}{(\lambda_\ell - E)^2 + \eta^2} = \frac{\v(\alpha) \v(\beta)}{(\lambda-E)^2 + \eta^2} + J_1 + J_2, $$
where
$$ J_1 = \sum_{\ell=2}^{n^\omega} \frac{\v_\ell(\alpha) \v_\ell(\beta)}{(\lambda_\ell - E)^2 + \eta^2},\ \ \  J_2 = \sum_{\ell=n^\omega + 1}^p \frac{\v_\ell(\alpha) \v_\ell(\beta)}{(\lambda_\ell - E)^2 + \eta^2}. $$
Using the same arguments as in \eqref{eq:J2_old}, for any $ \eps>0 $, with overwhelming probability we have
$$ |J_2| \lesssim n^\eps (n^\omega)^{-1/3} n^{1/3}. $$
For the term $ J_1 $, we consider the following event
$$ \calE := \sth{\lambda_1 -\lambda_2 \geq c_0 n^{-2/3}} \cap \sth{\max_{1 \leq \ell \leq p} \|\v_\ell\|_\infty \leq n^{-1/2+\eps} } \cap \sth{|J_2| \lesssim n^\eps (n^\omega)^{-1/3} n^{4/3}}. $$
For any $ \eps>0 $, we can find an appropriate $ c_0>0 $ such that $ \P(\calE) > 1-\eps/2 $. Then, for $ z=E+\ii\eta $ with $ |\lambda-E| \leq \frac{c_0}{2}n^{-2/3} $, on the event $ \calE $, we have
$$ |J_1| \lesssim n^\eps n^\omega n^{1/3}. $$
Let $ \delta'>0 $ with $ \delta'+\delta<\delta_0 $. On the event $ \calE $, for all $ z=E+\ii\eta $ with $ |\lambda -E| \leq \eta n^{-\delta'} $ and $ \eta = n^{-2/3 - \delta} $, we have
$$ \abs{\v(\alpha)\v(\beta) - \frac{\eta^2 \v(\alpha) \v(\beta)}{(\lambda - E)^2 + \eta^2}} \leq n^{-1+2\eps} \abs{ 1 - \frac{\eta^2}{(\lambda-E)^2 + \eta^2} } \leq n^{-1+2\eps - 2\delta'}. $$
This yields
\begin{align*}
\abs{ \eta \im \bfR_{\alpha\beta}(z) - \v(\alpha) \v(\beta) } &\leq \abs{\v(\alpha)\v(\beta) - \frac{\eta^2 \v(\alpha) \v(\beta)}{(\lambda - E)^2 + \eta^2}} + \eta^2 (|J_1|+|J_2|)\\
&\leq n^{-1+2\eps - 2\delta'} + n^{-1+\eps+\omega-2\delta} + n^{-1+\eps-\frac{\omega}{3} - 2\delta}.
\end{align*}
Choosing $ \omega=\eps < \min(\delta,\delta')/2 $, we obtain
\begin{equation}\label{eq:R_Approx}
\max_{\alpha,\beta} \abs{ \eta \im \bfR_{\alpha\beta}(z) - \v(\alpha) \v(\beta) } \leq n^{-1-\min(\delta,\delta')}.
\end{equation}

Similarly, we can apply the same arguments to $ \Rk $. Consider the event
$$ \calE' := \sth{ \max_{\alpha,\beta} \abs{ \eta \im \Rk_{\alpha\beta}(z) - \vk(\alpha) \vk(\beta) } \leq n^{-1-\min(\delta,\delta')} \ \mbox{for all}\  |\Lk-E| \leq \eta n^{-\delta'} , \eta = n^{-2/3 -\delta} }. $$
By previous arguments, we know $ \P(\calE') > 1-\eps/2 $. This gives us $ \P(\calE \cap \calE') > 1- \eps $. Finally, note that $ \delta+\delta'<\delta_0 $, by Lemma \ref{lem:Stability_Eigenvalue}, with overwhelming probability we have $ |\lambda - \Lk|  \leq n^{-2/3 - \delta - \delta'} = \eta n^{-\delta'} $. This implies that we can choose $ z=\lambda + \ii\eta $ in both \eqref{eq:R_Approx} and $ \calE' $. Thus, we have shown the desired result for $ \v $ and $ \vk $ by choosing $ 0<c'<\min(\delta,\delta') $.

On the other hand, from \eqref{eq:Resolvent_Spectral} we also have
$$ \im \frac{\bfR_{ij}(z)}{z} = \eta \sum_{\ell=1}^n \frac{\u_\ell(i) \u_\ell(j)}{(\lambda_\ell - E)^2 + \eta^2}. $$
Using the same methods as above yields the desired result for $ \u $ and $ \uk $.
\end{proof}

Now we prove the main result Theorem \ref{thm:main2} on the stability of PCA under moderate resampling.

\begin{proof}[Proof of Theorem \ref{thm:main2}]
Let $ z_0=\lambda+\ii\eta $ as in Lemma \ref{lem:Resolvent_Approx}. By Lemma \ref{lem:Stability_Resolvent} and \ref{lem:Resolvent_Approx}, we know that, with probability $ 1-o(1) $, for all $ \alpha,\beta \in \calI_2 $, we have
\begin{align*}
\Big|\v(\alpha)\v(\beta) &- \vk(\alpha)\vk(\beta)\Big|\\
&\leq \abs{ \v(\alpha)\v(\beta) - \eta \im \bfR_{\alpha\beta}(z_0) } + \abs{ \eta \im \bfR_{\alpha\beta}(z_0) - \eta \im \Rk_{\alpha\beta}(z_0) } + \abs{ \eta \im \Rk_{\alpha\beta}(z_0)- \vk(\alpha)\vk(\beta)}\\
&\leq n^{-1-c} + n^{-1-c'} + n^{-1-c}.
\end{align*}
Denote $ c'':=\min(c,c') $, and we have
$$ \max_{\alpha,\beta} \abs{\v(\alpha)\v(\beta) - \vk(\alpha)\vk(\beta)} \lesssim n^{-1-c''}. $$
For any fixed $ \eps>0 $, we consider the event
$$ \calE := \sth{\max_{\alpha,\beta} \abs{\v(\alpha)\v(\beta) - \vk(\alpha)\vk(\beta)} \lesssim n^{-1-c''}} \cap \sth{\|\vk\|_\infty \leq n^{-1/2+\eps}}. $$
Since delocalization of eigenvectors holds with overwhelming probability, we know that $ \P(\calE)=1-o(1) $.

By the pigeonhole principle, there exists $ \alpha $ such that $ |\v(\alpha)|> p^{-1/2} $. We choose the $ \pm $ phases of $ \v $ and $ \vk $ in the way that $ \v(\alpha) $ and $ \vk(\alpha) $ are non-negative. On the event $ \calE $, we obtain
$$ \abs{\v(\alpha) - \vk(\alpha)} = \frac{\abs{(\v(\alpha))^2 - (\vk(\alpha))^2}}{\v(\alpha) + \vk(\alpha)} \lesssim n^{-1/2-c''}. $$
Moreover, for any entry $ \v(\beta) $ and $ \vk(\beta) $, if $ \calE $ holds, the triangle inequality gives us
\begin{align*}
\abs{\v(\beta) - \vk(\beta)} &= \frac{\abs{\v(\alpha)\v(\beta) - \v(\alpha)\vk(\beta)}}{\v(\alpha)}\\
& \leq \frac{\abs{\v(\alpha)\v(\beta) - \vk(\alpha)\vk(\beta)}}{\v(\alpha)} + \frac{|\vk(\beta)|}{\v(\alpha)} |\v(\alpha) - \vk(\alpha)|\\
&\lesssim n^{-1/2 - c''} + n^{-1/2 - c'' +\eps}.
\end{align*}
Choosing $ \eps $ sufficiently small, this implies the desired result \eqref{e.main2}. 

For $ \u $ and $ \uk $, note that
\begin{multline*}
\abs{\u(i)\u(j) - \uk(i)\uk(j)}\\
\leq \abs{ \u(i)\u(j) - \eta \im \frac{\bfR_{ij}(z_0)}{z_0} } + \abs{ \eta \im \frac{\bfR_{ij}(z_0)}{z_0} - \eta \im \frac{\Rk_{ij}(z_0)}{z_0} } + \abs{ \eta \im \frac{\Rk_{ij}(z_0)}{z_0}- \uk(i)\uk(j)}    
\end{multline*}
By Lemma \ref{lem:Stability_Resolvent}, we have
$$  \abs{ \im \frac{\bfR_{ij}(z_0)}{z_0} - \im \frac{\Rk_{ij}(z_0)}{z_0} } \leq \abs{ \frac{\bfR_{ij}(z_0) - \Rk_{ij}(z_0)}{z_0} } \lesssim \abs{ \bfR_{ij}(z_0) - \Rk_{ij}(z_0) } \leq \frac{1}{n^{1+c}\eta}.  $$
As a consequence, we have
$$ \abs{\u(i)\u(j) - \uk(i)\uk(j)} \lesssim n^{-1-c''}. $$
The desired result for $ \u $ and $ \uk $ then follows from the same arguments above for $ \v $ and $ \vk $.
\end{proof}

\section*{Acknowledgment}
The author thanks Yihong Wu and Paul Bourgade for helpful discussions at the early stage of this project. Thanks also to Yiyun He for helpful discussion on differential privacy.

\bibliography{Sensitivity}
\bibliographystyle{alpha}
\end{document}